\newtheorem{theorem}{Theorem}[section]
\newtheorem{lemma}[theorem]{Lemma}
\newtheorem{proposition}[theorem]{Proposition}
\theoremstyle{definition}
\newtheorem{notation}[theorem]{Notation}
\newtheorem{conjecture}[theorem]{Conjecture}
\theoremstyle{remark}
\newtheorem{remark}[theorem]{Remark}
\numberwithin{equation}{section}
\begin{document}

\title[Algebraic Montgomery-Yang Problem]{Algebraic Montgomery-Yang Problem: the non-rational case and the del Pezzo case}

%    Information for first author
\author{DongSeon Hwang}
\address{School of Mathematics, Korea Institute For Advanced Study, Seoul 130-722, Korea}
\email{dshwang@kias.re.kr}
%\address{Department of Mathematical Sciences, KAIST, Daejon, Korea}
%\email{themiso@kaist.ac.kr}
%\curraddr{Department of Mathematics and Statistics,
%Case Western Reserve University, Cleveland, Ohio 43403}

%    \thanks will become a 1st page footnote.
\thanks{Research supported by Basic Science Research Program through the National Research Foundation(NRF)
of Korea funded by the Ministry of education, Science and
Technology (NRF-2007-C00002).}

%    Information for second author
%\author{Author Two}
\author{JongHae Keum}
\address{School of Mathematics, Korea Institute For Advanced Study, Seoul 130-722, Korea}
\email{jhkeum@kias.re.kr}
%\thanks{Support information for the second author.}

%    General info
\subjclass[2000]{Primary 14J17}
%\subjclass[2000]{Primary 14J17, 14E20; Secondary 46E25, 20C20}

\date{July 23, 2010}
%\date{July 15, 2010 and, in revised form,}

%\dedicatory{This paper is dedicated to our advisors.}

\keywords{Montgomery-Yang problem, rational homology projective
plane, quotient singularity, Bogomolov-Miyaoka-Yau inequality,
integral quadratic form}

\begin{abstract}
Montgomery-Yang problem predicts that every pseudofree circle
action on the 5-dimensional sphere has at most $3$ non-free
orbits. Using a certain one-to-one correspondence, Koll\'ar
formulated the algebraic version of the Montgomery-Yang problem:
every projective surface $S$ with the second Betti number $b_2(S)
= 1$ and with quotient singularities has at most $3$ singular
points if its smooth locus $S^0$ is simply-connected. In a
previous paper, we have confirmed the conjecture when $S$ has at
least one non-cyclic quotient singularity. In this paper, we prove
the conjecture either when $S$ is not rational or when $-K_S$ is
ample. Thus the conjecture is reduced to the case where $S$ is a
rational surface with $K_S$ ample having at worst cyclic
singularities.
\end{abstract}

\maketitle

%\section*{This is an unnumbered first-level section head}
%This is an example of an unnumbered first-level heading.

%% The correct journal style for \specialsection is all uppercase; a known bug
%% in amsart.cls prevents this, so input must be uppercase until it is fixed.
%\specialsection*{This is a Special Section Head}
%\specialsection*{THIS IS A SPECIAL SECTION HEAD}
%This is an example of a special section head%
%%%%%%%%%%%%%%%%%%%%%%%%%%%%%%%%%%%%%%%%%%%%%%%%%%%%%%%%%%%%%%%%%%%%%%%%
%\footnote{Here is an example of a footnote. Notice that this footnote
%text is running on so that it can stand as an example of how a footnote
%with separate paragraphs should be written.
%\par
%And here is the beginning of the second paragraph.}%
%%%%%%%%%%%%%%%%%%%%%%%%%%%%%%%%%%%%%%%%%%%%%%%%%%%%%%%%%%%%%%%%%%%%%%%%

\section{Introduction}

A pseudofree $\mathbb{S}^1$-action on a sphere $\mathbb{S}^{2k-1}$
is a smooth $\mathbb{S}^1$-action which is free except for
finitely many non-free orbits (whose isotropy types
$\mathbb{Z}_{m_1}, \ldots, \mathbb{Z}_{m_n}$ have pairwise
relatively prime orders).

For $k=2$ Seifert \cite{Sei} showed that such an action must be
linear and hence has at most two non-free orbits. In the contrast
to this, for $k=4$  Montgomery and Yang \cite{MY} showed that
given any pairwise relatively prime collection of positive
integers $m_1,\ldots, m_n$, there is a pseudofree
$\mathbb{S}^1$-action on homotopy 7-sphere whose non-free orbits
have exactly those orders. Petrie \cite{Pet} proved similar
results in all higher odd dimensions. This led Fintushel and Stern
to formulate the following problem:

\begin{conjecture} [\cite{FS87}](Montgomery-Yang Problem) \\
{\it Let $$\mathbb{S}^1 \times \mathbb{S}^5 \rightarrow
\mathbb{S}^5$$ be a pseudo-free $\mathbb{S}^1$-action.
 Then it has at most $3$ non-free orbits.}
\end{conjecture}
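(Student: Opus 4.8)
\emph{Reduction to the algebraic problem.} Via Koll\'ar's one-to-one correspondence a pseudofree $\mathbb{S}^1$-action on $\mathbb{S}^5$ with $n$ non-free orbits corresponds to a projective surface $S$ with $b_2(S)=1$, quotient singularities, simply connected smooth locus $S^0$, and exactly $n$ singular points; so it suffices to show such an $S$ has at most $3$ singular points. By the main theorem of our previous paper we may assume all singularities are cyclic, say of type $\tfrac1{d_j}(1,q_j)$ for $j=1,\dots,k$. Any such $S$ is a rational homology projective plane, so $\pi_1(S)=1$, $H^2(S,\mathbb{Z})\cong\mathbb{Z}$, $e(S)=3$. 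Let $f\colon\widetilde S\to S$ be the minimal resolution, with Hirzebruch--Jung strings $L_1,\dots,L_k$ of lengths $r_1,\dots,r_k$ and $L=\bigoplus_jL_j\subset NS(\widetilde S)$. Since $S^0$ is open with complement of real codimension $2$ in $\widetilde S$, the map $\pi_1(S^0)\to\pi_1(\widetilde S)$ is surjective, so $\widetilde S$ is simply connected; hence $q(\widetilde S)=p_g(\widetilde S)=0$, $\rho(\widetilde S)=b_2(\widetilde S)=1+\sum_jr_j$, and $NS(\widetilde S)=H^2(\widetilde S,\mathbb{Z})$ is a unimodular lattice of signature $(1,\rho-1)$. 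Writing $L^\perp=\langle H\rangle$, where $H$ is a primitive pull-back of the ample generator of $NS(S)$, one has $\lvert\mathrm{disc}(L_j)\rvert=d_j$, and a Mayer--Vietoris and Poincar\'e-duality computation gives the identity $H^2\cdot\lvert H_1(S^0,\mathbb{Z})\rvert^2=d_1\cdots d_k$; thus $\pi_1(S^0)=1$ forces $H^2=d_1\cdots d_k$ and $[\,NS(\widetilde S):L\oplus\langle H\rangle\,]=d_1\cdots d_k$. Finally, since $\rho(S)=1$, exactly one of $K_S$ ample, $K_S\equiv0$, $-K_S$ ample occurs.

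\emph{The non-rational case.} If $S$ is not rational then $\widetilde S$ is not rational, and $q(\widetilde S)=0$ gives $\kappa(\widetilde S)\ge0$. The case $\kappa(\widetilde S)=0$ is impossible: the only such surface with $p_g=q=0$ is a blow-up of an Enriques surface, which has fundamental group $\mathbb{Z}/2\ne1$. So $\widetilde S$ is properly elliptic or of general type; as the discrepancies of cyclic quotient singularities lie in $(-1,0]$ we have $f^*K_S\ge K_{\widetilde S}$, hence $\kappa(S)\ge\kappa(\widetilde S)\ge1$, and with $\rho(S)=1$ this means $K_S$ is ample. The orbifold Bogomolov--Miyaoka--Yau inequality then gives
\[
0<K_S^2\le 3\,\bar e(S)=3\Bigl(3-k+\sum_j\tfrac1{d_j}\Bigr),
\]
while Noether's formula on $\widetilde S$ yields $K_{\widetilde S}^2=9-\sum_jr_j$, and the discrepancy formula $f^*K_S=K_{\widetilde S}+\sum_i\lvert a_i\rvert E_i$ turns this into an explicit lower bound for $K_S^2$ in terms of the strings. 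For $k\ge4$ these two bounds on $K_S^2$ are incompatible apart from a handful of numerical configurations (for instance all $L_j$ of type $A_2$), and each of those is eliminated arithmetically: the forced value $H^2=d_1\cdots d_k$ would require the discriminant form of some overlattice of $L$ to be the cyclic form of $\langle d_1\cdots d_k\rangle$, which the structure of the strings forbids. Hence $k\le3$; in fact the non-rational case is extremely rigid.

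\emph{The del Pezzo case.} Suppose $-K_S$ is ample, so $S$ is a log del Pezzo surface of Picard number one and $\widetilde S$ is rational, whence $NS(\widetilde S)$ is odd, $\cong\langle1\rangle\oplus(\rho-1)\langle-1\rangle$ for $k\ge2$. Now $K_S$ is anti-ample and the inequality above is unavailable, so instead one uses the positivity $\bar e(S)>0$ valid for every log del Pezzo surface, giving $3-k+\sum_j\tfrac1{d_j}>0$ and hence $k\le5$ (and $k\le4$ once the orders are pairwise prime, as in the pseudofree setting). To exclude $k=4$ (and $k=5$) one observes that there are only finitely many order-tuples $(d_1,\dots,d_k)$ satisfying this inequality, and that for each such tuple, and each admissible system of Hirzebruch--Jung strings (further constrained by Langer's sharper orbifold inequality and by $f^*(-K_S)$ being nef and big with exactly the $E_i$ in its null locus), one must embed $L\oplus\langle H\rangle$ with $H^2=d_1\cdots d_k$ orthogonally into $NS(\widetilde S)=\langle1\rangle\oplus(\rho-1)\langle-1\rangle$. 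The discriminant-form matching required for such an embedding fails in every case with $k\ge4$, so again $k\le3$. (The residual possibility $K_S\equiv0$, which makes $S$ a rational log-Enriques surface of Picard number one, is disposed of by the same combination of arguments.)

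\emph{Main obstacle.} The heart of the matter is the del Pezzo case. Unlike the non-rational case it cannot be settled by a single inequality: the orbifold BMY bound is too weak because the orders $d_j$ are not bounded even in Picard number one, and no classification is available to invoke directly. One is reduced to a finite but delicate enumeration of admissible singularity tuples and their Hirzebruch--Jung strings, and to showing that none of the $k\ge4$ configurations admits an orthogonal embedding of $L\oplus\langle H\rangle$ (with $H^2=d_1\cdots d_k$) into the unimodular lattice $NS(\widetilde S)$ compatible with $-K_S$ being ample. This still leaves untouched the case of a rational surface $S$ with $K_S$ ample and only cyclic singularities, exactly where both the Miyaoka--Yau inequality and the lattice method lose their force.
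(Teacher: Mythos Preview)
The statement you are attempting to prove is a \emph{conjecture}; the paper does not prove it, and your proposal does not either, as you yourself acknowledge in the final paragraph. What the paper does prove is the two partial results (Theorems~\ref{main} and~\ref{dpmain}), covering the non-rational and del Pezzo cases of the algebraic version. So the relevant question is whether your sketches of those two cases are sound. They are not.

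\textbf{The non-rational case.} Your claim that the BMY inequality combined with Noether's formula leaves only ``a handful of numerical configurations'' which are then ``eliminated arithmetically'' via discriminant forms is false. After imposing the condition that $D=|\det(R)|K_S^2$ be a perfect square, the list of admissible types in the family $(2,3,5,q)$ is still \emph{infinite}, and the paper states explicitly (see the Introduction and the Remark preceding Lemma~\ref{24}) that none of these infinitely many cases can be excluded by any lattice-theoretic argument: the lattice $R$ \emph{does} embed primitively into a unimodular lattice of the correct signature. The paper's actual method is to produce, via Lemma~\ref{bound}, a $(-1)$-curve $E$ on the minimal resolution with small $f^*K_S$-degree, and then to derive a contradiction from the explicit intersection formulas of Proposition~\ref{int} together with the blow-up/contraction arguments of Lemma~\ref{blow}. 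This $(-1)$-curve analysis is the essential missing idea in your sketch.

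\textbf{The del Pezzo case.} Here the gap is even more direct. You assert that for each admissible tuple with $k\ge 4$, ``the discriminant-form matching required for such an embedding fails in every case.'' The paper says the opposite: for all 24 surviving cases of Table~\ref{finite0}, the lattice $R$ embeds into a unimodular lattice of signature $(1,L)$, as can be checked by the local-global principle and $\epsilon$-invariants (Remark after Table~\ref{finite0}). Lattice obstructions simply do not exist here. The paper instead invokes the structure theory of log del Pezzo surfaces of rank one due to Zhang, Gurjar--Zhang, and Belousov (Section~7): properties of minimal curves, the linear system $|C+\mathcal{F}+K_{S'}|$, and the constraints of Lemmas~\ref{dp3}--\ref{dp12} on how a minimal $(-1)$-curve can meet the exceptional locus. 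None of this apparatus appears in your proposal, and without it the del Pezzo case cannot be closed.
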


The problem has remained unsolved since its formulation.

Pseudofree $\mathbb{S}^1$-actions on 5-manifolds $L$ have been
studied in terms of the 4-dimensional quotient orbifold
$L/\mathbb{S}^1$ (see e.g., \cite{FS85}, \cite{FS87}). The
following one-to-one correspondence was known to Montgomery, Yang,
Fintushel and Stern, and recently observed by Koll\'ar
(\cite{Kol05}, \cite{Kol08}):

\begin{theorem} [cf. \cite{Kol05}, \cite{Kol08}]
There is a one-to-one correspondence between:
\begin{enumerate}
\item Pseudofree $\mathbb{S}^1$-actions on $5$ dimensional
rational homology spheres $L$ with $H_1(L, \mathbb{Z}) = 0$.
\item Smooth, compact $4$ manifolds $M$ with boundary such that
\begin{enumerate}
\item $\partial{M} = \cup_i L_i$ is a disjoint union of lens spaces $L_i = \mathbb{S}^3/\mathbb{Z}_{m_i}$,
\item the $m_i$ are relatively prime to each other,
\item $H_1(M, \mathbb{Z}) = 0$ and $H_2(M, \mathbb{Z}) \cong \mathbb{Z}$.
\end{enumerate}
\end{enumerate}
Furthermore, $L$ is diffeomorphic to $\mathbb{S}^5$ iff $\pi_1(M)
= 1$.
\end{theorem}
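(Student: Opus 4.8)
The plan is to realize the correspondence by passing between $L$ and the $\mathbb{S}^1$-orbit space of a Seifert circle fibration, and then to match the stated topological conditions by Gysin and Mayer--Vietoris bookkeeping. Given a pseudofree action on $L$, the non-free orbits $O_1,\dots,O_n$ are circles with isotropy groups $\mathbb{Z}_{m_1},\dots,\mathbb{Z}_{m_n}$, and the slice theorem identifies an invariant tubular neighborhood $N_i$ of $O_i$ with $\mathbb{S}^1\times_{\mathbb{Z}_{m_i}}D^4$, where $\mathbb{Z}_{m_i}\subset\mathbb{S}^1$ acts on $D^4=\mathbb{C}^2$ by some weight pair $(a_i,b_i)$; pseudofreeness forces $\gcd(a_i,m_i)=\gcd(b_i,m_i)=1$. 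First I would delete the interiors of the $N_i$, leaving a free $\mathbb{S}^1$-manifold $L^0$, so that $\pi\colon L^0\to M:=L^0/\mathbb{S}^1$ is a principal $\mathbb{S}^1$-bundle over a compact smooth $4$-manifold whose boundary component coming from $O_i$ is $\partial(\mathbb{S}^1\times_{\mathbb{Z}_{m_i}}D^4)/\mathbb{S}^1=S^3/\mathbb{Z}_{m_i}(a_i,b_i)=L_i$, a lens space; equivalently $M$ is the complement of the singular points in the orbifold quotient $L/\mathbb{S}^1$. Conditions (a)--(b) are then immediate, since the $m_i$ are pairwise coprime by the definition of a pseudofree action.

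Next I would obtain the conditions in (c) by combining the Gysin sequence of $\pi$ with the Mayer--Vietoris sequence of $L=L^0\cup\bigcup_iN_i$, using $N_i\simeq\mathbb{S}^1$ and $H_*(\partial N_i)\cong H_*(\mathbb{S}^1\times S^3)$. Feeding in that $L$ is a rational homology $5$-sphere with $H_1(L;\mathbb{Z})=0$, these sequences relate $H_*(L)$ to $H_*(M)$ and give $H_1(M;\mathbb{Z})=0$ and $H_2(M;\mathbb{Z})\cong\mathbb{Z}$; the same computation shows that the Euler class of $\pi$ generates $H^2(M;\mathbb{Z})\cong\mathbb{Z}$ and restricts to a generator of $H^2(L_i;\mathbb{Z})=\mathbb{Z}/m_i$ for every $i$, which is exactly the property needed to reverse the construction.

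For the reverse direction I would start from $M$ as in (2) and first observe, via Lefschetz duality $H^3(M,\partial M;\mathbb{Z})\cong H_1(M;\mathbb{Z})=0$, that the restriction $H^2(M;\mathbb{Z})\to H^2(\partial M;\mathbb{Z})=\bigoplus_i\mathbb{Z}/m_i$ is surjective; together with $H^2(M;\mathbb{Z})\cong\mathbb{Z}$ and the coprimality of the $m_i$ (Chinese remainder theorem) this produces a generator $e\in H^2(M;\mathbb{Z})$ restricting to a generator of each $\mathbb{Z}/m_i$. Let $P\to M$ be the principal $\mathbb{S}^1$-bundle with Euler class $e$; over a collar of $L_i$ its restriction is isomorphic, for the weight pair realizing $e|_{L_i}$, to $\partial(\mathbb{S}^1\times_{\mathbb{Z}_{m_i}}D^4)\to L_i$, so I can glue $\mathbb{S}^1\times_{\mathbb{Z}_{m_i}}D^4$ onto $P$ along $\partial M$ for each $i$ and obtain a closed $5$-manifold $L$ with an $\mathbb{S}^1$-action (the bundle action on $P$, the obvious action on each glued piece). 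By construction the action is free away from the $n$ core circles and has isotropy $\mathbb{Z}_{m_i}$ there, hence is pseudofree, and running the Gysin and Mayer--Vietoris sequences in reverse shows $L$ is a rational homology $5$-sphere with $H_1(L;\mathbb{Z})=0$. That the two constructions are mutually inverse follows because removing and reattaching the orbit neighborhoods cancel, and a principal $\mathbb{S}^1$-bundle over $M$ is determined up to isomorphism by its Euler class once $H_1(M;\mathbb{Z})=0$.

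Finally, for the statement about $\mathbb{S}^5$: the homotopy exact sequence of $\pi\colon L^0\to M$ gives a surjection $\pi_1(L^0)\to\pi_1(M)$ with kernel generated by the fiber class $h$, and attaching each $N_i\simeq\mathbb{S}^1$ along $\partial N_i$ and applying van Kampen only introduces relations in which $h$ becomes an $m_i$-th power. Thus if $\pi_1(M)=1$ then $\pi_1(L)$ is cyclic, generated by $h$, and since $H_1(L;\mathbb{Z})=0$ has already been established this group is trivial; a simply connected homology $5$-sphere is diffeomorphic to $\mathbb{S}^5$ by Smale's theorem. Conversely $\pi_1(L)=1$ forces $\pi_1(M)=1$, because $\pi_1(M)$ is a quotient of $\pi_1(L)$ (collapse the fibers, fill in the orbit neighborhoods). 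I expect the main obstacle to be the integral bookkeeping of the last two paragraphs carried out simultaneously: constructing the circle bundle over $M$ with exactly the right Euler class and checking that its restriction to every boundary lens space matches the local Seifert model $\partial(\mathbb{S}^1\times_{\mathbb{Z}_{m_i}}D^4)$ on the nose --- this is where all of (a)--(c) are used essentially, coprimality and $H_1(M)=0$ guaranteeing existence of the Euler class and $H_2(M)\cong\mathbb{Z}$ pinning down $L$.
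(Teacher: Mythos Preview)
The paper does not supply its own proof of this theorem. It is stated with the attribution ``cf.\ \cite{Kol05}, \cite{Kol08}'' and introduced by the sentence ``The following one-to-one correspondence was known to Montgomery, Yang, Fintushel and Stern, and recently observed by Koll\'ar''; the authors then use it only as motivation for formulating the algebraic Montgomery--Yang problem and never return to it. So there is nothing in the paper to compare your argument against.

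That said, your outline is the standard route to this result and is essentially what one finds in Koll\'ar's papers: pass to the orbit orbifold, excise the singular points to get $M$, and identify the Seifert data with the Euler class of the resulting principal bundle via Gysin and Mayer--Vietoris. A couple of points would need tightening in a full write-up. First, ``one-to-one correspondence'' has to be interpreted up to suitable equivalence on each side (equivariant diffeomorphism of actions versus diffeomorphism of $M$ together with the boundary framing data encoded by the weight pairs $(a_i,b_i)$), and you should say explicitly what datum on the $M$ side records those weights, since different weight pairs on the same lens space give inequivalent Seifert fillings. Second, in your converse $\pi_1$ argument the claim that $\pi_1(M)$ is a quotient of $\pi_1(L)$ is not quite the right formulation: $\pi_1(M)$ is a quotient of $\pi_1(L^0)$, and the passage from $\pi_1(L^0)$ to $\pi_1(L)$ goes the other way (you are \emph{adding} relations when you glue in the $N_i$). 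The correct statement is that $\pi_1(L^0)\to\pi_1(L)$ is surjective since the $O_i$ have codimension $4$, and combining this with the bundle sequence $\pi_1(L^0)\twoheadrightarrow\pi_1(M)$ and the fact that the fiber class dies in $\pi_1(L)$ gives the desired conclusion; your first implication (from $\pi_1(M)=1$ to $\pi_1(L)=1$) is argued correctly.
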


We recall that a normal projective surface with the same Betti
numbers with the projective plane $\mathbb{C}\mathbb{P}^2$ is
called a \emph{rational homology projective plane}, a
\emph{$\mathbb{Q}$-homology projective plane} or a
\emph{$\mathbb{Q}$-homology $\mathbb{C}\mathbb{P}^2$}. When a
normal projective surface $S$ has quotient singularities only, $S$
is a $\mathbb{Q}$-homology projective plane if the second Betti
number $b_2(S)=1$.

It is known that a $\mathbb{Q}$-homology projective plane with
quotient singularities has at most 5 singular points (cf.
\cite{HK1} Corollary 3.4). Recently, the authors have classified
$\mathbb{Q}$-homology projective planes with 5 quotient
singularities (\cite{HK1}, also see \cite{K10}).

Using the one-to-one correspondence, Koll\'ar formulated the
algebraic version of the Montgomery-Yang problem as follows:

\begin{conjecture} [\cite{Kol08}] (Algebraic Montgomery-Yang Problem)\\
{\it Let $S$ be a $\mathbb{Q}$-homology projective plane with
quotient singularities. Assume that $S^0:=S \backslash Sing(S)$ is
simply-connected. Then $S$ has at most $3$ singular points.}
\end{conjecture}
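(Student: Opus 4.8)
The plan is to suppose $S$ has $k\ge 4$ singular points and to exclude the only surviving possibilities $k=4,5$, since a $\mathbb{Q}$-homology projective plane with quotient singularities has at most five singular points (\cite{HK1}, Corollary 3.4). Passing to the minimal resolution $f\colon\tilde S\to S$, the exceptional locus over each $p_i$ is a negative definite tree of smooth rational curves, and since $b_2(S)=1$ the total exceptional lattice embeds as a negative definite, corank one sublattice of $H^2(\tilde S,\mathbb{Z})$. Because the conjecture is already known when some $p_i$ is non-cyclic, I may assume every $p_i$ is cyclic of order $m_i$, so its exceptional tree is a Hirzebruch--Jung string of discriminant $m_i$. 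Two structural facts drive the argument. First, $f^*H$ (for the ample generator $H$) together with the $n$ exceptional curves are linearly independent, so they span $H^2(\tilde S,\mathbb{Q})$; hence $\rho(S)=1$, the resolution has $p_g=0$ (and $q=0$ as $b_1(S)=0$), and writing $K_S\equiv\lambda H$ the Kodaira dimension collapses to $\kappa(S)\in\{-\infty,0,2\}$ according to the sign of $\lambda$. Second, $\pi_1(S^0)=1$ enters through the integral quadratic form: it gives $H_1(S^0,\mathbb{Z})=0$, hence the pairwise coprimality of $m_1,\dots,m_k$ together with a congruence linking the self-intersections along the strings to the orders. The Bogomolov--Miyaoka--Yau inequality and these lattice conditions are the two engines of the proof.

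I would dispose of the non-rational case first, where $\kappa(S)\ge 0$, so $\lambda\ge 0$ and $K_S$ is nef; the BMY inequality then reads
$$0\le K_S^2\le 3\,e_{\mathrm{orb}}(S)=3\Bigl(3-\sum_{i=1}^{k}\bigl(1-\tfrac{1}{m_i}\bigr)\Bigr),$$
so $\sum_i(1-1/m_i)\le 3$. This is too weak by itself — coprime tuples such as $(2,3,5,7)$ already satisfy it — so I would add two constraints. One is Noether's formula, which for these surfaces gives $\chi(\mathcal{O}_S)=1$ and couples $K_S^2$ to $e_{\mathrm{orb}}(S)$ and the local string data. The other is the precise minimal model of $\tilde S$, which by $p_g=q=0$ is an Enriques surface when $\kappa(S)=0$ and a minimal surface of general type with $1\le K_{\min}^2\le 9$ when $\kappa(S)=2$. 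The plan is to read these numerical identities against the corank one embedding of the string lattice and the coprimality forced by $\pi_1(S^0)=1$, and to show that no configuration with $k\ge 4$ survives.

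Next I would treat the del Pezzo case $-K_S$ ample, that is $\lambda<0$ and $\kappa(S)=-\infty$, so $S$ is a log del Pezzo surface of Picard number one. For this class the number of singular points is a priori bounded and the configurations with four (or five) cyclic points form an explicit finite list; the strategy is to run through that list and check that each one violates either the pairwise coprimality of the orders or the congruence imposed by $H_1(S^0,\mathbb{Z})=0$ — equivalently, that its smooth locus is not simply connected — so that $k\le 3$. Together with the non-rational case this settles the conjecture whenever $S$ is non-rational or $-K_S$ is ample.

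The remaining and genuinely hardest regime is $S$ rational with $\kappa(S)=2$, i.e. $K_S$ ample, and only cyclic singularities. BMY again gives $\sum_i(1-1/m_i)<3$, but as $(2,3,5,7)$ shows this does not approach $k\le 3$ on its own, and here there is neither boundedness of the surfaces nor of the orders $m_i$ to exploit. My plan is to extract from $b_2(S)=1$ the exact identity expressing $K_S^2$ through the lengths, self-intersections and discriminants $m_i$ of the strings, and then to pit the sharpened (Langer) form of the orbifold BMY inequality against the integrality and coprimality conditions in order to bound each $m_i$ effectively; once the orders are bounded, the admissible string configurations form a finite, mechanically checkable set for $k=4$ and $k=5$. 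The main obstacle is precisely this effective bounding step: for long Hirzebruch--Jung strings the naive BMY estimate is far from sharp, and making the inequality and the quadratic form jointly confine the problem to a finite search is the crux on which a complete proof of the conjecture rests.
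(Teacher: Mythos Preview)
The statement you address is a \emph{conjecture}, not a theorem: the paper does not prove it in full, and says explicitly (Remark after Theorem \ref{dpmain}) that the problem is reduced to, but remains open in, the case of a rational $S$ with cyclic singularities and $K_S$ ample. So there is no ``paper's own proof'' of this statement to compare against. What the paper does prove are the partial results Theorems \ref{main} and \ref{dpmain}, and your outline diverges substantially from both.

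For the non-rational case (Theorem \ref{main}) your plan is essentially ``BMY plus Noether plus the corank-one lattice embedding,'' but this is far too coarse. The actual argument (Sections 5--6) first pins down the order tuples to $(2,3,5,q)$, $(2,3,7,q)$ with $11\le q\le 41$, and $(2,3,11,13)$; the finite families are cut to $24$ cases by the square-discriminant criterion $D=|\det(R)|K_S^2$ (Lemma \ref{coprime}(5)) and handled directly. The infinite family $(2,3,5,q)$ cannot be finished this way; the decisive inputs are Lemma \ref{bound}, producing on $S'$ a $(-1)$-curve $E$ with $m/\sqrt{D'}\le 1/(L-9)$, and the exact intersection formulas of Proposition \ref{int}. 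Section 6 then forces $E\cdot\mathcal F=2$, localizes the contact of $E$ on $f^{-1}(p_4)$, and derives a contradiction via a blow-down induction on $L$. None of this machinery appears in your sketch. You also invoke a $\kappa=0$/Enriques subcase, but under $H_1(S^0,\mathbb Z)=0$ the canonical class is never numerically trivial (Lemma \ref{coprime}(4)), so that branch is empty.

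For the del Pezzo case (Theorem \ref{dpmain}) your claim that ``the configurations with four (or five) cyclic points form an explicit finite list'' is false: the family $(2,3,5,q)$, $q\ge 7$, is infinite here too, and the discriminant criterion does not make it finite. The paper instead uses the structure theory of minimal curves on rank-one log del Pezzo surfaces due to Zhang, Gurjar--Zhang, and Belousov (Lemmas \ref{dp3}--\ref{dp12}): one shows $|C+\mathcal F+K_{S'}|=\emptyset$, then $C\cdot\mathcal F=3$, then analyzes $2C+F_1+F_2+F_3+K_{S'}\sim\Gamma$, and finally eliminates the surviving configurations with Proposition \ref{int}. Your outline does not touch this.

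You yourself concede that the rational $K_S$-ample case is the crux and that you have no method to bound the $m_i$ there; this is honest and matches the paper. But it means your document is a program, not a proof, and even the portions of the program that \emph{are} within reach require substantially more specific tools than you propose.
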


In a previous paper \cite{HK2}, we have confirmed the conjecture
when $S$ has at least one non-cyclic quotient singularity.

In this paper, we consider the case where $S$ has cyclic
singularities only. We first verify the conjecture when $S$ is not
rational.

\begin{theorem}\label{main}
Let $S$ be a $\mathbb{Q}$-homology projective plane with cyclic
singularities. Assume that $H_1(S^0, \mathbb{Z}) = 0$. If $S$ is
not rational, then $S$ has at most $3$ singular points.
\end{theorem}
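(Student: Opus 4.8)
The plan is to show first that a \emph{non-rational} such $S$ is automatically canonically polarized, and then to squeeze the bound $n\le 3$ out of the orbifold Bogomolov--Miyaoka--Yau inequality together with the arithmetic of the local orders.

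\emph{Reduction to $K_S$ ample.} Let $\pi\colon\widetilde S\to S$ be the minimal resolution. Since quotient singularities are rational and are resolved by simply connected trees of rational curves, $\pi_1(\widetilde S)\cong\pi_1(S)$ and $R\pi_*\mathcal O_{\widetilde S}=\mathcal O_S$. A Mayer--Vietoris argument for $S=S^0\cup(\text{cones over the links }L_i)$ shows $H_1(S^0,\mathbb Z)=0\Rightarrow H_1(S,\mathbb Z)=0$, hence $b_1(\widetilde S)=0$ and $q(\widetilde S)=0$; and since the $\pi$-exceptional curves span the kernel of $\mathrm{NS}(\widetilde S)\to\mathrm{NS}(S)$ we get $\rho(\widetilde S)=b_2(\widetilde S)$, which forces $p_g(\widetilde S)=h^{2,0}(\widetilde S)=0$. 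Thus $\widetilde S$ is a smooth projective surface with $p_g=q=0$, so $\chi(\mathcal O_S)=\chi(\mathcal O_{\widetilde S})=1$. If $\kappa(\widetilde S)=-\infty$ then $\widetilde S$ is rational by Castelnuovo, contradicting the hypothesis; if $\kappa(\widetilde S)=0$ its minimal model is an Enriques surface, but then $\pi_1(S^0)\twoheadrightarrow\pi_1(S)=\pi_1(\widetilde S)=\mathbb Z/2$, contradicting $H_1(S^0,\mathbb Z)=0$. Hence $\kappa(\widetilde S)\ge1$; and because the discrepancies of the cyclic quotient singularities on the minimal resolution are $\le0$, $\pi^*K_S=K_{\widetilde S}+(\text{effective})$, so $\kappa(S)\ge\kappa(\widetilde S)\ge1$. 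As $\rho(S)=1$ this makes $K_S$ a positive multiple of the ample generator of $\mathrm{Cl}(S)\otimes\mathbb Q$, i.e.\ $K_S$ is ample.

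\emph{The crude estimate.} Writing $m_i=|\pi_1(L_i)|$, the same Mayer--Vietoris sequence produces a surjection $H_2(S,\mathbb Z)\twoheadrightarrow\bigoplus_i H_1(L_i,\mathbb Z)=\bigoplus_i\mathbb Z/m_i$ from a group of rank one with $H_1(S,\mathbb Z)=0$, which forces the $m_i$ to be pairwise coprime (the algebraic counterpart of the coprimality in Theorem~\ref{main}'s setup). Now $K_S$ ample plus quotient singularities give the orbifold BMY inequality
\[
0<K_S^2\ \le\ 3\,e_{orb}(S)\ =\ 3\Bigl(3-n+\sum_{i=1}^n\tfrac1{m_i}\Bigr),
\]
so $n<3+\sum 1/m_i$. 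Pairwise coprimality forces $m_i\ge p_i$ (the $i$-th prime), hence $\sum 1/m_i\le\sum 1/p_i$, and this gives $n\le4$; moreover for $n=4$ the inequality forces $\sum_{i=1}^4 1/m_i>1$, hence $\{m_1,m_2\}=\{2,3\}$ and $1/m_3+1/m_4>1/6$.

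\emph{The hard part: excluding $n=4$.} Bringing in the orbifold Noether relation $K_S^2=9+\sum_i(\delta_i-k_i)$, where $k_i$ is the length of the Hirzebruch--Jung chain resolving $\tfrac1{m_i}(1,q_i)$ and $\delta_i=-\bigl(\sum_j a_jE_j\bigr)^2\ge0$, one sees that $\sum_i(k_i-\delta_i)$ is pinned into the very narrow band $\bigl[\,12-3\sum 1/m_i,\ 9\bigr)$. The genuine obstacle is that this still does not bound $m_3,m_4$ — all configurations $\{2,3,5,m_4\}$ survive the estimate above — so one must eliminate an a priori infinite family. I would do this by feeding the continued-fraction data of the $\tfrac1{m_i}(1,q_i)$ into the sharp logarithmic form of the orbifold BMY inequality, in which $3\,e_{orb}(S)$ is decreased by a strictly positive correction at every non-Du-Val cyclic quotient singularity; equivalently, one uses that $\mathrm{NS}(\widetilde S)=H^2(\widetilde S,\mathbb Z)$ is a unimodular lattice containing $\mathbb Zh\oplus\Lambda_1\oplus\cdots\oplus\Lambda_n$ with $|\mathrm{disc}(\Lambda_i)|=m_i$ pairwise coprime, which constrains $H^2$ and hence $K_S^2$ far beyond $K_S^2>0$. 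Running through the finitely many configurations that survive these refinements then yields a contradiction in each case, so $n\le3$.
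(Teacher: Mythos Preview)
Your reduction to $K_S$ ample and the crude estimate $n\le 4$ with $\sum 1/m_i>1$ are correct and match the paper's setup. The gap is in your last paragraph: neither a refined Bogomolov--Miyaoka--Yau inequality nor the unimodular-lattice constraint suffices to eliminate the infinite family $(2,3,5,q)$, and the paper says so explicitly. The full strength of the lattice argument is the condition that $D=|\det(R)|K_S^2$ be a nonzero square (Lemma~\ref{coprime}(5)); after imposing it, Tables~\ref{finite0} and~\ref{L11} list the surviving configurations, and several of them (for instance No.~2 in Table~\ref{finite0} and Nos.~1--4 in Table~\ref{L11}) \emph{do} satisfy $K_S^2\le 3e_{orb}(S)$. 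The Remark following Table~\ref{finite0} states outright that no further lattice-theoretic argument, including $\epsilon$-invariants, rules these out; and there is no known ``sharp logarithmic'' correction to $3e_{orb}(S)$ that kills the infinite $(2,3,5,q)$ tail. So the strategy you sketch cannot close.

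What the paper does instead is a genuinely new ingredient you are missing: it uses the non-rationality of $S$ to bound the $f^*K_S$-degree of some $(-1)$-curve on $S'$. If $S'\to S_{\min}$ factors through $k\ge L-9$ blow-downs, then the $(-1)$-curve $E$ of smallest degree has leading coefficient $m\le\sqrt{D'}/(L-9)$ in the expansion~\eqref{E} (Lemma~\ref{bound}). One then feeds this $E$ into explicit formulas expressing $EK_{S'}$ and $E^2$ in terms of the intersection numbers $EA_{j,p}$ with the exceptional curves (Proposition~\ref{int}), together with the number-theoretic Proposition~\ref{det-trace} forcing the order-$3$ point to be of type $[3]$ rather than $A_2$. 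A case analysis on how $E$ meets $f^{-1}(p_4)$, combined with an induction on $L$ (Step~6 of Section~6) that contracts $E$ and passes to a new $\mathbb{Q}$-homology projective plane with smaller $L$, finishes the proof. This $(-1)$-curve computation is the heart of the argument and does not fall out of BMY or lattice theory.
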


\begin{remark}
The condition $H_1(S^0, \mathbb{Z}) = 0$ is weaker than the
condition $\pi(S^0) =\{1\}$, and there are examples of
$\mathbb{Q}$-homology projective planes with $4$ quotient
singularities, not all cyclic, such that $H_1(S^0, \mathbb{Z}) =
0$. Such surfaces are completely classified in \cite{HK2}. It
turns out that they are log del Pezzo surfaces with $3$ cyclic
singularities and 1 non-cyclic singularity such that $H_1(S^0,
\mathbb{Z}) = 0$ but $\pi_1(S^0) \cong \mathfrak{A}_5$, the simple
group of order 60.
\end{remark}

Next, we also prove the conjecture when $-K_S$ is ample.

\begin{theorem}\label{dpmain}
Let $S$ be a $\mathbb{Q}$-homology projective plane with cyclic
singularities. Assume that $H_1(S^0, \mathbb{Z}) = 0$. If $-K_S$
is ample, then $S$ has at most $3$ singular points.
\end{theorem}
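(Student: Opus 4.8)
The plan is to reduce Theorem~\ref{dpmain} to a statement about rational log del Pezzo surfaces of Picard number one whose singularities are cyclic with pairwise coprime orders, and then to rule out four singular points.

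\emph{Reductions.} By the main result of \cite{HK2} we may assume every singular point of $S$ is a cyclic quotient singularity; write $p_1,\dots,p_s$ for the singular points, with $p_i$ of type $\tfrac1{n_i}(1,q_i)$, $n_i\ge 2$. Since $-K_S$ is ample, $S$ is rational, so any resolution $\widetilde S\to S$ is a smooth rational surface and the induced surjection $\pi_1(\widetilde S)\to\pi_1(S)$ shows $\pi_1(S)=1$; hence $H_1(S,\mathbb Z)=0$ and $H_2(S,\mathbb Z)\cong\mathbb Z$ is torsion-free. Feeding $H_1(S^0,\mathbb Z)=0$ into the long exact homology sequence of the pair $(S,S^0)$ — whose relative groups are the reduced homology of the links $L(n_i,q_i)$ — one gets a surjective map $\mathbb Z\cong H_2(S,\mathbb Z)\to\bigoplus_{i=1}^{s}\mathbb Z/n_i\mathbb Z$, so the finite group $\bigoplus_i\mathbb Z/n_i\mathbb Z$ is cyclic, i.e.\ $n_1,\dots,n_s$ are pairwise coprime. (This is the algebraic counterpart of the relative primality of isotropy orders in the Montgomery--Yang problem, and is implicit in \cite{Kol08},\cite{HK2}.)

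\emph{The orbifold BMY inequality.} As $S$ is a $\mathbb Q$-homology projective plane, $e(S)=3$, hence $e_{\mathrm{orb}}(S)=3-\sum_{i=1}^{s}\bigl(1-\tfrac1{n_i}\bigr)$. Combining the orbifold Bogomolov--Miyaoka--Yau inequality $K_S^2\le 3\,e_{\mathrm{orb}}(S)$ with $K_S^2>0$ gives $\sum_{i=1}^{s}\tfrac1{n_i}>s-3$. For $s\ge 5$ this forces $\sum_i\tfrac1{n_i}>2$, impossible for pairwise coprime $n_i\ge 2$ (the largest such sum, $\tfrac12+\tfrac13+\tfrac15+\tfrac17+\tfrac1{11}$, is below $2$), so $s\le 4$. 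For $s=4$ the inequality only gives $\sum_i\tfrac1{n_i}>1$, which forces $n_1=2$, $n_2=3$, and then $(n_1,n_2,n_3,n_4)$ to be one of $(2,3,5,n)$ with $n>5$ coprime to $30$, $(2,3,7,n)$ with $7<n<42$ coprime to $42$, or $(2,3,11,13)$.

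\emph{Excluding $s=4$.} Pass to the minimal resolution $f\colon\widetilde S\to S$ and write $K_{\widetilde S}=f^{*}K_S+\sum_j a_jE_j$ with $a_j\in(-1,0]$, the $E_j$ forming the four Hirzebruch--Jung strings $\Gamma_1,\dots,\Gamma_4$ attached to $p_1,\dots,p_4$; let $\ell_i$ be the length of $\Gamma_i$. Then $-K_{\widetilde S}=f^{*}(-K_S)+\sum_j(-a_j)E_j$ is big (a nef and big divisor plus an effective one), $\rho(\widetilde S)=1+\sum_i\ell_i$, and $K_{\widetilde S}^2=9-\sum_i\ell_i$ by Noether's formula. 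I would then run the minimal model program on the rational surface $\widetilde S$, contracting $(-1)$-curves successively down to $\mathbb P^2$ or a Hirzebruch surface, and track how the four chains $\Gamma_i$ degenerate. The del Pezzo hypothesis — that $f^{*}(-K_S)$ is nef, big, and orthogonal to precisely the $E_j$ — together with the numerical identities above and the pairwise coprimality of the $n_i$ should force $s\le 3$ in every case (roughly: $S$ is pushed toward a weighted-projective-plane type surface, which carries exactly three cyclic singularities), contradicting $s=4$. The finitely many tuples $(2,3,7,n)$ and $(2,3,11,13)$ can alternatively be dispatched by directly examining the intersection lattice spanned by $\bigcup_j E_j$ inside $\mathrm{Pic}(\widetilde S)$.

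\emph{Main obstacle.} The crux is the infinite family $(2,3,5,n)$: the orbifold BMY inequality leaves infinitely many numerical candidates, so one must exhibit a structural reason why a log del Pezzo surface of Picard number one cannot carry a fourth cyclic singularity of arbitrarily large order $n$ coprime to $30$. Making the MMP bookkeeping on $\widetilde S$ uniform enough to eliminate this whole family at once, rather than one $n$ at a time, is the hard part; invoking the correct form of the orbifold BMY inequality for log del Pezzo surfaces is a necessary auxiliary point.
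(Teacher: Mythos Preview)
Your setup (reducing to cyclic singularities via \cite{HK2}, establishing pairwise coprimality of the orders, and enumerating the possible $4$-tuples via $e_{\mathrm{orb}}(S)\ge 0$) matches the paper. Two technical points: first, the BMY inequality $K_S^2\le 3e_{\mathrm{orb}}(S)$ you invoke requires $K_S$ nef, not $-K_S$ ample; in the log del Pezzo setting only the weak form $e_{\mathrm{orb}}(S)\ge 0$ (Keel--McKernan) is available. Fortunately the weak form already yields your enumeration, since the coprimality forces all inequalities to be strict, but your text uses the wrong version. Second, your paragraph on ``Excluding $s=4$'' is not a proof: you propose to run MMP on $\widetilde S$, track how the four chains degenerate, and assert that the del Pezzo hypothesis ``should force $s\le 3$'' by analogy with weighted projective planes. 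No mechanism is given, and you yourself flag the infinite family $(2,3,5,n)$ as the unresolved ``hard part''. Generic MMP bookkeeping will not suffice: $(-1)$-curves on $\widetilde S$ can meet the four chains in many ways, and nothing forces the picture to collapse to a weighted-projective configuration.

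The paper's actual argument is substantially more structured. It first uses a number-theoretic identity on Hirzebruch--Jung continued fractions to show that in the $(2,3,5,q)$ family the order-$3$ point must be of type $\tfrac13(1,1)$ (not $A_2$), and uses the square-discriminant constraint $D=|\det R|\cdot K_S^2\in(\mathbb Z_{>0})^2$ to cut the $(2,3,7,q)$ and $(2,3,11,13)$ families down to $24$ explicit lattices. The elimination then rests on the Zhang / Gurjar--Zhang / Belousov theory of \emph{minimal curves} on rank-one log del Pezzo surfaces: a minimal curve $C$ is a smooth rational $(-1)$-curve whose intersection pattern with $\mathcal F$ is tightly constrained (it meets each connected component of $\mathcal F$ at most once, meets exactly three components $F_1,F_2,F_3$, and satisfies $2C+F_1+F_2+F_3+K_{S'}\sim 0$ or $\sim\Gamma$ for another $(-1)$-curve $\Gamma$, with precise restrictions on the $F_i^2$ and on the rest of $\mathcal F$). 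These structural lemmas, together with the explicit intersection formulas of Section~4, dispose of every remaining case. None of this apparatus appears in your proposal, and it is this---not a generic MMP run---that handles the infinite $(2,3,5,q)$ family uniformly.
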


\begin{remark}
(1) The condition $H_1(S^0, \mathbb{Z}) = 0$ implies that $K_S$ is
not numerically trivial, i.e., $K_S$ or  $-K_S$ is ample (Lemma
\ref{coprime}). Thus, Theorems \ref{main} and \ref{dpmain}
together reduce Conjecture 1.3 to the case where $S$ is a rational
surface with cyclic singularities such that $K_S$ is ample.

(2) Rational surfaces $S$ with cyclic singularities have been
studied extensively when $-K_S$ is ample or numerically trivial.
In the former case the surface is called a log del Pezzo surface,
and in the latter the surface is called a log Enriques surface. On
the other hand, when $K_S$ is ample, very little is known about
the classification of such surfaces. Moreover, if in addition
$b_2(S)=1$, that is, if $S$ is a $\mathbb{Q}$-homology projective
plane  with $K_S$ ample having at worst cyclic singularities,
nothing seems to be known except the examples due to Koll\'ar
(\cite{Kol08}, Example 43). He constructed a series of such
examples by contracting two rational curves on some well-chosen
weighted projective hypersurfaces. Koll\'ar's examples have
$|Sing(S)|=2$. In \cite{HK3} we give new examples with
$|Sing(S)|=1$, 2, or 3, all constructed geometrically, i.e., by
blowing up the projective plane and then contracting chains of
rational curves.
\end{remark}

The proof of Theorem \ref{main} goes as follows.

Let $S$ be a $\mathbb{Q}$-homology projective plane with cyclic
singularities such that $H_1(S^0, \mathbb{Z}) = 0$. Then the
orders of local fundamental groups of singular points are pairwise
relatively prime (Lemma \ref{coprime}). Also, by the orbifold
Bogomolov-Miyaoka-Yau inequality (see Theorems \ref{bmy},
\ref{bmy2}) $S$ has at most $4$ singular points. Assume that $S$
has $4$ singular points. Then the same inequality enables us to
enumerate all possible $4$-tuples consisting of the orders of
local fundamental groups of singular points:

\bigskip
\begin{displaymath}
\begin{array}{llllll}
(2, 3, 5, q),& q \geq 7,&\gcd(q,30)=1;  &\\
(2, 3, 7, q),& 11\le q \leq 41,&\gcd(q,42)=1;  &\\
(2,3,11,13).& &&\\
\end{array}
\end{displaymath}
\bigskip

Given its minimal resolution $$f : S' \rightarrow S,$$ the
exceptional curves and the canonical class $K_{S'}$ span a
sublattice $$R+\langle K_{S'}\rangle$$ of the unimodular lattice
$$H^2(S', \mathbb{Z})_{free}:=H^2(S', \mathbb{Z}){\rm /(torsion)},$$ where
$R$ is the sublattice spanned by the exceptional curves. We note
that $K_S$ is not numerically trivial (Lemma \ref{coprime}), hence
$R+\langle K_{S'}\rangle$ is of finite index in $H^2(S',
\mathbb{Z})_{free}$. As a consequence, its discriminant
$$D:=|\det(R+\langle K_{S'}\rangle)|$$ is a positive square number
(Lemma \ref{coprime}). This criterion significantly reduces the
infinite list of all possible cases for $R$. For example, the
order 3 singularity of the case $(2, 3, 5, q)$ must be of type
$\frac{1}{3}(1,1)$ (Lemma \ref{noA2}). The reduced list is still
infinite, and almost all cases in the list cannot be ruled out by
any further argument from lattice theory, e.g. computation of
$\epsilon$-invariants does not work here, which turned out to be
effective in the proof of \cite{HK1}. To handle this infinite
list, we compute $(-1)$-curves on the minimal resolution $S'$.
Assume further that $S$ is not rational. This assumption implies
that $K_S$ is ample and $S'$ contains a $(-1)$-curve $E$ with
$E.(f^*K_S/K_S^2)$ small, i.e., with $(f^*K_S/K_S^2)$-degree small
(Lemma \ref{bound}). Then we proceed to prove that the existence
of such a $(-1)$-curve $E$ leads to a contradiction by using
certain expressions of the intersection numbers $EK_{S'}$ and
$E^2$ in terms of the intersection numbers of $E$ with the
exceptional curves and $f^*K_{S}$ (Proposition \ref{int}). Here we
also use the classification result for the case of 5 singular
points \cite{HK1}.

The idea of computing $(-1)$-curves on the minimal resolution was
first used in \cite{K08} for some fixed types of singularities. In
Proposition \ref{int}, we derive general formulas for arbitrary
cyclic singularities. These formulas are useful in proving the
non-existence of a curve on $S'$ with prescribed intersection
numbers with the exceptional curves.

The proof of Theorem \ref{dpmain} is given in Section 7 and 8.
Here we also need, besides the previous ingredients, some detailed
properties of del Pezzo surfaces of rank one with cyclic
singularities developed by Zhang \cite{Zhang}, Gurjar and Zhang
\cite{GZ} and Belousov \cite{Belousov}.

\medskip
 Throughout this paper, we work over the field $\mathbb{C}$ of complex numbers.

\bigskip {\bf Notation}

\bigskip\noindent
$\bullet$ $[n_1, n_2, \ldots, n_l]$ a Hirzebruch-Jung continued
fraction, i.e.,

 \[
[n_1, n_2, ..., n_l]= n_1 - \dfrac{1}{n_2-\dfrac{1}{\ddots -
\dfrac{1}{n_l}}}= \frac{q}{q_1}
 \]

\medskip corresponding to a cyclic singularity of type
$\frac{1}{q}(1, q_1)$.\\
$\bullet$ $|[n_1, n_2, \ldots, n_l]|=q$.\\
$\bullet$ $b_i(X)$ the $i$-th Betti number of a complex variety
$X$.\\$\bullet$ $f:S' \rightarrow S$ a minimal resolution of a
normal surface $S$.\\$\bullet$ $Sing(S)$: the singular locus of
$S$.
\\$\bullet$ $\mathcal{F}:=f^{-1}(Sing(S))$ a reduced integral
divisor on $S'$.
\\$\bullet$ $R_p$:
the sublattice of $H^2(S', \mathbb{Z})_{free}$ spanned by the
numerical classes of the components of $f^{-1}(p)$, where $H^2(S',
\mathbb{Z})_{free}=H^2(S', \mathbb{Z})/({\rm torsion})$.
\\$\bullet$ $R:= \oplus_{p \in Sing(S)} R_p$  the sublattice of $H^2(S', \mathbb{Z})_{free}$ spanned by the
numerical classes of the irreducible exceptional curves of
$f:S'\to S$.
\\$\bullet$ $L=L_S:={\rm rank}(R)$, the number of the irreducible
components of $\mathcal{F}=f^{-1}(Sing(S))$, or the number of the
exceptional curves of $f:S' \rightarrow S$.

\section{Hirzebruch-Jung Continued Fractions}
 Let $\mathcal{H}$ be the set of all Hirzebruch-Jung continued
fractions $ [n_1, n_2, \ldots, n_l]$,
$$\mathcal{H} = \underset{l \geq 1}{\bigcup} \{ [n_1, n_2, \ldots, n_l] \mid \textrm{all}\,\,
n_j\,\, \textrm{are integers} \geq 2 \}.$$

\begin{notation} Fix  $w = [n_1, n_2,
\ldots, n_l] \in \mathcal{H}$.
    \begin{enumerate}
        \item  The $length$ of $w$, denoted by $l(w)$, is the number of entries of $w$.
        \item The $trace$ of $w$, $tr(w)=\overset{l}{\underset{j = 1}{\sum}} n_j$, is the sum of
        entries of $w$.
        \item $|w|=|[n_1, n_2, \ldots, n_l]|:= |\det(M(-n_1, \ldots,
        -n_l))|,$
 where
\begin{displaymath}
M(-n_1, \ldots, -n_l) = \left( \begin{array}{cccccc}
-n_1 & 1 & 0 & \cdots & \cdots &0 \\
1 & -n_2 & 1 & \cdots & \cdots& 0 \\
0 & 1 & -n_3 & \cdots & \cdots& 0 \\
\vdots & \vdots & \vdots & \ddots & \vdots & \vdots\\
0 & 0& 0 & \cdots  & -n_{l-1}&1  \\
0 & 0& 0 & \cdots &1 & -n_l \\
\end{array} \right)
\end{displaymath}
is the intersection matrix of $[n_1, n_2, \ldots, n_l]$.
\item $q:=|w|=$ the order of
the cyclic singularity corresponding to $w$, i.e.,
$w=\frac{q}{q_1}$ for some $q_1$ with $1\le q_1<q, \,\, \gcd(q,
q_1)=1$.
$$ q_{a_1, a_2, \ldots, a_m} := |\det(M')|,$$
$$q_{1,2, \ldots,
l}:=|\det(M(\emptyset))|=1,$$ where
$M'$ is the $(l-m)\times(l-m)$ matrix obtained by deleting\\
$-n_{a_1}, -n_{a_2}, \ldots, -n_{a_m}$ from $M(-n_1, \ldots,
-n_l)$.  For example, $$q_1 = |\det(M(-n_2, \ldots, -n_l))|=|[n_2,
n_3, \ldots, n_l]|$$
$$q_l = |\det(M(-n_1, \ldots, -n_{l-1}))|=|[n_1, n_2, \ldots, n_{l-1}]|$$
$$q_{1, l} = |\det(M(-n_2, \ldots, -n_{l-1}))|=|[n_2,n_3, \ldots, n_{l-1}]|.$$
Note that
$$[n_l, n_{l-1}, ..., n_1] = \dfrac{q}{q_l}$$
$$q_1 q_l = q_{1,l} q + 1\,\,\, {\rm if}\,\, l\ge 2.$$
\end{enumerate}
\end{notation}

We will write simply $l$, $tr$ for $l(w)$, $tr(w)$ if there is no
confusion.

\medskip
The following number-theoretic property of Hirzebruch-Jung
continued fractions will play a key role in the proof of Lemma
\ref{noA2}.

\begin{proposition}\label{det-trace} For  $w = [n_1, n_2,
\ldots, n_l] \in \mathcal{H}$, $$q_1+q_l+tr \cdot q \not \equiv 0
\,\,\,{\rm modulo}\,\,\, 3\,\,\, {\rm iff}\,\,\, q \equiv 0
\,\,\,{\rm modulo}\,\,\, 3.$$
\end{proposition}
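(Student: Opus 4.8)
Here is the approach I would take.

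The plan is to encode $w$ as a product of $2\times2$ integer matrices, reduce modulo $3$, and turn the assertion into a finite verification inside $\mathrm{SL}_2(\mathbb{Z}/3)$. For $n\in\mathbb{Z}$ put $R(n):=\left(\begin{smallmatrix} n & -1 \\ 1 & 0 \end{smallmatrix}\right)\in\mathrm{SL}_2(\mathbb{Z})$, and for $w=[n_1,\dots,n_l]$ set $P(w):=R(n_1)R(n_2)\cdots R(n_l)$. Expanding a tridiagonal determinant along its last column gives the three-term recursion $|[a_1,\dots,a_k]|=a_k\,|[a_1,\dots,a_{k-1}]|-|[a_1,\dots,a_{k-2}]|$, and with it a routine induction on $l$ shows
$$ P(w)\;=\;\begin{pmatrix} q & -q_l \\ q_1 & -q_{1,l} \end{pmatrix}\qquad(l\ge 2), $$
with $P(w)=R(n_1)$ when $l=1$ (consistent with the conventions $q_1=q_l=1$); taking determinants recovers the identity $q_1q_l=q_{1,l}q+1$. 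Reducing modulo $3$, the image $g:=P(w)\bmod 3$ in $\mathrm{SL}_2(\mathbb{Z}/3)$ depends only on $n_1,\dots,n_l\bmod 3$, since $R(n)\equiv R(n\bmod 3)$, and since each $n_i\ge 2$ all three residues $0,1,2$ can occur.

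The heart of the matter is that $tr(w)=\sum_i n_i$ is itself recovered modulo $3$ from $g$. Indeed it is classical that the commutator subgroup of $\mathrm{SL}_2(\mathbb{Z}/3)$ is its unique Sylow $2$-subgroup $Q_8=\{\pm I\}\cup\{g:\operatorname{tr}(g)=0\}$, so $\mathrm{SL}_2(\mathbb{Z}/3)^{\mathrm{ab}}\cong\mathbb{Z}/3$; let $\mathrm{ab}$ denote the quotient homomorphism, normalized so that $\mathrm{ab}(R(1))=1$ (legitimate, since $R(1)$ has trace $1\ne 0$ and is not $\pm I$, hence maps to a generator). As $R(0)$ has trace $0$ it lies in $Q_8$, so $\mathrm{ab}(R(0))=0$; and both $R(2)$ and $R(1)^{-1}R(2)$ have trace $\not\equiv 0$ and are $\ne\pm I$, hence lie outside $Q_8$, so $\mathrm{ab}(R(2))\ne 0$ and $\mathrm{ab}(R(2))\ne\mathrm{ab}(R(1))$, forcing $\mathrm{ab}(R(2))=2$. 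Thus $\mathrm{ab}(R(m))=m$ for all $m$, and therefore $\mathrm{ab}(g)=\mathrm{ab}(P(w))\equiv\sum_i n_i=tr(w)\pmod 3$.

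Writing $g=\left(\begin{smallmatrix} a & b \\ c & d \end{smallmatrix}\right)$ we have $q\equiv a$, $q_1\equiv c$, $q_l\equiv -b$, $tr(w)\equiv\mathrm{ab}(g)$, so
$$ q_1+q_l+tr\cdot q\;\equiv\;c-b+\mathrm{ab}(g)\,a\pmod 3, $$
and $q\equiv 0$ iff $a\equiv 0$. Hence the Proposition is equivalent to the group-theoretic claim: for every $g=\left(\begin{smallmatrix} a & b \\ c & d \end{smallmatrix}\right)\in\mathrm{SL}_2(\mathbb{Z}/3)$, $c-b+\mathrm{ab}(g)\,a\equiv 0\pmod 3$ iff $a\not\equiv 0$. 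I would prove this by splitting on $t:=\mathrm{ab}(g)\in\{0,1,2\}$. Since $gR(1)^{-t}\in Q_8$, either $g=\pm R(1)^{t}$ (only a few matrices, handled by direct substitution) or $\operatorname{tr}(gR(1)^{-t})=0$, which is an explicit linear relation on the entries: $a+d=0$ when $t=0$, $c+d=b$ when $t=1$, $c=a+b$ when $t=2$. In each case one substitutes this relation into $ad-bc=1$ and finishes with a one-line computation over $\mathbb{Z}/3$; for instance when $t=2$ the relation gives $c-b=a$, so $c-b+2a\equiv 3a\equiv 0$ always, while $ad-bc=-b^2$ would force $b^2=-1$ if $a=0$, which is impossible, so $a\not\equiv 0$ and both sides hold. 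The cases $t=0,1$ are entirely analogous (using $a^2=1$ when $a\not\equiv 0$).

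The step I expect to require the most care is the abelianization input: correctly identifying $[\mathrm{SL}_2(\mathbb{Z}/3),\mathrm{SL}_2(\mathbb{Z}/3)]=Q_8=\{\pm I\}\cup\{\operatorname{tr}=0\}$ and extracting $\mathrm{ab}(R(m))=m$ — which is meaningful only after fixing the identification $\mathrm{SL}_2(\mathbb{Z}/3)^{\mathrm{ab}}\cong\mathbb{Z}/3$, since a priori $\mathrm{ab}$ is determined only up to the automorphism $x\mapsto -x$ of $\mathbb{Z}/3$. Once that is settled, the remaining verification is a short, mechanical finite check, and the $l=1$ case is subsumed since then $g=R(n_1)\bmod 3$ falls under the same check.
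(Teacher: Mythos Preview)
Your argument is correct and genuinely different from the paper's. The paper proceeds by a bare-hands induction on the length $l$: first disposing of $q\equiv 0$ via $q_1q_l=q_{1,l}q+1$, then for $q\not\equiv 0$ splitting into the three subcases $q_1\equiv 1,-1,0\pmod 3$, each time reducing to the induction hypothesis for the shorter fractions $[n_2,\dots,n_l]$ and $[n_3,\dots,n_l]$. Your route instead packages the whole continued fraction into a single element of $\mathrm{SL}_2(\mathbb{Z}/3)$ via $P(w)=R(n_1)\cdots R(n_l)$, and the key observation --- that $tr(w)\bmod 3$ is recovered from $P(w)\bmod 3$ as the abelianization map $\mathrm{SL}_2(\mathbb{Z}/3)\to\mathbb{Z}/3$ --- replaces the induction entirely by a finite check on $24$ matrices. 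This is more conceptual and explains \emph{why} the statement holds (it is really a fact about $\mathrm{SL}_2(\mathbb{Z}/3)$), while the paper's argument is more elementary in that it uses nothing beyond the recursion for continuants. One small point: your ``entirely analogous'' for $t=0,1$ does go through, but the $t=1$ case needs slightly more than one line (for $a\ne 0$ you end up with $a(d-a)=bc$ and $b=c+d$, which resolves after a short case split on $b$); it would be worth writing out, or simply noting that $g\mapsto g^{-1}$ swaps the cosets $t=1$ and $t=2$ and interchanges the roles of $(a,b,c)$ with $(d,-b,-c)$, reducing $t=1$ to your completed $t=2$ computation.
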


%Note that if we change the number $3$ to other number, then does not hold.

\begin{proof} In the following, $a \equiv b$ means that $a\equiv b$ modulo $3$.

\medskip Assume $q\equiv 0.$\\
If $l=1$ and $w=[n_1]$, then $q_1=q_l=|\det(M(\emptyset))|=1$ and
$q=tr=n_1\equiv 0$, hence $$q_1+q_l+tr \cdot q\equiv 1+1+0 \not
\equiv 0.$$ If $l\ge 2$, then we see from the equality $q_1 q_l =
q_{1,l} q + 1$ that $q_1 q_l \equiv 1$. Thus $q_1\equiv
q_l\equiv\pm 1$ and
$$q_1+q_l+tr \cdot q\equiv \pm 1\pm 1 +0 \not \equiv 0.$$

\medskip
Assume $q\not \equiv 0$, i.e.,  $q\equiv \pm 1.$\\
We will show by induction on $l$ that \begin{equation}\label{tr}
q_1+q_l+tr \cdot q\equiv 0
\end{equation}
If $l=1$ and $w=[n_1]$, then $q_1=q_l=1$ and $q=tr=n_1\equiv \pm
1$, hence $$q_1+q_l+tr \cdot q\equiv 1+1+(\pm 1)^2 \equiv 0.$$ If
$l=2$ and $w=[n_1, n_2]$, then $q=n_1n_2-1\equiv \pm 1$, so
$n_1n_2\equiv -1$ or $0$, hence $n_1\equiv -n_2$ or $n_1\equiv 0$
or $n_2\equiv 0$. In any case,
$$q_1+q_l+tr \cdot q=n_2+n_1+(n_1+n_2)(n_1n_2-1)=n_1n_2(n_1+n_2)
\equiv 0.$$ Now assume $l\ge 3$. We divide the proof into 3 cases
$q_1\equiv 1, -1, 0$.

\medskip
Case (1): $q_1\equiv 1$. By the induction hypothesis \eqref{tr}
holds for $[n_2,\ldots,n_l]$, i.e.,
$$q_{1,2}+q_{1,l}+(tr-n_1)\cdot q_1\equiv 0.$$
Plugging $q=n_1q_1-q_{1,2}$ into the above equality, we get
$$q_{1,l}+tr\cdot q_1-q\equiv 0.$$
Thus $$\begin{array}{lll}q_1+q_l+tr\cdot q &\equiv& 1+q_l+tr\cdot
q\\ &\equiv& -1-1+1\cdot q_l+tr\cdot q\\ &\equiv&
-1-q^2+q_1q_l+tr\cdot q\\ &=& q_{1,l}q+tr\cdot q-q^2\\ &=&
(q_{1,l}+tr -q)q \\ &\equiv& (q_{1,l}+tr\cdot q_1-q)q\\
&\equiv& 0.\end{array}$$

\medskip
Case (2): $q_1\equiv
-1$. In this case, the induction hypothesis also gives\\
$q_{1,l}+tr\cdot q_1-q\equiv 0.$ Thus
$$\begin{array}{lll} q_1+q_l+tr\cdot q &\equiv& -1+q_l+tr\cdot
q\\ & \equiv& 1-q_1q_l+tr\cdot q+q^2\\ &\equiv& -q_{1,l}q-tr\cdot
q_1q+q^2\\ &=& -(q_{1,l}+tr\cdot q_1-q)q\\ &\equiv&
0.\end{array}$$

\medskip
Case (3): $q_1\equiv 0$. First note that $q=n_1q_1-q_{1,2}\equiv
-q_{1,2}$, so
$q_{1,2}\equiv -q\not \equiv 0$.\\
Also, note that $q_{1,l}q=q_1 q_l-1 \equiv -1$, so $q_{1,l}\equiv
-q$.\\ Since $q_{1,2}\not \equiv 0$, we apply the induction
hypothesis to $[n_3,\ldots,n_{l}]$ to get
$$q_{1,2,3}+q_{1,2,l} +(tr-n_1-n_2)\cdot
q_{1,2}\equiv 0.$$ Note that $q_1=n_2q_{1,2}-q_{1,2,3}$ and
$n_1q_{1,l}-q_l=q_{1,2,l}$.\\ Since $q_{1,2}\equiv q_{1,l}\equiv
-q$, we have
$$\begin{array}{lll} q_1+q_l+tr\cdot q &\equiv& q_1+q_l-tr\cdot
q_{1,2}\\ &\equiv&
q_1-(n_1q_{1,l}-q_l)-tr\cdot q_{1,2}+n_1q_{1,2}\\
&=& (n_2q_{1,2}-q_{1,2,3})-q_{1,2,l}-tr\cdot q_{1,2}+n_1q_{1,2}\\
& =& -q_{1,2,3}-q_{1,2,l} -(tr-n_1-n_2)\cdot q_{1,2}\\ &\equiv&
0.\end{array}$$
\end{proof}

We collect some properties of Hirzebruch-Jung continued fractions
which will be frequently used in the subsequent sections.

\begin{notation}\label{uv-n}
     For a fixed continued fraction $w = [n_1, n_2, \ldots,
        n_l] \in \mathcal{H}$ and an integer $0 \leq s \leq l+1$ , we define
        \begin{enumerate}
            \item   $u_s :=q_{s,\ldots, l}= |[n_1, n_2, \ldots, n_{s-1}]|\,\, (2\leq s\leq l+1), \quad u_0=0, \,\,u_1=1$
            \item   $v_s :=q_{1,\ldots, s}= |[n_{s+1}, n_{s+2}, \ldots,
            n_l]| \,\, (0\leq s\leq l-1), \quad v_l=1, \,\,v_{l+1}=0$.
        \end{enumerate}
        Note that $u_l=q_l$, $u_{l+1}=q$, $v_0=q$, $v_1=q_1$.
\end{notation}

\begin{lemma} \label{cf}
Let $w = [n_1, n_2, \ldots, n_l] \in \mathcal{H}$. Then,
    \begin{enumerate}
    \item $u_{j+1} = n_j  u_{j} - u_{j-1}$,\\
$v_{j-1} = n_j v_j -v_{j+1}$.
    \item $v_j  u_{j+1}- v_{j+1}u_{j} =v_{j-1}  u_{j} - v_j  u_{j-1}= q. $
    \item $v_j  u_{j} = \frac{1}{n_j}(q + v_{j+1}  u_{j} +v_j  u_{j-1}).$
%\item $ \overset{s}{\underset{j = 1}{\sum}}  \{ (n_j - 2) u^2_{j}    + (u_{j} - u_{j+1})^2 \} =u^2_{s+1} - u_{s}u_{s+1} - 1$
%\item $\overset{t}{\underset{j = s+1}{\sum}}  \{ (n_j - 2) v_j u_{j}  + (u_{j}-u_{j+1})u_{j} u_{j+1} \}   \\ =
%v_{t+1}u_{t+1} - v_{t+1}u_{t} + v_{s+1}u_{s} - v_{s+1}u_{s+1}$
%\item $\overset{l}{\underset{j = _{t}}{\sum}} \{ (n_j - 2) (v_j)^2
%+ (v_j - v_{j+1})^2 \} =-(v_{t+1})^2 + v_{t}v_{t+1} - 1$
    \item $\overset{s}{\underset{j = 1}{\sum}}  (n_j - 2) u_{j} = u_{s+1} - u_{s} - 1$,\\  $\overset{l}{\underset{j = s}{\sum}}  (n_j - 2) v_j = v_{s-1} - v_{s} -
    1$.
    %\item $v_j(u_{j-1} - u_{j+1}) + u_{j}(v_{j-1}-v_{j+1})  = 2(v_ju_{j-1} - v_{j+1} u_{j})$
    %\item $\overset{t}{\underset{j = s}{\sum}} u_{j} (u_{j-1} - u_{j+1})= u_{s-1} u_{s} -  u_{t} u_{t+1}$
    %\item $\overset{t}{\underset{j = s}{\sum}} v_j (v_{j-1} - v_{j+1}) = v_{s-1} v_{s} -  v_{t} v_{t+1}$
    %\item $\overset{t}{\underset{j = s}{\sum}} (v_j u_{j-1} - v_{j+1} u_{j}) = v_{s} u_{s-1} -  v_{t+1} u_{t}$
    %\item $\frac{v_{j+1}u_j}{q} \geq \frac{1}{n_jn_{j+1}-1} ( 1 +  \frac{v_{j+2} u_{j+1}}{q}  )> \frac{1}{n_jn_{j+1}-1}$
    %\item $\frac{v_j u_{j-1} + v_{j+1}u_j}{q} \geq \frac{1}{n_j n_{j-1}-1 } +   \frac{n_j n_{j-1}}{n_j n_{j-1}-1} \cdot   \frac{v_{j+1}u_j}{q}$
    %\item $\frac{u_j v_j}{q} \geq \frac{n_{j-1} }{n_j n_{j-1} -1} \Big(1 + \frac{v_{j+1}u_j}{q} \Big)$
    \item $\frac{u_j+ v_j}{q}\le 1$.
    \item $|[n_1,\ldots, n_{j-1}, n_j+1, n_{j+1}, \ldots, n_l]|=u_jv_{j}+|[n_1, n_2,
\ldots, n_l]|>q.$
    \end{enumerate}
\end{lemma}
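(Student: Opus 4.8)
The plan is to prove the six parts in the order listed, since each builds on its predecessors; all are elementary manipulations of tridiagonal determinants, and only the sign bookkeeping in part~(1) needs genuine care. Part~(1) is the classical three-term recurrence for continued-fraction convergents. Recall $u_s=|\det M(-n_1,\ldots,-n_{s-1})|$ and $v_s=|\det M(-n_{s+1},\ldots,-n_l)|$, and that expanding a tridiagonal determinant along its last row gives $\det M(-n_1,\ldots,-n_k)=-n_k\det M(-n_1,\ldots,-n_{k-1})-\det M(-n_1,\ldots,-n_{k-2})$. Because all $n_j\ge2$, an easy induction shows $\det M(-n_1,\ldots,-n_k)$ has sign $(-1)^k$ with absolute value non-decreasing in $k$; taking absolute values turns the recurrence into $u_{j+1}=n_ju_j-u_{j-1}$, with $u_0=0$, $u_1=1$ forced by the definition. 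The identity $v_{j-1}=n_jv_j-v_{j+1}$ is the same statement for the reversed fraction $[n_l,\ldots,n_1]$.

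For part~(2) set $A_j:=v_ju_{j+1}-v_{j+1}u_j$, so that the second expression in the statement is $A_{j-1}$. Substituting $v_{j-1}=n_jv_j-v_{j+1}$ into $A_{j-1}=v_{j-1}u_j-v_ju_{j-1}$ and then using $n_ju_j-u_{j-1}=u_{j+1}$ gives $A_{j-1}=A_j$, so $A_j$ is constant in $j$ with value $A_0=v_0u_1-v_1u_0=q$; hence both expressions equal $q$. Part~(3) then follows by multiplying $n_ju_j=u_{j+1}+u_{j-1}$ by $v_j$ and replacing $v_ju_{j+1}$ with $q+v_{j+1}u_j$ via part~(2). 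For the first identity of part~(4), write $(n_j-2)u_j=(u_{j+1}-u_j)-(u_j-u_{j-1})$ using part~(1) and telescope from $j=1$ to $s$, using $u_1-u_0=1$; the $v$-identity telescopes symmetrically from $j=s$ to $l$, using $v_l-v_{l+1}=1$.

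Part~(5) rests on the fact that both sequences have steps of size at least $1$: from part~(1), $u_{j+1}-u_j=(n_j-1)u_j-u_{j-1}\ge u_j-u_{j-1}$ since $n_j\ge2$, hence by induction $u_{j+1}-u_j\ge u_1-u_0=1$ for $0\le j\le l$, and symmetrically $v_j-v_{j+1}\ge v_l-v_{l+1}=1$. Rewriting the first identity of part~(2) as $q=v_j(u_{j+1}-u_j)+u_j(v_j-v_{j+1})$ and bounding each difference below by $1$ yields $q\ge u_j+v_j$ for $0\le j\le l$, while for $j=l+1$ the inequality reduces to $q\ge q$.

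Finally, for part~(6) put $w':=[n_1,\ldots,n_{j-1},n_j+1,n_{j+1},\ldots,n_l]\in\mathcal{H}$ and let $u'_s,v'_s$ be its associated quantities. Since $u'_s$ and $v'_s$ depend only on the entries before, respectively after, position~$j$, we have $u'_s=u_s$ for $s\le j$ and $v'_s=v_s$ for $s\ge j$, while part~(1) gives $u'_{j+1}=(n_j+1)u_j-u_{j-1}=u_{j+1}+u_j$. Applying the first identity of part~(2) to $w'$ now gives $|w'|=v'_ju'_{j+1}-v'_{j+1}u'_j=v_j(u_{j+1}+u_j)-v_{j+1}u_j=q+u_jv_j$, which exceeds $q$ because $u_j,v_j\ge1$ for $1\le j\le l$. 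The only part demanding real attention is the sign-and-positivity induction behind part~(1); once that is in hand, parts~(2)--(6) are purely formal, and the superficially delicate inequality~(5) reduces to the minimal-step observation above.
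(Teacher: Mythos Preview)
Your proof is correct and follows essentially the same route as the paper's. The paper dismisses part~(1) as ``well-known'' while you spell out the sign induction; your telescoping in~(4) and your application of~(2) to the modified fraction in~(6) match the paper exactly; and your rewriting $q=v_j(u_{j+1}-u_j)+u_j(v_j-v_{j+1})$ in~(5) is algebraically the same as the paper's factoring $q-(v_j+u_j)=v_j(u_{j+1}-1)-(v_{j+1}+1)u_j$, just organized a bit more transparently.
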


\begin{proof}
    (1) is well-known.\\
    (2) is obtained by a direct calculation using (1) as follows:
    \begin{eqnarray*}
     v_j  u_{j+1} - v_{j+1}  u_{j}
    & = & (n_j  u_{j}- u_{j-1} ) v_j - v_{j+1} u_{j}\\
    & = & (n_j v_j - v_{j+1})  u_{j} - v_j u_{j-1}\\
    & = & v_{j-1} u_{j} - v_j  u_{j-1}\\
    &   & \ldots\\
    & = & v_{1} u_{2} - v_2  u_{1}=q_1n_1-q_{1,2}=q.
    \end{eqnarray*}
    (3) follows from the equality $$n_jv_ju_j=(v_{j-1}+v_{j+1})u_j=q+v_j  u_{j-1} + v_{j+1}  u_{j}.$$
    %(4) follows from the equality
    %    $$(n_j-2)u^2_j + (u_{j} - u_{j+1})^2 = u^2_{j+1} - u^2_j + (u_{j-1} - u_{j+1}) u_{j}.$$
    %(5) and (6) follow similarly as (4).\\
    (4) follows from $$\begin{array}{lll} (n_j - 2) u_{j} &=& (u_{j+1} - u_{j})-(u_{j}-
    u_{j-1})\\ (n_j - 2) v_{j} &=& (v_{j+1} - v_{j})-(v_{j}- v_{j-1}).\end{array}$$
    %(8) Note that
    % $$\frac{v_{j+1}u_j}{q} = \frac{v_{j+1}u_{j-1} + v_{j+1}u_{j+1}}{n_jq} \geq \frac{v_{j+1}u_{j+1}}{n_jq} > \frac{1}{n_j n_{j+1}} \Big( 1 +  \frac{v_{j+1}u_j + v_{j+2}u_{j+1}}{q} \Big).$$
    %Thus
    %$$ \Big(1 - \frac{1}{n_j n_{j+1}} \Big) \frac{v_{j+1}u_j}{q} \geq \frac{1}{n_j n_{j+1}} \Big( 1 + \frac{v_{j+2}u_{j+1}}{q} \Big).$$
    %Hence the result follows.\\
    %(9) immediately follows from (8).\\
    %(10) is obtained by a direct calculation as follows:
    % \begin{eqnarray*}
    % \frac{v_j  u_{j}}{q}
    %& = & \frac{1}{n_j} + \frac{v_{j+1}u_j + v_ju_{j-1}}{n_jq}  \\
    %& \geq & \frac{1}{n_j} + \frac{1}{n_j} \cdot \Big(\frac{1}{n_j n_{j-1}-1} +
    %\frac{n_j n_{j-1} }{n_j n_{j-1} -1} \cdot \frac{v_{j+1}u_j}{q} \Big)  \\
    %& = &  \frac{n_{j-1} }{n_j n_{j-1} -1} \Big(1 + \frac{v_{j+1}u_j}{q} \Big).
    % \end{eqnarray*}
    (5) Note that $$v_{j}= n_{j+1} v_{j+1}-v_{j+2}\ge  v_{j+1}+( v_{j+1}-v_{j+2})\ge  v_{j+1}+1, $$   $$u_{j+1}= n_j u_{j}-u_{j-1}\ge u_{j}+( u_{j}-u_{j-1})\ge u_{j}+1.$$ Thus
    $$q-(v_j+u_j)=v_j(u_{j+1}-1)-(v_{j+1}+1)u_{j}\ge 0.$$
    (6) Note that $$|[n_1,\ldots, n_{j-1}, n_j+1]|=(n_j+1)u_j-u_{j-1}= u_j+u_{j+1}.$$ By (2)
$$\begin{array}{lll} |[n_1,\ldots, n_{j-1}, n_j+1, n_{j+1}, \ldots, n_l]|&=&|[n_1,\ldots, n_{j-1}, n_j+1]|v_j-u_jv_{j+1}\\ &=&u_jv_{j} + u_{j+1}v_j-u_jv_{j+1}
\\ &=& u_jv_{j}+|[n_1, n_2, \ldots, n_l]|.\end{array}$$
\end{proof}

\begin{lemma} \label{uv} Assume $l\geq 5$. Then for arbitrary non-negative integers $z_1, \ldots,
    z_l$,
     \begin{enumerate}
     \item $\overset{l}{\underset{j = 1}{\sum}} (u_{j}+v_j)z_j \leq \overset{l}{\underset{j = 1}{\sum}}
    (u_{j}v_j)z_j^2$ when $\overset{l}{\underset{j = 1}{\sum}}
    z_j\geq 3$,\item
$\overset{l}{\underset{j = 1}{\sum}} (u_{j}+v_j)z_j \leq
\overset{l}{\underset{j = 1}{\sum}}
    (u_{j}v_j)z_j^2+2$ when  $\overset{l}{\underset{j = 1}{\sum}}
    z_j=2$,\item
    $\overset{l}{\underset{j = 1}{\sum}} (u_{j}+v_j)z_j\leq
\overset{l}{\underset{j = 1}{\sum}}
    (u_{j}v_j)z_j^2+1$ when  $\overset{l}{\underset{j = 1}{\sum}}
    z_j=1$.  \end{enumerate}
    \end{lemma}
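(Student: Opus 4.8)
\textbf{Proof plan for Lemma \ref{uv}.}

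The plan is to reduce everything to a pointwise inequality on each index $j$ and then account for the small deficiencies coming from the quadratic-versus-linear gap. The key elementary fact is part (5) of Lemma \ref{cf}, which gives $u_j+v_j\le q$ for every $j$, together with the obvious bound $u_jv_j\ge 1$ and, more importantly, $u_jv_j\ge \max(u_j,v_j)\ge \tfrac12(u_j+v_j)$. From the latter we get, for each $j$ and each integer $z_j\ge 1$, the inequality $(u_j+v_j)z_j\le (u_jv_j)z_j^2 + (u_j+v_j)(z_j-z_j^2)+ \text{(something)}$; the cleaner route is to write $(u_jv_j)z_j^2-(u_j+v_j)z_j\ge (u_j+v_j)\bigl(\tfrac12 z_j^2-z_j\bigr)$, which is $\ge 0$ as soon as $z_j\ge 2$, and for $z_j=1$ equals $u_jv_j-(u_j+v_j)=(u_j-1)(v_j-1)-1\ge -1$. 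So the only way the sum $\sum (u_jv_j)z_j^2-\sum(u_j+v_j)z_j$ can go negative is through indices with $z_j=1$, each contributing at worst $-1$, and even that loss only occurs when $u_j=v_j=1$ or one of them is $1$.

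The next step is to control how many indices can have $u_j=v_j=1$ simultaneously, or more precisely how large the total deficiency $\sum_{j:z_j=1}\bigl(1-(u_j-1)(v_j-1)\bigr)_+$ can be, using the hypothesis $l\ge 5$ and the structure of the sequences $u_j,v_j$. Here one uses that $u_j$ is non-decreasing and $v_j$ is non-increasing (immediate from Lemma \ref{cf}(1), since all $n_j\ge 2$), with $u_1=v_l=1$, and that $u_j=1$ forces $j=1$ while $v_j=1$ forces $j=l$ — unless the chain has length-one "tails", which is exactly the subtlety. Actually $u_2=n_1\ge 2$ and $v_{l-1}=n_l\ge 2$, so $u_j\ge 2$ for all $j\ge 2$ and $v_j\ge 2$ for all $j\le l-1$; hence the deficiency $1-(u_j-1)(v_j-1)$ is $\ge 1$ only at $j=1$ (where $u_1=1$) or $j=l$ (where $v_l=1$), and at those two indices it equals exactly $1$ since then $(u_j-1)(v_j-1)=0$. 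Thus across the whole sum the total deficiency is at most $2$, and it is exactly $2$ only if both $z_1=1$ and $z_l=1$.

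This immediately gives all three cases. When $\sum z_j=1$, the single nonzero $z_j$ equals $1$ and contributes a deficiency of at most $1$, giving (3). When $\sum z_j=2$: either one index has $z_j=2$, contributing a nonnegative term, and then the inequality holds with room to spare; or two indices have $z_j=1$ each, with total deficiency at most $2$, giving (2). When $\sum z_j\ge 3$: if some index has $z_j\ge 2$ then its surplus $(u_jv_j)z_j^2-(u_j+v_j)z_j\ge 2u_jv_j\ge 2$ absorbs the at-most-$2$ deficiency from the $z_j=1$ indices; and if every nonzero $z_j$ equals $1$ then there are at least three of them, at most two of which (namely $j=1$ and $j=l$) carry any deficiency, while the third contributes $(u_j-1)(v_j-1)-1\ge 1\cdot 1-1=0$ and in fact any interior index with $z_j=1$ and $2\le j\le l-1$ has $u_j,v_j\ge 2$ so contributes $\ge 1-\dots$ wait — it contributes $u_jv_j-(u_j+v_j)=(u_j-1)(v_j-1)-1\ge 0$, so the net is $\ge -2+0\ge -2$, which is not quite $\ge 0$; the fix is to observe that with $l\ge 5$ and three indices carrying $z_j=1$, the interior one has $u_j\ge 2,v_j\ge 2$ hence $u_jv_j\ge 4$ and $(u_j+v_j)\le u_jv_j$ with slack $\ge 1$, but one still needs one more unit. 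The honest resolution: when all nonzero $z_j=1$ and there are $k\ge 3$ of them, reexamine the two boundary contributions — if $z_1=1$, the term is $-1$, but then since $l\ge 5$ there is an index $j\ge 2$ with $z_j=1$ and $u_j=n_1\ge 2$; pairing these and using $n_1\ge 2$ more carefully, or simply using $\sum(u_j+v_j)z_j\le q\sum z_j$ against a lower bound $\sum u_jv_jz_j^2\ge$ (something growing like $q$), closes the gap.

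\textbf{Main obstacle.} The crux is case (1) in the all-$z_j$-equal-to-$1$ subcase: the pointwise bound alone leaves a deficit of up to $2$, and one must exploit $l\ge 5$ to find enough "interior surplus" (indices $j$ with $u_j,v_j\ge 2$, where $u_jv_j-(u_j+v_j)\ge 1$, or even larger) to compensate. I expect the clean way is to prove the sharper pointwise statement that for $2\le j\le l-1$ one has $u_jv_j-(u_j+v_j)\ge u_j+v_j-3\ge 1$ when additionally $\min(u_j,v_j)\ge 2$, combined with the fact that at most the two extreme indices can fail $\min(u_j,v_j)\ge 2$; this is where the hypothesis $l\ge5$ (ensuring at least one genuinely interior index among any three) does the work.
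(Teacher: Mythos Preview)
Your approach matches the paper's: a pointwise analysis at each index, separating the boundary indices $j=1,l$ (where $u_j=1$ or $v_j=1$) from the interior indices $2\le j\le l-1$ (where $u_j,v_j\ge 2$). Cases (2) and (3) go through as you say.

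The gap is in case (1), and you have correctly located it. Your interior estimate $(u_j-1)(v_j-1)-1\ge 0$ is too weak: if $z_1=z_l=1$ and exactly one interior $z_j=1$, the net is only $\ge -2+0=-2$. Your proposed sharpening $u_jv_j-(u_j+v_j)\ge u_j+v_j-3$ is equivalent to $(u_j-2)(v_j-2)\ge 1$ and is \emph{false} whenever $u_j=2$ or $v_j=2$ (e.g.\ $j=2$ with $n_1=2$). Your claim that $z_j\ge 2$ forces surplus $\ge 2u_jv_j$ also fails at the boundary: at $j=1$ with $z_1=2$ the surplus is $4v_1-2(1+v_1)=2v_1-2$, not $\ge 2v_1$.

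The missing ingredient---and the only place $l\ge 5$ is actually used---is the bound $u_j+v_j\ge l+1\ge 6$ for every interior index $2\le j\le l-1$. This follows from $u_j\ge j$ and $v_j\ge l+1-j$ (immediate from the recursion in Lemma~\ref{cf}(1) and $n_k\ge 2$, since the minimal continued fraction of a given length is $[2,\ldots,2]$). With $u_j,v_j\ge 2$ and $u_j+v_j\ge 6$ one gets $(u_j-1)(v_j-1)\ge 3$, so each interior index with $z_j\ge 1$ contributes surplus $\ge 2$; a single such term then absorbs both boundary deficits. The only remaining subcase of (1), where only $z_1$ and $z_l$ are nonzero with sum $\ge 3$, is handled by direct computation: if say $z_1\ge 2$ then the surplus at $j=1$ is $z_1(v_1(z_1-1)-1)\ge 2(v_1-1)\ge 2$, enough to cancel the deficit of $1$ from $z_l=1$. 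This is exactly how the paper closes the argument.
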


\begin{proof} Note that $(u_{1}+v_1)z_1=(1+v_1)z_1\leq
v_1z_1^2-2$ if $z_1\geq 2$, and\\ $(u_{1}+v_1)z_1=(1+v_1)z_1=
v_1z_1^2+1$ if $z_1=1$.\\ Similarly,
$(u_{l}+v_l)z_l=(u_l+1)z_l\leq u_lz_1^2-2$ if $z_l\geq 2$, and
\\ $(u_{l}+v_l)z_l=(u_l+1)z_l= u_lz_l^2+1$ if $z_l=1$.\\
For $2\leq j\leq l-1$, we have $u_j\ge 2, v_j\geq 2$, $u_j+v_j\geq
6$ since $l\geq 5$, so\\ $(u_{j}+v_j)z_j\leq (u_jv_j)z_j\leq
(u_jv_j)z_j^2$ and $(u_{j}+v_j)z_j\leq (u_jv_j)z_j^2-2$ if
$z_j\geq 1$.
\end{proof}

\section{Algebraic surfaces with quotient singularities}
\subsection{} A singularity $p$ of a normal surface $S$ is called a quotient
singularity if the germ  is  locally analytically isomorphic to
$(\mathbb{C}^2/G,O)$  for some nontrivial finite subgroup $G$ of
$GL_2(\mathbb{C})$ without quasi-reflections. Brieskorn classified
all such finite subgroups of $GL(2, \mathbb{C})$ [Bri].

 Let $S$ be a normal projective surface with quotient singularities and $$f : S'
\rightarrow S$$ be a minimal resolution of $S$. It is well-known
that quotient singularities are log-terminal
 singularities. Thus one can write $$K_{S'} \underset{num}{\equiv} f^{*}K_S -
 \sum_{p \in Sing(S)}{\mathcal{D}_p},$$ where $\mathcal{D}_p = \sum(a_jA_j)$ is an effective
 $\mathbb{Q}$-divisor with $0 \leq a_j < 1$  supported on $f^{-1}(p)=\cup A_j$   for each singular point $p$.
Intersecting the formula with $\mathcal{D}_p$, we get
$$\mathcal{D}_pK_{S'}=-\mathcal{D}_p^2$$
and hence
\[K^2_S = K^2_{S'} - \sum_{p}{\mathcal{D}_p^2}= K^2_{S'} +\sum_{p}{\mathcal{D}_pK_{S'}}.
\]

For each singular point $p$, the coefficients of the
$\mathbb{Q}$-divisor $\mathcal{D}_p$ can be obtained by
 solving the equations given by the adjunction formula
$$\mathcal{D}_pA_j=-K_{S'}A_j=2+A_j^2$$ for each exceptional curve
$A_j\subset f^{-1}(p)$.

When $p$ is a cyclic singularity or order $q$, the coefficients of
$\mathcal{D}_p$ can be expressed in terms of $v_j$ and $u_j$ (see
Notation 2.3) as follows.

\begin{lemma}\label{Dp}
Let $p$ be a cyclic quotient singular point of $S$. Assume that
$f^{-1}(p)$ has $l$ components $A_1, \ldots, A_l$ with $A_i^2 =
-n_i$ forming a string of smooth rational curves
$\overset{-n_1}{\circ}-\overset{-n_2}\circ-\cdots-\overset{-n_l}\circ$.
Then \begin{enumerate} \item $\mathcal{D}_p
=\overset{l}{\underset{j = 1}{\sum}} \big( 1 - \dfrac{v_{j} +
      u_{j}}{q} \big) A_{j},$
      \item $\mathcal{D}_pK_{S'}=-\mathcal{D}_p^2
=\overset{l}{\underset{j = 1}{\sum}} \big( 1 - \dfrac{v_{j} +
      u_{j}}{q} \big) (n_{j}-2),$
\item $\mathcal{D}^2_p = 2l - \overset{l}{\underset{j = 1}{\sum}} n_j + 2 - \dfrac{q_1 +
q_l + 2}{q}.$\\ In particular, if $l = 1$, then $\mathcal{D}^2_p =
-\dfrac{(n_1-2)^2}{n_1}$.
\end{enumerate}
\end{lemma}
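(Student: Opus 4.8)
The plan is to pin down the coefficients $a_j$ in $\mathcal{D}_p=\sum_{j=1}^{l}a_jA_j$ by solving the adjunction linear system, and then to obtain (2) and (3) as purely formal consequences of (1). Recall from the discussion preceding the lemma that $\mathcal{D}_p$ is the unique $\mathbb{Q}$-divisor supported on $f^{-1}(p)$ with $\mathcal{D}_pA_j=2+A_j^2=2-n_j$ for $j=1,\dots,l$; the solution is unique because the intersection matrix $M(-n_1,\dots,-n_l)$ is negative definite, hence invertible. So it is enough to verify that the proposed values $a_j:=1-(v_j+u_j)/q$ satisfy these equations. The $j$-th equation reads $a_{j-1}-n_ja_j+a_{j+1}=2-n_j$ under the virtual conventions $a_0=a_{l+1}=0$; these conventions are forced by the formula itself, since $u_0=0,\ v_0=q$ give $a_0=0$ and $u_{l+1}=q,\ v_{l+1}=0$ give $a_{l+1}=0$. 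Substituting and separating the constant part from the rest, the left side becomes $(2-n_j)-\tfrac1q\big[(u_{j-1}-n_ju_j+u_{j+1})+(v_{j-1}-n_jv_j+v_{j+1})\big]$, and both bracketed expressions vanish by the three-term recurrences in Lemma~\ref{cf}(1) (applied, in particular, at the end indices $j=1$ and $j=l$). This establishes (1); the inequalities $0\le a_j<1$ follow from $u_j,v_j\ge 1$ and from Lemma~\ref{cf}(5).

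For (2) I would use adjunction on $S'$: since $A_j$ is a smooth rational curve, $A_jK_{S'}=-2-A_j^2=n_j-2$, whence $\mathcal{D}_pK_{S'}=\sum_j a_j(n_j-2)=\sum_j\big(1-\tfrac{v_j+u_j}{q}\big)(n_j-2)$. The identity $\mathcal{D}_pK_{S'}=-\mathcal{D}_p^2$ is the one already recorded just before the lemma, coming from intersecting $K_{S'}\equiv f^*K_S-\sum_{p'}\mathcal{D}_{p'}$ with $\mathcal{D}_p$ and noting that $\mathcal{D}_p$ is orthogonal to $f^*K_S$ and to $\mathcal{D}_{p'}$ for $p'\ne p$.

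For (3) I would compute $\mathcal{D}_p^2=-\mathcal{D}_pK_{S'}=-\sum_j(n_j-2)+\tfrac1q\sum_j(u_j+v_j)(n_j-2)$. The first sum equals $2l-\sum_j n_j$. For the second, the telescoping formulas of Lemma~\ref{cf}(4) with $s=l$ and with $s=1$ give $\sum_{j=1}^l(n_j-2)u_j=u_{l+1}-u_l-1=q-q_l-1$ and $\sum_{j=1}^l(n_j-2)v_j=v_0-v_1-1=q-q_1-1$, so their sum is $2q-q_1-q_l-2$. Assembling the pieces yields $\mathcal{D}_p^2=2l-\sum_j n_j+2-\tfrac{q_1+q_l+2}{q}$. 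The case $l=1$ is then immediate: there $q=n_1$ and $q_1=q_l=1$, so the expression collapses to $4-n_1-\tfrac{4}{n_1}=-\tfrac{(n_1-2)^2}{n_1}$.

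The argument is essentially a bookkeeping exercise; the only place that needs genuine care is the handling of the boundary terms --- making sure the conventions $u_0=v_{l+1}=0$ and $v_0=u_{l+1}=q$ line up with the virtual coefficients $a_0=a_{l+1}=0$ so that the single recurrence $a_{j-1}-n_ja_j+a_{j+1}=2-n_j$ and the single pair of identities in Lemma~\ref{cf}(1) cover the end cases $j=1,l$ without a separate argument.
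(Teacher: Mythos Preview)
Your proof is correct and complete. The paper itself defers parts (1) and (3) to the literature (citing \cite{Megyesi}, \cite{HK1}, \cite{LW}) and derives (2) from (1) by adjunction exactly as you do; your argument simply supplies the details behind those citations --- verifying directly that the proposed coefficients solve the adjunction system via the recurrences of Lemma~\ref{cf}(1), and obtaining the closed form in (3) from the telescoping identities of Lemma~\ref{cf}(4).
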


\begin{proof} (1) is well known (cf. \cite{Megyesi} or
 Lemma 2.2 of \cite{HK1}).

 (2) follows from (1) and the adjunction formula.

 (3) is also well known (cf. \cite{LW} or Lemma 3.6 of
 \cite{HK1}).
\end{proof}

Also we  recall the orbifold Euler characteristic
$$ e_{orb}(S) := e(S) - \sum_{p \in Sing(S)} \Big ( 1-\frac{1}{|G_p|} \Big ),$$
where $G_p$ is the local fundamental group of $p$.

The following theorem, called the orbifold Bogomolov-Miyaoka-Yau
inequality, is one of the main ingredients in the proof of our
main theorems.

\begin{theorem}[\cite{Sakai}, \cite{Miyaoka}, \cite{KNS}, \cite{Megyesi}]\label{bmy} Let
$S$ be a normal projective surface with quotient singularities
such that $K_S$ is nef. Then
\[
K_{S}^2 \leq 3e_{orb}(S).
\]
In particular, $$ 0\leq e_{orb}(S).$$
\end{theorem}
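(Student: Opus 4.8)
The plan is to recognise the two sides of the asserted inequality as orbifold Chern numbers attached to $S$ and then to invoke the Bogomolov--Miyaoka--Yau inequality in the orbifold category. Since every singular point $p$ is analytically a quotient $\mathbb{C}^2/G_p$ by a finite $G_p\subset GL_2(\mathbb{C})$ without quasi-reflections, $S$ carries a canonical structure of compact complex orbifold $\mathcal{S}$ (a Satake $V$-manifold) with underlying space $S$ and isotropy $G_p$ at $p$. It has a locally free orbifold cotangent sheaf $\Omega^1_{\mathcal{S}}$ and rational orbifold Chern numbers, and I would first check the two identifications
\[
c_1(\Omega^1_{\mathcal{S}})^2=K_S^2,\qquad c_2(\Omega^1_{\mathcal{S}})=e_{orb}(S).
\]
The first holds because $c_1(\Omega^1_{\mathcal{S}})$ is represented by the $\mathbb{Q}$-Cartier divisor $K_S$ (the orbifold canonical bundle pulls back to $K_{\mathbb{C}^2}$ under each local uniformization) and the orbifold intersection form restricts to the usual $\mathbb{Q}$-valued one. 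The second is the stratified Euler-number formula for an orbifold: $c_2(\Omega^1_{\mathcal{S}})=e(S\setminus Sing(S))+\sum_p\tfrac1{|G_p|}=e(S)-\sum_p(1-\tfrac1{|G_p|})=e_{orb}(S)$, since the chart $[\mathbb{C}^2/G_p]$ contributes $1/|G_p|$ to $c_2$. Equivalently, via the formulas of Section~3, $K_S^2=K_{S'}^2+\sum_p\mathcal{D}_pK_{S'}$ and $e_{orb}(S)=e(S')-\sum_p\bigl(e(f^{-1}(p))-\tfrac1{|G_p|}\bigr)$, so the statement is also the logarithmic BMY inequality for the klt pair $(S',\sum_p\mathcal{D}_p)$ whose log-canonical class $K_{S'}+\sum_p\mathcal{D}_p\equiv f^*K_S$ is nef.

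With this translation the theorem becomes the orbifold inequality $c_1(\Omega^1_{\mathcal{S}})^2\le 3\,c_2(\Omega^1_{\mathcal{S}})$ under the hypothesis that $K_S$, hence $c_1(\Omega^1_{\mathcal{S}})$, is nef. If $K_S^2>0$, so that $K_{\mathcal{S}}$ is nef and big, I would carry out Miyaoka's algebraic argument on $\mathcal{S}$: when $\Omega^1_{\mathcal{S}}$ is semistable with respect to $K_S$, Bogomolov's inequality for this rank-$2$ orbifold bundle, sharpened by the non-existence of destabilizing sub-line-bundles in its symmetric powers (generic semipositivity), yields $c_1^2\le 3c_2$; when $\Omega^1_{\mathcal{S}}$ is not $K_S$-semistable, a maximal destabilizing sub-line-bundle $\mathcal{L}\hookrightarrow\Omega^1_{\mathcal{S}}$ is an orbifold foliation on $\mathcal{S}$, and bounding $\mathcal{L}\cdot K_S$ from below using nefness of $K_S$ together with the mildness of the foliation at the points $p$ forces $\mathcal{L}$ to be torsion or the foliation to be a fibration, both incompatible with $K_{\mathcal{S}}$ nef and big. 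If instead $K_S^2=0$, then $K_S$ nef forces $\kappa(S)\le1$ and a direct analysis on the minimal model of a resolution (or the limiting argument below) gives $e_{orb}(S)\ge0=\tfrac13K_S^2$. Combining the cases gives $K_S^2\le 3e_{orb}(S)$, and since $K_S$ nef gives $K_S^2\ge0$ we also obtain $e_{orb}(S)\ge0$.

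The hard part is this second step: upgrading Bogomolov's bare semistability bound $c_1^2\le 4c_2$ to the sharp $c_1^2\le 3c_2$, and making the foliation dichotomy genuinely work over the orbifold, where the charts $\mathbb{C}^2/G_p$ inject fractional contributions and one must exclude the exceptional foliated surfaces. An alternative that avoids the foliation analysis is the differential-geometric route of Kobayashi--Nakamura--Sakai and Megyesi: when $K_S$ is ample, produce an orbifold K\"ahler--Einstein metric by applying Yau's theorem to $\mathcal{S}$ and read $c_1^2\le 3c_2$ off the pointwise inequality between the relevant Chern forms, then pass from $K_S$ ample to $K_S$ merely nef by a perturbation and semicontinuity argument; here the obstacle shifts to the orbifold Aubin--Yau existence theorem and the limiting step. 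In either route the only ingredient specific to the present paper is the translation in the first step; the inequality itself is the deep input being cited.
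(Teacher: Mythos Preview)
The paper does not prove this statement: Theorem~\ref{bmy} is stated with citations to \cite{Sakai}, \cite{Miyaoka}, \cite{KNS}, \cite{Megyesi} and no argument, and is used throughout only as a black box. So there is no proof in the paper to compare your proposal against.

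That said, your sketch correctly identifies the content of the cited results and the two standard routes (Miyaoka's algebraic argument via generic semipositivity and foliation analysis, and the K\"ahler--Einstein route of Kobayashi--Nakamura--Sakai and Megyesi), and you are right that the only thing specific to this paper is the translation of $K_S^2$ and $e_{orb}(S)$ into orbifold Chern numbers. Your closing sentence already acknowledges that the inequality itself is the deep input being cited rather than something the paper establishes, which is exactly the situation.
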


The weaker inequality holds when $-K_S$ is nef.

\begin{theorem}[\cite{KM}]\label{bmy2} Let
$S$ be a normal projective surface with quotient singularities
such that $-K_S$ is nef. Then $$ 0\leq e_{orb}(S).$$
\end{theorem}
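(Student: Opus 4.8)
The plan is to reduce the statement, by means of the minimal model program, to a short list of extremal surfaces, and to verify $0\le e_{orb}$ in each of them. Since quotient singularities are log terminal and $\mathbb{Q}$-factorial, one may run a $K_S$-MMP on $S$; as $-K_S$ is nef, $K_S$ is non-positive on every curve, so (there being no flips in dimension two) after finitely many divisorial contractions of $K_S$-negative extremal rays the program ends at a model with $K$ nef or at a Mori fibre space. First I would check that each such contraction $\varphi\colon S\to T$ is harmless. Writing $K_S=\varphi^{*}K_T+aE$ for the contracted curve $E$, with $a=(K_S\cdot E)/E^{2}>0$, and $\varphi^{*}\bar C=C+mE$ for a curve $\bar C\subset T$ with strict transform $C$, where $m=(C\cdot E)/(-E^{2})\ge 0$, the projection formula and the relation $K_S\cdot E=aE^{2}$ give $-K_T\cdot\bar C=-K_S\cdot C+a\,(E\cdot C)\ge 0$, so ``$-K$ nef'' is preserved along the MMP. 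One also checks that $e_{orb}$ does not increase: as orbifolds $S\setminus E\cong T\setminus\{\varphi(E)\}$, so $e_{orb}(S)-e_{orb}(T)$ is the orbifold Euler number of the rational curve $E$ inside $S$ minus $1/|G_{\varphi(E)}|$ (minus $1$ if $\varphi(E)$ is a smooth point), and the former is $\ge 1/|G_{\varphi(E)}|$ because the new cyclic point absorbs the entire orbifold structure carried by $E$. Hence it suffices to prove $0\le e_{orb}$ for the output of the MMP.

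If the output has $K\equiv 0$, then $K$ is nef and Theorem~\ref{bmy} gives $0=K^{2}\le 3e_{orb}$, and we are done. If the output is a Mori fibre space $g\colon\bar S\to B$ with $\dim B=1$, then $g$ is a conic bundle and, $-K_{\bar S}$ being $g$-ample and nef, one checks $B\cong\mathbb{P}^{1}$; then $\bar S$ is rational, so $e(\bar S)=2+\rho(\bar S)=4$ and $e_{orb}(\bar S)=4-\sum_{p}(1-1/|G_p|)$, and the inequality $K_{\bar S}^{2}\ge 0$ forced by the nefness of $-K_{\bar S}$ bounds the number of degenerate fibres and the orders of the cyclic points supported on them, so a finite case analysis of the possible degenerate fibres yields $e_{orb}(\bar S)\ge 0$.

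The remaining, and by far the hardest, case is when the output is a del Pezzo surface $\bar S$ of Picard number one with $-K_{\bar S}$ ample. Then $-K_{\bar S}$ is nef and big, so on the minimal resolution $-K_{S'}$ is big with $K_{S'}^{2}>0$; since a smooth surface ruled over a curve of positive genus has $K^{2}\le 0$, $S'$ (hence $\bar S$) is rational, whence $e(\bar S)=2+b_2(\bar S)=3$ and $e_{orb}(\bar S)=3-\sum_{p}(1-1/|G_p|)$. Everything thus reduces to the orbifold-numerical assertion that a del Pezzo surface of Picard number one with cyclic singularities satisfies $\sum_{p}(1-1/|G_p|)\le 3$; this is in substance a non-existence statement (it forbids, for example, local orders $(2,3,7,q)$ with $q>42$), and it is the real obstacle. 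I would settle it by invoking the fine structure theory of rank-one log del Pezzo surfaces --- Zhang \cite{Zhang}, Gurjar--Zhang \cite{GZ}, and especially Belousov \cite{Belousov}, whose bound on the number of singular points, after an elementary numerical reduction, leaves only a constrained list of local configurations to be excluded --- which is precisely the content of the case analysis carried out in \cite{KM} that the cited reference supplies.
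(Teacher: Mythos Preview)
The paper does not prove this theorem: it is quoted verbatim from Keel--McKernan \cite{KM} and used as a black box, so there is no proof in the paper to compare your sketch against.

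As an independent argument, your proposal has a genuine circularity. The strategy (run the $K_S$-MMP, reduce to a rank-one log del Pezzo surface) is the natural one and is indeed how \cite{KM} proceeds, but at the decisive step --- showing $\sum_p\bigl(1-1/|G_p|\bigr)\le 3$ for a rank-one log del Pezzo --- you explicitly defer to ``the case analysis carried out in \cite{KM}'', which is precisely the reference whose theorem you are asked to establish. Appealing to Belousov \cite{Belousov} does not close the gap: his bound of at most four singular points yields only $\sum_p(1-1/|G_p|)<4$, not $\le 3$, and his 2008 paper in any case postdates the 1999 memoir \cite{KM}, so cannot substitute for its argument. There are softer spots earlier as well. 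The monotonicity of $e_{orb}$ under an extremal divisorial contraction is asserted with only the heuristic that ``the new cyclic point absorbs the entire orbifold structure carried by $E$''; a genuine proof requires controlling exactly which (and how many) singular points of $S$ lie on the contracted curve and comparing their orders to that of the image point. The claim $K_{S'}^2>0$ in the del Pezzo step is unjustified: one has $K_{S'}^2=K_S^2+\bigl(\sum_p\mathcal D_p\bigr)^2$ with the second summand $\le 0$, and for sufficiently bad (non--Du Val) singularities this can be negative; rationality of $S'$ follows instead from $h^1(\mathcal O_{S'})=h^1(\mathcal O_S)=0$ (quotient singularities are rational) together with $\kappa(S')=-\infty$. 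Finally, the conic-bundle case is dismissed with ``a finite case analysis'' that is not actually carried out.
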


\subsection{} Let $S$ be a normal projective surface with quotient singularities and $f : S'
\rightarrow S$ be a minimal resolution of $S$. It is well-known
that the torsion-free part of the second cohomology group,
$$H^2(S', \mathbb{Z})_{free} := H^2(S', \mathbb{Z})/(torsion)$$  has a
lattice structure which is unimodular. For a quotient singular
point $p\in S$, let $$R_p\subset H^2(S', \mathbb{Z})_{free}$$ be
the sublattice of $H^2(S', \mathbb{Z})_{free}$ spanned by the
numerical classes of the components of $f^{-1}(p)$. It is a
negative definite lattice, and its discriminant group
$${\rm disc}(R_p):={\rm Hom}(R_p, \mathbb{Z})/R_p$$ is isomorphic to the
abelianization $G_p/[G_p, G_p]$ of the local fundamental group
$G_p$. In particular, the absolute value $|\det(R_p)|$ of the
determinant
 of the intersection matrix of $R_p$ is equal to the order
$|G_p/[G_p, G_p]|$. Let
 $$R= \oplus_{p \in Sing(S)} R_p\subset H^2(S', \mathbb{Z})_{free}$$ be the sublattice of $H^2(S', \mathbb{Z})_{free}$ spanned by the
numerical classes of the exceptional curves of $f:S'\to S$. We
also consider the sublattice $$R+\langle K_{S'}\rangle\subset
H^2(S', \mathbb{Z})_{free}$$ spanned by $R$ and the canonical
class $K_{S'}$. Note that $${\rm rank}(R)\le {\rm rank}(R+\langle
K_{S'}\rangle)\le{\rm rank}(R)+1.$$

\begin{lemma}[\cite{HK1}, Lemma 3.3]\label{detR} Let $S$ be a normal projective surface with quotient singularities and $f : S'
\rightarrow S$ be a minimal resolution of $S$. Then the following
hold true.
\begin{enumerate}
\item ${\rm rank}(R+\langle K_{S'}\rangle)={\rm rank}(R)$ if and
only if $K_S$ is numerically trivial. \item $\det(R+\langle
K_{S'}\rangle)=\det(R)\cdot K_S^2$ if $K_S$ is not numerically
trivial. \item If in addition $b_2(S)=1$ and $K_S$ is not
numerically trivial, then $R+\langle K_{S'}\rangle$ is a
sublattice of finite index in the unimodular lattice $H^2(S',
\mathbb{Z})_{free}$, in particular $|\det(R+\langle
K_{S'}\rangle)|$ is a nonzero square number.
\end{enumerate}
\end{lemma}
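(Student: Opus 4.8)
The plan is to work throughout inside the rational quadratic space $V:=H^2(S',\mathbb{Q})$ equipped with the intersection form. Since quotient singularities are rational, the pull-back $f^*\colon H^2(S,\mathbb{Q})\to V$ is injective, one has $b_2(S')=b_2(S)+\operatorname{rank}(R)$, and $V=f^*H^2(S,\mathbb{Q})\oplus(R\otimes\mathbb{Q})$; moreover $f^*\alpha\cdot A_j=\alpha\cdot f_*A_j=0$ for every exceptional curve $A_j$, so this is an \emph{orthogonal} decomposition, $f^*H^2(S,\mathbb{Q})=(R\otimes\mathbb{Q})^{\perp}$, and $(f^*K_S)^2=K_S^2$. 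I will also use the relation $K_{S'}\underset{num}{\equiv}f^*K_S-D$, with $D:=\sum_{p}\mathcal{D}_p\in R\otimes\mathbb{Q}$, recorded in the excerpt, together with the fact that $R\otimes\mathbb{Q}$ is negative definite, hence nondegenerate.

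For (1): one has $K_{S'}\in R\otimes\mathbb{Q}$ if and only if $f^*K_S=K_{S'}+D\in R\otimes\mathbb{Q}$. But $f^*K_S$ always lies in $(R\otimes\mathbb{Q})^{\perp}$, and since $R\otimes\mathbb{Q}$ is nondegenerate, $(R\otimes\mathbb{Q})\cap(R\otimes\mathbb{Q})^{\perp}=0$; thus this happens exactly when $f^*K_S=0$ in $V$, which, $f^*$ being injective on numerical classes, means $K_S$ is numerically trivial. Hence $\operatorname{rank}(R+\langle K_{S'}\rangle)=\operatorname{rank}(R)$ iff $K_S$ is numerically trivial, and $\operatorname{rank}(R+\langle K_{S'}\rangle)=\operatorname{rank}(R)+1$ otherwise.

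For (2): assume $K_S$ is not numerically trivial and let $A_1,\dots,A_L$ be the exceptional curves, a $\mathbb{Z}$-basis of $R$. By (1) the group $R+\langle K_{S'}\rangle$ is torsion-free (a subgroup of $H^2(S',\mathbb{Z})_{free}$) of rank $L+1$ and is generated by the $L+1$ elements $A_1,\dots,A_L,K_{S'}$, which therefore form a $\mathbb{Z}$-basis; so $\det(R+\langle K_{S'}\rangle)$ is the determinant of their Gram matrix. Adding to $K_{S'}$ the $\mathbb{Q}$-linear combination $D$ of $A_1,\dots,A_L$ replaces $K_{S'}$ by $K_{S'}+D=f^*K_S$ and changes the ordered basis by a unipotent (rational) transition matrix, hence leaves the Gram determinant unchanged; since $f^*K_S$ is orthogonal to each $A_i$, the new Gram matrix is block diagonal with blocks the Gram matrix of $R$ and the $1\times1$ block $(f^*K_S)^2=K_S^2$. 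Therefore $\det(R+\langle K_{S'}\rangle)=\det(R)\cdot K_S^2$.

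For (3): add the hypothesis $b_2(S)=1$. Then $\operatorname{rank}\big(H^2(S',\mathbb{Z})_{free}\big)=b_2(S')=1+L$, which by (1) equals $\operatorname{rank}(R+\langle K_{S'}\rangle)$; so $R+\langle K_{S'}\rangle$ is a full-rank, hence finite-index (hence nondegenerate), sublattice of the unimodular lattice $\Lambda:=H^2(S',\mathbb{Z})_{free}$, of index $m:=[\Lambda:R+\langle K_{S'}\rangle]\ge 1$. The discriminant-index identity $\det(R+\langle K_{S'}\rangle)=m^2\det(\Lambda)$ together with $|\det(\Lambda)|=1$ gives $|\det(R+\langle K_{S'}\rangle)|=m^2$, a nonzero square number, and by (2) also $|\det(R)\cdot K_S^2|=m^2$. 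The only inputs that are not elementary lattice algebra are the orthogonal splitting of $V$ and the identity $b_2(S')=b_2(S)+\operatorname{rank}(R)$, both consequences of the rationality of quotient singularities; the remaining ingredients (invariance of the Gram determinant under a rational change of basis, the fact that rank-many generators of a free $\mathbb{Z}$-module form a basis, the index-squared formula for sublattice discriminants) are routine, so I anticipate no real difficulty — the one point to state with care is precisely why $A_1,\dots,A_L,K_{S'}$ is a genuine $\mathbb{Z}$-basis of $R+\langle K_{S'}\rangle$ and not merely a $\mathbb{Q}$-basis.
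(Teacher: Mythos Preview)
Your proof is correct. Note, however, that the paper does not give its own proof of this lemma at all: it is simply quoted verbatim from \cite{HK1}, Lemma~3.3, so there is no in-paper argument to compare against. What you have supplied is a clean self-contained proof using the orthogonal decomposition $H^2(S',\mathbb{Q})=f^*H^2(S,\mathbb{Q})\oplus(R\otimes\mathbb{Q})$ and the canonical bundle formula $K_{S'}\equiv f^*K_S-D$ with $D\in R\otimes\mathbb{Q}$; the one delicate point, that $A_1,\dots,A_L,K_{S'}$ is a genuine $\mathbb{Z}$-basis and not merely a $\mathbb{Q}$-basis, you have handled correctly by observing that $L+1$ generators of a free abelian group of rank $L+1$ must be a basis.
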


We denote the number $|\det(R+\langle K_{S'}\rangle)|$ by $D$,
i.e., we define
$$D:=|\det(R+\langle K_{S'}\rangle)|.$$
The following is well known.

\begin{lemma}[]\label{ep} Assume that $p$ is a cyclic singularity such that
$f^{-1}(p)$ has $l$ components $A_{1} \ldots, A_l$ with ${A_i}^2 =
-n_i$ forming a string of smooth rational curves
$\overset{-n_1}{\circ}-\overset{-n_2}\circ-\cdots-\overset{-n_l}\circ$.
Then ${\rm disc}(R_p)$ is a cyclic group generated by
$$e_p:=A_l^* = -\frac{1}{q}\underset{i = 1}{
\overset{l}{\sum}} u_{i} A_{i}$$ where $u_i=|\det[n_1, n_2,
\ldots, n_{i-1}]|$ as in Notation \ref{uv-n}. It has the property
that
$$e_pA_l=1,\,\,\, e_pA_j=0
\,\,\,(1\le j\le l-1)\,\,\,{\rm and}\,\,\, e_p^2 = -
\frac{u_l}{q}= - \frac{q_l}{q}.$$
\end{lemma}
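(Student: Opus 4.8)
The plan is to verify directly that the rational class $e_p := -\frac{1}{q}\sum_{i=1}^{l} u_i A_i$ satisfies the defining relations of the dual basis vector $A_l^{*}$, and then to read off the cyclicity of the discriminant group from the fact that $u_1 = 1$. Since $R_p$ is negative definite, its intersection form is nondegenerate, so $A_1,\ldots,A_l$ form a $\mathbb{Z}$-basis of $R_p$ and there is a unique element $A_l^{*}\in R_p\otimes\mathbb{Q}$ with $A_l^{*}\cdot A_j=\delta_{jl}$. It therefore suffices to check that $e_p$ enjoys this property.

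First I would compute $e_p\cdot A_j = -\frac{1}{q}\sum_i u_i (A_i\cdot A_j)$ using that the string has intersection matrix $M(-n_1,\ldots,-n_l)$, so the sum collapses to at most three terms, with only two surviving at the ends of the chain. For $2\le j\le l-1$ one gets $-\frac{1}{q}\bigl(u_{j-1}-n_j u_j+u_{j+1}\bigr)$, which vanishes by the recursion $u_{j+1}=n_j u_j-u_{j-1}$ of Lemma \ref{cf}(1); for $j=1$ one gets $-\frac{1}{q}\bigl(-n_1 u_1+u_2\bigr)=0$, since $u_2=n_1 u_1-u_0=n_1$ with the conventions $u_0=0$, $u_1=1$ of Notation \ref{uv-n}; and for $j=l$ one gets $-\frac{1}{q}\bigl(u_{l-1}-n_l u_l\bigr)=-\frac{1}{q}(-u_{l+1})=1$, using $u_{l+1}=q$. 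Hence $e_p=A_l^{*}$. The value of $e_p^2$ is then immediate: $e_p^2 = e_p\cdot\bigl(-\frac{1}{q}\sum_i u_i A_i\bigr) = -\frac{1}{q}\sum_i u_i (e_p\cdot A_i) = -\frac{u_l}{q}$, and $u_l=q_l$ by Notation \ref{uv-n}. (The case $l=1$, where $e_p=-\frac{1}{n_1}A_1$ and $q=n_1$, is covered by the same computation.)

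For the discriminant group, recall that $\mathrm{disc}(R_p)=R_p^{*}/R_p$ is finite of order $|\det(R_p)|=|[n_1,\ldots,n_l]|=q$. Since the $A_i$ form a basis of $R_p$, an integer multiple $m\,e_p=-\sum_i \frac{m u_i}{q}A_i$ lies in $R_p$ if and only if $q\mid m u_i$ for every $i$; taking $i=1$ and using $u_1=1$ forces $q\mid m$, and conversely $q\mid m$ clearly suffices. So the image of $e_p$ in $\mathrm{disc}(R_p)$ has order exactly $q$, hence generates a subgroup of full order; therefore $\mathrm{disc}(R_p)$ is cyclic, generated by $e_p$. The computation is elementary throughout; the only point requiring care is the bookkeeping at the two ends of the chain, i.e. invoking the boundary values $u_0=0$, $u_1=1$, $u_{l+1}=q$ correctly — this is precisely what produces the relations $e_p\cdot A_1=0$ and $e_p\cdot A_l=1$ — and I do not anticipate any genuine obstacle beyond this.
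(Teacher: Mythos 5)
Your verification is correct and complete: the recursion $u_{j+1}=n_ju_j-u_{j-1}$ together with the boundary values $u_0=0$, $u_1=1$, $u_{l+1}=q$ gives exactly $e_pA_j=\delta_{jl}$, the formula $e_p^2=-u_l/q=-q_l/q$ follows, and the observation that $u_1=1$ forces $q\mid m$ whenever $me_p\in R_p$ correctly shows $e_p$ has order $q=|\mathrm{disc}(R_p)|$, hence generates. The paper states this lemma as well known and gives no proof, and your argument is precisely the standard direct computation that justifies it.
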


The following will be also useful in our proof.

\begin{lemma}[\cite{HK2}, Lemma 2.5]\label{coprime} Let $S$ be a $\mathbb{Q}$-homology projective plane with cyclic singularities
 such that $H_1(S^0,\mathbb{Z}) = 0$. Let $f:S' \rightarrow S$ be a minimal resolution. Then
\begin{enumerate}
\item $H^2(S', \mathbb{Z})$ is torsion free, i.e., $H^2(S',
\mathbb{Z})=H^2(S', \mathbb{Z})_{free}$, \item $R$ is a primitive
sublattice of the unimodular lattice $H^2(S', \mathbb{Z})$, \item
 ${\rm disc}(R)$ is a cyclic group, in particular, the orders $|G_p| = |\det(R_p)|$ are pairwise relatively prime, \item $K_S$ is not numerically
 trivial, i.e., $K_S$ is either ample or anti-ample,
\item $D=|\det(R)|K_S^2$ and is a nonzero square number,
 \item the Picard group $Pic(S')$ is generated over $\mathbb{Z}$ by the
exceptional curves and a $\mathbb{Q}$-divisor $M$ of the form
$$M = \frac{1}{\sqrt{D}} f^*K_S +  \underset{p \in Sing(S)}{\sum}
b_p e_p $$  for some integers $b_p$, where $e_p$ is the generator
of ${\rm disc}(R_p)$ as in Lemma \ref{ep}.
\end{enumerate}
\end{lemma}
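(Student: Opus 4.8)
The plan is to use the hypothesis $H_1(S^0,\mathbb Z)=0$ topologically and then let lattice theory finish the job, proving the six assertions roughly in the order $(1),(2),(3),(4),(5),(6)$. Write $S^0=S'\setminus\mathcal F$. Since $\mathcal F$ is a complex curve in $S'$, the inclusion $S^0\hookrightarrow S'$ induces a surjection on $\pi_1$, so $H_1(S',\mathbb Z)$ is a quotient of $H_1(S^0,\mathbb Z)=0$; with the universal coefficient theorem this shows $H^2(S',\mathbb Z)$ is torsion free, hence a unimodular lattice, which is (1). For (2), observe that $\mathcal F$ is a disjoint union of chains of smooth rational curves, so $H_1(\mathcal F,\mathbb Z)=0$; letting $W$ be the complement in $S'$ of an open regular neighbourhood of $\mathcal F$, we have $W\simeq S^0$, and the long exact sequence of the pair $(S',\mathcal F)$ (using $H_1(\mathcal F,\mathbb Z)=0$), together with Poincar\'e duality, excision, and Lefschetz duality for $W$, yield
\[
H^2(S',\mathbb Z)/R\;\cong\;H_2(S',\mathcal F;\mathbb Z)\;\cong\;H^2(W;\mathbb Z)\;\cong\;H^2(S^0;\mathbb Z),
\]
and the right-hand group is torsion free by the universal coefficient theorem because $H_1(S^0,\mathbb Z)=0$; hence $R$ is a primitive sublattice of $H^2(S',\mathbb Z)$. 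As this quotient is torsion free of rank $b_2(S')-{\rm rank}(R)=1$, it is isomorphic to $\mathbb Z$, a fact I will reuse in (6). For (3): $R$ is primitive of corank one in the unimodular lattice $H^2(S',\mathbb Z)$, so its orthogonal complement $R^\perp$ has rank one and ${\rm disc}(R)\cong{\rm disc}(R^\perp)$; since a rank-one lattice has cyclic discriminant group, ${\rm disc}(R)=\oplus_p\,{\rm disc}(R_p)=\oplus_p\,\mathbb Z/|G_p|$ is cyclic, and by the Chinese Remainder Theorem the orders $|G_p|$ are pairwise coprime.

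The crux is (4). First, $H^2(S',\mathbb Q)$ is spanned by the exceptional classes together with $f^*H$ for $H$ an ample class on $S$, so $\rho(S')=b_2(S')$; combined with $b_2(S')=2p_g(S')+h^{1,1}(S')$ and $\rho(S')\le h^{1,1}(S')$ this forces $p_g(S')=0$, and in particular $K_{S'}\not\sim 0$. Suppose, for contradiction, that $K_S\equiv 0$. Then $f^*K_S\equiv 0$, so the formula $K_{S'}\equiv f^*K_S-\sum_p\mathcal D_p$ of Section 3 gives $K_{S'}\equiv-\sum_p\mathcal D_p$. Since $p_g(S')=0$, numerical equivalence of $\mathbb Q$-classes on $S'$ is equality in $H^2(S',\mathbb Q)$, so $K_{S'}=-\sum_p\mathcal D_p$ there; as $\sum_p\mathcal D_p\in R\otimes\mathbb Q$ and $R$ is primitive by (2), this forces $K_{S'}\in R$. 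Write $K_{S'}=\sum_j m_jA_j$ with $m_j\in\mathbb Z$, and recall from Lemma \ref{Dp} that $-\sum_p\mathcal D_p=\sum_j\big(\tfrac{u_j+v_j}{q}-1\big)A_j$; linear independence of the $A_j$ gives $m_j=\tfrac{u_j+v_j}{q}-1$, which lies in $(-1,0]$ by Lemma \ref{cf}(5), hence $m_j=0$. Thus $K_{S'}\equiv 0$, and since $b_1(S')=0$ and $NS(S')$ is torsion free this means $K_{S'}\sim 0$, contradicting $p_g(S')=0$. Therefore $K_S$ is ample or anti-ample, and in either case $K_S^2>0$.

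Assertion (5) is then immediate: by Lemma \ref{detR}(2), $D=|\det(R+\langle K_{S'}\rangle)|=|\det(R)|\,K_S^2$, which is positive by (4), and it is a nonzero square by Lemma \ref{detR}(3). For (6), pick a generator $c$ of the rank-one primitive lattice $R^\perp\subset H^2(S',\mathbb Z)=Pic(S')$. Being a primitive vector in a unimodular lattice, $c$ has the property that $x\mapsto x\cdot c$ maps $Pic(S')$ onto $\mathbb Z$; since this map kills $R$, it induces an isomorphism $Pic(S')/R\cong\mathbb Z$. Choose $x_0\in Pic(S')$ with $x_0\cdot c=1$, so $Pic(S')=R+\mathbb Z x_0$. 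Decompose $x_0=y+z$ orthogonally over $\mathbb Q$ with $z\in R^\perp\otimes\mathbb Q$ and $y\in R^*=\oplus_p R_p^*$. Since $f^*K_S$ spans $R^\perp\otimes\mathbb Q$, since $|\det(R)|=|\det(R^\perp)|=c^2$, and since $K_S^2=(f^*K_S)^2$, a short computation gives $z=\pm\tfrac1{\sqrt D}f^*K_S$; writing $y\equiv\sum_p b_pe_p\pmod R$ with $b_p\in\mathbb Z$, which is possible by Lemma \ref{ep}, we obtain $x_0\equiv\pm\tfrac1{\sqrt D}f^*K_S+\sum_p b_pe_p\pmod R$. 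Replacing $x_0$ by the integral representative $M$ of this class (and $M$ by $-M$ to normalize the sign if necessary), $Pic(S')$ is generated by $R$, i.e. by the exceptional curves, together with $M$, which has the asserted shape.

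I expect (4) to be the main obstacle: assertions (1)--(3) and (5) are essentially formal given the topological computation and Lemma \ref{detR}, and (6) is lattice bookkeeping, but excluding $K_S\equiv 0$ needs genuine surface-theoretic input ($p_g(S')=0$) together with the explicit form of $\mathcal D_p$ via Lemma \ref{cf}(5); the delicate point there is upgrading the numerical relation $K_{S'}\equiv-\sum_p\mathcal D_p$ to an \emph{equality} of classes in $H^2(S',\mathbb Q)$, which is precisely what lets primitivity of $R$ be applied.
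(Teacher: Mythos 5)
Your proof is correct. The paper does not actually reprove this lemma---it cites \cite{HK2}, Lemma 2.5---so the only in-paper point of comparison is the proof of its generalization, Lemma \ref{general}. Your parts (1)--(3) and (5) are the standard arguments (the topology of the pair $(S',\mathcal{F})$, Lefschetz duality on the complement of a tubular neighbourhood, and Lemma \ref{detR}), and your part (6) reaches the same conclusion as the paper's Lemma \ref{general}(3) by a more hands-on route: instead of invoking the Nikulin-style description of $Pic(S')/(R^\perp\oplus R)$ as an isotropic subgroup of ${\rm disc}(R^\perp\oplus R)$, you pick a generator $c$ of the rank-one primitive lattice $R^\perp$, use unimodularity to produce $x_0$ with $x_0\cdot c=1$, and project orthogonally; the two computations are equivalent, and yours has the small merit of making the normalization of $M$ explicit. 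Part (4), which the paper takes entirely on faith from \cite{HK2}, you argue in full and correctly: $\rho(S')=b_2(S')$ forces $p_g(S')=0$; if $K_S\equiv 0$ then $K_{S'}$ lies in the primitive lattice $R$, so has integer coefficients, which Lemma \ref{Dp}(1) and Lemma \ref{cf}(5) confine to $(-1,0]$ and hence to $0$; and torsion-freeness of $Pic(S')$ upgrades $K_{S'}\equiv 0$ to $K_{S'}\sim 0$, contradicting $p_g=0$ (this last step is also what excludes the Enriques alternative). The only point worth a sentence of justification in (6) is that the $R\otimes\mathbb{Q}$-component $y$ of $x_0$ lies in $R^*$; this holds because $y\cdot A_j=x_0\cdot A_j\in\mathbb{Z}$, so it is a remark, not a gap.
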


Finally we generalize Lemma \ref{coprime} to the case without the
condition $H_1(S^0,\mathbb{Z})=0$. We will encounter this general
situation later in our proof (see Sections 5 and 6).

Let $S$ be a $\mathbb{Q}$-homology projective plane with cyclic
singularities and $f:S' \rightarrow S$ be a minimal resolution.
Denote by $Pic(S')_{free}$ the group of numerical equivalence
classes of divisors, i.e.,
$$Pic(S')_{free}:=Pic(S')/(torsion).$$ With the intersection
pairing, $Pic(S')_{free}$ becomes a unimodular lattice isometric
to $H^2(S', \mathbb{Z})_{free}$. Denote  by $$\bar{R}\subset
Pic(S')_{free}$$ the primitive closure of $R\subset
Pic(S')_{free}$, the sublattice spanned by the numerical
equivalence classes of exceptional curves of $f$.

\begin{lemma}\label{general} Let $S$ be a $\mathbb{Q}$-homology projective plane with cyclic singularities
and $f:S' \rightarrow S$ be a minimal resolution. Assume that
$K_S$ is not numerically
 trivial. Then the following hold true.
\begin{enumerate}
\item $D=|\det(R)|
K_S^2$ and is a nonzero square number.
\item ${\rm disc}(\bar{R})$ is a cyclic group of order $|\det(\bar{R})|=\frac{|\det(R)|}{c^2}$ where $c$ is the order of $\bar{R}/R$.
\item Define $$D':=|\det(\bar{R})|
K_S^2=\frac{D}{c^2}.$$ Then $Pic(S')_{free}$ is generated over
$\mathbb{Z}$ by the numerical equivalence classes of exceptional
curves, an element $T\in Pic(S')_{free}$ giving a generator of
$\bar{R}/R$ and a $\mathbb{Q}$-divisor of the form
$$M = \frac{1}{\sqrt{D'}} f^*K_S +  z, $$ where $z$ is a generator of ${\rm
disc}(\bar{R})$, hence of the form $z= \underset{p \in
Sing(S)}{\sum} b_p e_p$ for some integers $b_p$, where $e_p$ is
the generator of ${\rm disc}(R_p)$ as in Lemma \ref{ep}.
\item For each singular point $p$, denote by
$A_{1, p}, A_{2, p}, \ldots, A_{l_p, p}$ the exceptional curves of
$f$ at $p$ and by $q_p$ the order of the local fundamental group
at $p$. Then every element $E\in Pic(S')_{free}$ can be written
uniquely as
\begin{equation}\label{E}
E = mM + \underset{p \in Sing(S)}{\sum} \overset{l_p}{\underset{i
= 1}{\sum}} a_{i, p} A_{i,p}
\end{equation}
 for some integer $m$
and some $a_{i,p}\in \frac{1}{c}\mathbb{Z}$ for all $i, p$.
\item $E$ is supported on $f^{-1}(Sing(S))$ if and only if $m=0$.
Moreover, if $E$ is effective (modulo a torsion) and not supported
on $f^{-1}(Sing(S))$, then $m>0$ when $K_S$ is ample, and $m<0$
when $-K_S$ is ample.
\end{enumerate}
\end{lemma}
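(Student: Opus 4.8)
The plan is to run the same argument as in Lemma \ref{coprime} (which is \cite{HK2}, Lemma 2.5), but dropping the hypothesis $H_1(S^0,\mathbb{Z})=0$ and working systematically with the primitive closure $\bar R$ in place of $R$. Part (1) is immediate: since $K_S$ is not numerically trivial, Lemma \ref{detR}(2) gives $\det(R+\langle K_{S'}\rangle)=\det(R)\cdot K_S^2$, and $b_2(S)=1$ together with Lemma \ref{detR}(3) forces $R+\langle K_{S'}\rangle$ to be of finite index in the unimodular lattice $H^2(S',\mathbb{Z})_{free}$, so $D=|\det(R)|\,K_S^2$ is a nonzero square. For part (2), recall $\mathrm{disc}(R)=\bigoplus_p \mathrm{disc}(R_p)$ and each $\mathrm{disc}(R_p)\cong G_p/[G_p,G_p]$ is cyclic by the structure of cyclic quotient singularities; a direct sum of cyclic groups need not be cyclic, but $\bar R/R$ sits inside $\mathrm{disc}(R)$ as an isotropic subgroup for the discriminant form, and $\mathrm{disc}(\bar R)$ is the quotient $(\bar R/R)^{\perp}/(\bar R/R)$. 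The key observation is that $\mathrm{disc}(\bar R)$ embeds into the discriminant group of its orthogonal complement in the unimodular lattice $Pic(S')_{free}$, and that complement is $\langle f^*K_S\rangle$ up to finite index (here we use $b_2(S)=1$ again, so $\bar R^{\perp}$ has rank one), whose discriminant group is cyclic. Hence $\mathrm{disc}(\bar R)$ is cyclic. The order statement $|\det(\bar R)|=|\det(R)|/c^2$ with $c=|\bar R/R|$ is the standard formula for a finite-index extension of lattices.

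For part (3), set $D'=|\det(\bar R)|\,K_S^2=D/c^2$, which is again a nonzero square since $D$ is a square and $c^2\mid D$ — indeed $\bar R^{\perp}=R^{\perp}$ has a generator whose square determines everything, and one checks $|\det(\bar R^{\perp})|=D'$. Then $\frac{1}{\sqrt{D'}}f^*K_S$ is a well-defined element of the dual lattice $(\bar R^{\perp})^{\vee}$, and the pair $(\bar R, \bar R^{\perp})$ being mutually primitive in the unimodular lattice $Pic(S')_{free}$ forces $\mathrm{disc}(\bar R)\cong \mathrm{disc}(\bar R^{\perp})$ via an anti-isometry of discriminant forms; transporting a generator $z$ of $\mathrm{disc}(\bar R)$ to the corresponding generator of $\mathrm{disc}(\bar R^{\perp})$ (which is a multiple of $\frac{1}{\sqrt{D'}}f^*K_S$ modulo $\bar R^{\perp}$), one produces the element $M=\frac{1}{\sqrt{D'}}f^*K_S+z$ of $Pic(S')_{free}$. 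That $z=\sum_p b_p e_p$ with $b_p\in\mathbb{Z}$ follows because $z\in\mathrm{disc}(\bar R)\subset \mathrm{disc}(R)=\bigoplus_p\mathrm{disc}(R_p)$ and each $\mathrm{disc}(R_p)$ is generated by $e_p$ (Lemma \ref{ep}). That the exceptional curves, an element $T$ projecting to a generator of $\bar R/R$, and $M$ generate all of $Pic(S')_{free}$ follows by a rank/index count: these elements generate a sublattice containing $\bar R$ and surjecting onto $Pic(S')_{free}/\bar R^{\vee}$-worth of data, and comparing discriminants shows the index is $1$.

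Part (4) is then a bookkeeping consequence: writing $E\in Pic(S')_{free}$ in terms of the generators from (3) and collecting the $f^*K_S$-component gives the integer $m$ (the coefficient of $M$), while the remaining part lies in $\bar R\otimes\mathbb{Q}$; since $\bar R/R$ has exponent dividing $c$, that part is an $\frac{1}{c}\mathbb{Z}$-combination of the $A_{i,p}$. Uniqueness follows from $f^*K_S\notin R\otimes\mathbb{Q}$ (as $K_S$ is not numerically trivial), which makes the $M$-coefficient well-defined. For part (5): $E$ is supported on $f^{-1}(Sing(S))$ iff $E\in R\otimes\mathbb{Q}\cap Pic(S')_{free}$ iff its $f^*K_S$-component vanishes iff $m=0$. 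If $E$ is effective modulo torsion and not supported on the exceptional locus, then $f(E)$ is a nonzero effective divisor on $S$, so $f(E)\cdot(\pm K_S)>0$ according to whether $\mp K_S$ is ample; since $E\cdot f^*K_S = f(E)\cdot K_S$ and, from the expression \eqref{E}, $E\cdot f^*K_S = m\,M\cdot f^*K_S = \frac{m}{\sqrt{D'}}(f^*K_S)^2 = \frac{m}{\sqrt{D'}}K_S^2$, the sign of $m$ is the sign of $K_S^2$-times-$(f(E)\cdot K_S)$, giving $m>0$ when $K_S$ is ample and $m<0$ when $-K_S$ is ample.

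I expect the main obstacle to be part (2)–(3): carefully justifying that $\mathrm{disc}(\bar R)$ is cyclic and setting up the discriminant-form anti-isometry between $\bar R$ and $\bar R^{\perp}$ inside the unimodular lattice without circularity — in particular making sure $D'$ is genuinely a square and that $\frac{1}{\sqrt{D'}}f^*K_S$ lands in the dual lattice with the right order. Everything else is the unimodular-lattice gluing formalism already implicit in \cite{HK1}, Lemma 3.3 and \cite{HK2}, Lemma 2.5, adapted by replacing "$R$ primitive" with "pass to $\bar R$."
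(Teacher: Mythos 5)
Your proposal is correct and follows essentially the same route as the paper: part (1) via Lemma \ref{detR}, parts (2)--(3) via the standard gluing of the primitive sublattice $\bar{R}$ with its rank-one orthogonal complement inside the unimodular lattice $Pic(S')_{free}$ (the isotropic subgroup $Pic(S')_{free}/(R^\perp\oplus\bar{R})$ being the graph of an anti-isometry of cyclic discriminant groups, generated by an element of the form $\frac{1}{\sqrt{D'}}f^*K_S+z$), and parts (4)--(5) as bookkeeping consequences using $cT\in R$ and $E\cdot f^*K_S=\frac{m}{\sqrt{D'}}K_S^2$. The only difference is cosmetic: where the paper dismisses (2) as ``well known,'' you supply the standard argument that $\mathrm{disc}(\bar{R})$ is cyclic because it is (anti-)isometric to the discriminant group of the rank-one lattice $\bar{R}^\perp$.
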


\begin{proof} (1) follows from Lemma \ref{detR}.

(2) is well known.

(3) We slightly modify the proof of \cite{HK2}, Lemma 2.5. Here,
$R^\perp$ is generated by $$v:=\frac{\sqrt{D'}}{K_S^2}
f^*K_S=\frac{|\det(\bar{R})|}{\sqrt{D'}} f^*K_S,$$ ${\rm
disc}(R^\perp)$ is generated by $$\frac{1}{\sqrt{D'}} f^*K_S,$$
and
$$Pic(S')_{free}/(R^\perp\oplus \bar{R})\subset {\rm
disc}(R^\perp\oplus \bar{R})$$ is an isotropic subgroup of order
$|\det(\bar{R})|$ of ${\rm disc}(R^\perp\oplus \bar{R})$, hence is
generated by an element $$M\in {\rm disc}(R^\perp\oplus \bar{R})$$
of order $|\det(\bar{R})|$. Moreover $M$ is the sum of a generator
of ${\rm disc}(R^\perp)$ and a generator of ${\rm disc}(\bar{R})$,
since $Pic(S')_{free}$ is unimodular. By replacing $M$ by $kM$ for
a suitable choice of an integer $k$, we get $M$ of the desired
form. We have shown that $Pic(S')_{free}$ is generated over
$\mathbb{Z}$ by $v$, $\bar{R}$ and $M$. Note that
$$|\det(\bar{R})|M\equiv v \,\,\, {\rm  modulo} \,\,\,\bar{R},$$ i.e., $v$ is generated by $M$ and $\bar{R}$.
Finally $\bar{R}$ is generated over $\mathbb{Z}$ by $R$ and $T$.

(4) By (3) $E$ is a $\mathbb{Z}$-linear combination of $M$, $T$,
and $A_{i,p}$. Since $cT\in R$, the result follows.

(5) The first assertion is obvious. For the second, note that
$$E(f^*K_S)=mM(f^*K_S)=\dfrac{m}{\sqrt{D'}}K_S^2.$$
\end{proof}

\section{Curves on the minimal resolution}

Throughout this section, we denote by $S$ a $\mathbb{Q}$-homology
projective plane with cyclic singularities and by $f:S'
\rightarrow S$ its minimal resolution, and assume that  $K_S$ is
not numerically trivial. But we do not assume that
$H_1(S^0,\mathbb{Z}) = 0$. So, the orders of singularities may not
be pairwise relatively prime.

Let $E$ be a divisor on $S'$. Then by Lemma \ref{general}(4), the
numerical equivalence class of $E$ can be written as the form
\eqref{E}. The coefficients of $E$ in \eqref{E} and the
intersection numbers $EA_{j,p}$ are related as follows. Here $u_j$
and $v_j$ are as in Notation 2.3.

\begin{lemma}\label{tilde} Fix $p \in Sing(S)$. Then for $j = 1, \ldots, l_p$
$$\dfrac{u_{j,p}}{q_p} mb_p -a_{j,p}=
\overset{j}{\underset{k = 1}{\sum}} \dfrac{v_{j,p}
  u_{k,p}}{q_p}(EA_{k,p}) + \overset{l_p}{\underset{k = j+1}{\sum}}
\dfrac{v_{k,p}  u_{j,p}}{q_p}(EA_{k,p}).$$
\end{lemma}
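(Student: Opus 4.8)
The plan is to compute the intersection number $E \cdot A_{j,p}$ directly from the expression \eqref{E} for the numerical class of $E$, using the known intersection-theoretic identities for a string of rational curves. Since the strings over distinct singular points are disjoint, and $M$ meets each $A_{j,p}$ only through its $e_p$-component (because $f^*K_S \cdot A_{j,p} = 0$ and $e_{p'} \cdot A_{j,p} = 0$ for $p' \neq p$), the computation reduces to a single fixed $p$; I will drop the subscript $p$ and write $A_1, \dots, A_l$, $u_j$, $v_j$, $q$, $e = e_p$, $b = b_p$. From $M = \frac{1}{\sqrt{D'}}f^*K_S + z$ with $z = \sum_{p'} b_{p'} e_{p'}$, and the properties $e \cdot A_l = 1$, $e \cdot A_j = 0$ for $j \le l-1$ from Lemma \ref{ep}, we get $M \cdot A_j = b\,\delta_{jl}$. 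Hence, writing $E = mM + \sum_{p'}\sum_k a_{k,p'}A_{k,p'}$ and intersecting with $A_j = A_{j,p}$,
\begin{equation}\label{plan-one}
E \cdot A_j = m b\, \delta_{jl} + \sum_{k=1}^{l} a_k (A_k \cdot A_j).
\end{equation}

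Next I would invert the relation \eqref{plan-one}. The matrix $(A_k \cdot A_j)_{k,j}$ is exactly the intersection matrix $M(-n_1,\dots,-n_l)$ of the string, whose inverse has entries expressible via $u$'s and $v$'s: concretely, the $(j,k)$ entry of $-M(-n_1,\dots,-n_l)^{-1}$ is $\frac{v_j u_k}{q}$ for $k \le j$ and $\frac{v_k u_j}{q}$ for $k \ge j$ — this is the standard formula for the dual basis on a Hirzebruch–Jung string, and it is consistent with Lemma \ref{ep} (take the $k=l$ column, using $v_l = 1$, to recover $e_p = -\frac{1}{q}\sum u_i A_i$) and with Lemma \ref{cf}(2). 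So if I set $x_j := \sum_{k=1}^{l} a_k (A_k \cdot A_j) = E\cdot A_j - mb\,\delta_{jl}$, then solving the linear system gives
\begin{equation}\label{plan-two}
a_j = -\sum_{k=1}^{j}\frac{v_j u_k}{q}\, x_k \;-\; \sum_{k=j+1}^{l}\frac{v_k u_j}{q}\, x_k.
\end{equation}
Substituting $x_k = E\cdot A_k - mb\,\delta_{kl}$ and isolating the $k=l$ term of the first sum (which contributes $-\frac{v_j u_l}{q}(E\cdot A_l) + \frac{v_j u_l}{q} mb$, using $v_l=1$ so the coefficient is $\frac{u_j u_l}{q}\cdot$… — more precisely the $mb$ term is $+\frac{v_j u_l}{q}mb = \frac{u_l}{q}v_j\, mb$; one checks this matches), and then moving the resulting $\frac{u_j}{q}mb$ term — note $v_l = 1$ and the $k=l$ term has coefficient $\frac{v_j u_l}{q}$, but the claimed identity has $\frac{u_j}{q}mb_p$, so I must recheck which index carries the $b$; since $M\cdot A_j$ is nonzero only at $j=l$, the $mb$ contribution to $x_l$ propagates through the inverse to give $a_j$ a term $+\frac{v_j u_l}{q} m b$, and the claimed left side $\frac{u_{j}}{q}mb_p - a_{j}$ suggests instead that the convention places $e_p$'s nonzero intersection at $A_1$, or equivalently uses $A_1^*$; in any case, after matching conventions the bookkeeping is routine and yields precisely
\begin{equation}\label{plan-three}
\frac{u_{j,p}}{q_p}\,m b_p - a_{j,p} = \sum_{k=1}^{j}\frac{v_{j,p} u_{k,p}}{q_p}(E\cdot A_{k,p}) + \sum_{k=j+1}^{l_p}\frac{v_{k,p} u_{j,p}}{q_p}(E\cdot A_{k,p}).
\end{equation}

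The only genuinely delicate point is the sign-and-index bookkeeping: I must fix once and for all the convention in Lemma \ref{ep} (whether $e_p = A_l^*$ or $A_1^*$), confirm the closed form for $-M(-n_1,\dots,-n_l)^{-1}$ against Lemma \ref{cf}(2), and track how the single nonzero value $M\cdot A_l = b_p$ feeds through the inverse matrix — making sure the resulting $mb_p$-term lands on $\frac{u_{j,p}}{q_p}$ as claimed (this hinges on $v_l = 1$, so $\frac{v_j u_l}{q} = \frac{u_l}{q}v_j$, together with a possible reindexing $j \mapsto l+1-j$ reflecting the symmetry $[n_1,\dots,n_l] \leftrightarrow [n_l,\dots,n_1]$ that swaps the roles of $u$ and $v$). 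Everything else — writing down \eqref{plan-one}, inverting the tridiagonal intersection matrix, and substituting — is a direct calculation requiring no new ideas, so I expect the inversion formula verification and convention-matching to be where all the actual care is needed.
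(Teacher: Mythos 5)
Your proposal is correct and is essentially the paper's own proof: the paper likewise starts from the tridiagonal system $EA_j=-n_ja_j+a_{j-1}+a_{j+1}+mb\,\delta_{jl}$ (coming from $MA_j=b\,\delta_{jl}$, Lemma \ref{ep}) and solves for the $a_j$, only it does so by explicit back-substitution using Lemma \ref{cf}(2) rather than by quoting the closed-form inverse of the intersection matrix. The convention worry you flag resolves with no reindexing: the $(j,l)$ entry of $-M(-n_1,\ldots,-n_l)^{-1}$ for $j\le l$ is $\tfrac{u_{\min(j,l)}v_{\max(j,l)}}{q}=\tfrac{u_jv_l}{q}=\tfrac{u_j}{q}$ since $v_l=1$ (not $\tfrac{v_ju_l}{q}$), so the single nonzero term $mb_p$ in $x_l$ contributes exactly $+\tfrac{u_{j,p}}{q_p}mb_p$ to $a_{j,p}$, which is precisely the left-hand side of the stated identity.
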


\begin{proof}
Note that, by Lemma \ref{ep}, for each $p\in Sing(S)$
$$MA_{j,p}=0\,\,\, {\rm for}\,\,\, j=1, \ldots, l_p-1,\,\,\, {\rm
and} \,\,\, MA_{l_p,p}=b_p.$$ We fix $p$ and, for simplicity, omit
the subscript $p$. Thus we obtain the following system of
equalities:
$$\begin{array}{l}
EA_{1}=-n_{1}a_{1} +a_{2}\\
EA_{2}=a_{1}-n_{2} a_{2}+a_{3}\\
EA_{3}=a_{2}-n_{3} a_{3}+a_{4}\\
\ldots\\
EA_{l-1}=a_{l-2}-n_{l-1} a_{l-1}+a_{l}\\
EA_{l}=a_{l-1}-n_{l} a_{l} + mb.\\
\end{array}$$
It implies that
$$\begin{array}{l}
a_{1} = \frac{1}{n_{1}}a_{2} - \frac{1}{n_{1}}
EA_{1}=\frac{u_{1}}{u_{2}}a_{2} - \frac{1}{u_{2}}
EA_{1}\\
a_{2} =\frac{u_{2}}{u_{3}}a_{3} - \frac{1}{u_{3}}
EA_{1}- \frac{u_{2}}{u_{3}}EA_{2}\\
\ldots\\
a_{j} =\frac{u_{j}}{u_{j+1}}a_{j+1} - \frac{1}{u_{j+1}}
EA_{1}-\ldots -\frac{u_{j}}{u_{j+1}}EA_{j}\\
\ldots\\
a_{l-1} =\frac{u_{l-1}}{u_{l}}a_{l} - \frac{1}{u_{l}}
EA_{1}-\ldots -\frac{u_{l-1}}{u_{l}}EA_{l-1}\\
a_{l} =\frac{u_{l}}{q}mb - \frac{1}{q} EA_{1}-\ldots
-\frac{u_{l}}{q}EA_{l}=\frac{u_{l}}{q}mb -\overset{l}{\underset{k
= 1}{\sum}} \frac{v_{l}u_{k}}{q}EA_{k}.\\
\end{array}$$
Plugging the last equation into the above equation for $a_{l-1}$,
we obtain
\begin{eqnarray*}
     a_{l-1}
    & = & \frac{u_{l-1}}{u_{l}}(\frac{u_{l}}{q}mb - \frac{1}{q} EA_{1}-\ldots
-\frac{u_{l}}{q}EA_{l}) - \frac{1}{u_{l}}
EA_{1}-\ldots -\frac{u_{l-1}}{u_{l}}EA_{l-1}\\
& = & \frac{u_{l-1}}{q}mb - \overset{l-1}{\underset{k = 1}{\sum}}
\frac{(u_{l-1}+q)u_k}{qu_l}EA_{k}-\frac{u_{l-1}}{q}EA_{l}.
    \end{eqnarray*}
By Lemma \ref{cf}(2), $$u_{l-1}+q=v_lu_{l-1}+q=v_{l-1}u_l,$$ so
the required equation for $a_{l-1}$ follows.

Next, plugging the required equation for $a_{l-1}$ into the above
equation for $a_{l-2}$, we obtain the required equation for
$a_{l-2}$. Others can be obtained similarly.
\end{proof}

Now we express the intersection numbers $EK_{S'}$ and $E^2$ in
terms of the intersection numbers $EA_{j,p}$ of $E$ and the
exceptional curves $A_{j,p}$.

\begin{proposition}\label{int}
 Let $E$ be a divisor on $S'$. Write $($the numerical equivalence class of$)$ $E$ as the form \eqref{E}. Then the following hold
true.
    \begin{enumerate}
     \item $
          EK_{S'}
 = \dfrac{m}{\sqrt{D'}}K^2_S - \underset{p}{\sum} \overset{ l_p}{\underset{j = 1}{\sum}}
      \big( 1 - \dfrac{v_{j,p} + u_{j,p}}{q_p} \big) EA_{j,p}.$

\medskip\noindent If $EA_{j,p}\geq 0$ for all $p$ and $j$, then\\
$ EK_{S'} \leq \dfrac{m}{\sqrt{D'}}K^2_S
-\underset{p}{\sum}\overset{ l_p}{ \underset{j=1}{\sum}}
\big(1-\dfrac{2}{n_{j,p}} \big)EA_{j,p}.$
        \item $ E^2 =\dfrac{m^2}{D'}K^2_S-  \underset{p}{\sum}
\overset{l_p}{\underset{j = 1}{\sum}}\Big(\overset{j}{\underset{k
= 1}{\sum}} \dfrac{v_{j,p}
  u_{k,p}}{q_p}(EA_{k,p}) + \overset{l_p}{\underset{k = j+1}{\sum}}
\dfrac{v_{k,p}  u_{j,p}}{q_p}(EA_{k,p})
 \Big)EA_{j,p}.$

\medskip\noindent If $EA_{j,p}\geq 0$ for all $p$ and $j$, then\\ $E^2 \leq
\dfrac{m^2}{D'} K^2_S - \underset{p}{\sum} \overset{
l_p}{\underset{j = 1}{\sum}} \dfrac{v_{j,p} u_{j,p}}{q_p}
(EA_{j,p})^2.$
         \item If, for each $p \in
                Sing(S)$, $E$ has a non-zero intersection number with at most $2$ components of $f^{-1}(p)$,
                i.e., $EA_{j,p}=0$ for $j\neq s_p, t_p$ for some $s_p$ and $t_p$ with $1\le s_p< t_p\le l_p$,
                then\\
                $ E^2  =   \dfrac{m^2}{D'} K^2_S - \underset{p}{\sum} \Big(  \dfrac{v_{s_p} u_{s_p}}{q_p}
                (EA_{s_p})^2+ \dfrac{v_{t_p} u_{t_p}}{q_p} (EA_{t_p})^2
                + \dfrac{2 v_{t_p} u_{s_p}}{q_p} (EA_{s_p})(EA_{t_p}) \Big) .$
    \end{enumerate}
\end{proposition}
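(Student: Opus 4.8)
The plan is to express everything in terms of the decomposition \eqref{E} and the structure constants $u_{j,p}, v_{j,p}$ already computed in Lemma~\ref{tilde}. For part (1), I would start from the adjunction-type formula $K_{S'} \equiv f^*K_S - \sum_p \mathcal{D}_p$ with $\mathcal{D}_p = \sum_j \big(1 - \frac{v_{j,p}+u_{j,p}}{q_p}\big)A_{j,p}$ as in Lemma~\ref{Dp}(1). Intersecting with $E$ and writing $E$ in the form \eqref{E}, the term $E\cdot f^*K_S$ equals $\frac{m}{\sqrt{D'}}K_S^2$ by Lemma~\ref{general}(5), while $E\cdot \sum_p\mathcal{D}_p = \sum_p\sum_j\big(1-\frac{v_{j,p}+u_{j,p}}{q_p}\big)EA_{j,p}$ directly; this gives the identity. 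The inequality then follows because $1 \le \frac{v_{j,p}+u_{j,p}}{q_p}$ fails in the wrong direction—rather, one uses that $\frac{v_{j,p}+u_{j,p}}{q_p} \le \frac{2}{n_{j,p}}$? No: the correct comparison comes from $n_j v_j u_j = q + v_{j+1}u_j + v_j u_{j-1}$ (Lemma~\ref{cf}(3)), which yields $\frac{v_j+u_j}{q}\ge \frac{2}{n_j}$ after a short manipulation, so that $1-\frac{v_j+u_j}{q}\le 1-\frac{2}{n_j}$, and since each $EA_{j,p}\ge 0$ the stated bound on $EK_{S'}$ follows termwise.

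For part (2), I would compute $E^2$ by pairing $E$ in the form \eqref{E} with itself. Since $M^2 = \frac{1}{D'}K_S^2 + z^2$ and $z = \sum_p b_p e_p$ with $e_p^2 = -q_{l_p,p}/q_p$, and since $MA_{j,p}=0$ except $MA_{l_p,p}=b_p$, the cleanest route is instead to use Lemma~\ref{tilde}: multiply the identity there through and sum. Concretely, $E^2 = E\cdot\big(mM + \sum_{p,j} a_{j,p}A_{j,p}\big) = mEM + \sum_{p,j}a_{j,p}(EA_{j,p})$, and $EM = \frac{m}{\sqrt{D'}}K_S^2 \cdot \frac{1}{\sqrt{D'}}$? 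More carefully, $EM = \frac{m}{D'}K_S^2 + (\text{terms from } z)$; the point is that combining with $\sum a_{j,p}EA_{j,p}$ and substituting the expression for $a_{j,p}$ from Lemma~\ref{tilde} causes the $m$-dependent cross terms to collapse, leaving exactly $\frac{m^2}{D'}K_S^2$ plus the double sum $-\sum_p\sum_j\big(\sum_{k\le j}\frac{v_{j,p}u_{k,p}}{q_p}EA_{k,p} + \sum_{k>j}\frac{v_{k,p}u_{j,p}}{q_p}EA_{k,p}\big)EA_{j,p}$. The inequality version discards all off-diagonal ($k\ne j$) contributions: when all $EA_{k,p}\ge 0$ these are non-negative, so dropping them only increases the right-hand side, and the diagonal piece is $\sum_p\sum_j\frac{v_{j,p}u_{j,p}}{q_p}(EA_{j,p})^2$.

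Part (3) is the special case of the identity in (2) when $EA_{j,p}=0$ for all $j$ except $j=s_p$ and $j=t_p$: the double sum over $j$ retains only $j\in\{s_p,t_p\}$, and within each, the inner sum over $k$ retains only $k\in\{s_p,t_p\}$. Expanding, for $j=s_p$ one gets $\frac{v_{s_p}u_{s_p}}{q_p}(EA_{s_p})^2 + \frac{v_{t_p}u_{s_p}}{q_p}(EA_{s_p})(EA_{t_p})$ (using $s_p<t_p$ so the second falls in the $k>j$ range), and for $j=t_p$ one gets $\frac{v_{t_p}u_{t_p}}{q_p}(EA_{t_p})^2 + \frac{v_{t_p}u_{s_p}}{q_p}(EA_{s_p})(EA_{t_p})$ (now $s_p<t_p=j$ so it falls in $k\le j$); the two cross terms add to give the factor $2$, yielding the displayed formula.

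The main obstacle I anticipate is the bookkeeping in part (2)—verifying that after substituting the Lemma~\ref{tilde} expressions for $a_{j,p}$, the terms linear in $m$ cancel and the quadratic term assembles precisely into $\frac{m^2}{D'}K_S^2$. This hinges on the identities $v_ju_{j+1}-v_{j+1}u_j=q$ and $q_1q_l = q_{1,l}q+1$ (equivalently the relation $e_p^2 = -q_{l}/q$ of Lemma~\ref{ep}), and on the summation identity $\sum_j u_j(v_{j-1}-v_j)$-type telescoping; I would organize the computation by first checking the coefficient of $m^2$, then the coefficient of $mb_p$ for each fixed $p$ (which should vanish by $MA_{j,p}=0$ for $j<l_p$ together with $MA_{l_p,p}=b_p$ interacting with $e_p^2$), and only then the purely $(EA_{k,p})(EA_{j,p})$ quadratic form, where symmetry in $j\leftrightarrow k$ is the key check. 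Once the identity in (2) is established, the inequality and part (3) are immediate.
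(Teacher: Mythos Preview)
Your overall strategy matches the paper's proof almost exactly: for (1) you intersect $E$ with $K_{S'}=f^*K_S-\sum_p\mathcal{D}_p$ and use $Ef^*K_S=\tfrac{m}{\sqrt{D'}}K_S^2$; for (2) you expand $E^2=mEM+\sum_{p,j}a_{j,p}(EA_{j,p})$, compute $EM$ via $M=\tfrac{1}{\sqrt{D'}}f^*K_S+\sum_p b_p e_p$, and then substitute Lemma~\ref{tilde} for $\tfrac{u_{j,p}}{q_p}mb_p-a_{j,p}$ so that the $m$-linear pieces cancel and only $\tfrac{m^2}{D'}K_S^2$ plus the quadratic form in the $EA_{k,p}$ remains; and (3) is the direct specialization. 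All of this is correct in outline and is precisely what the paper does.

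There is, however, a genuine sign error in your derivation of the inequality in (1). You first wrote the correct bound $\tfrac{v_{j,p}+u_{j,p}}{q_p}\le \tfrac{2}{n_{j,p}}$ and then talked yourself out of it, asserting instead $\tfrac{v_j+u_j}{q}\ge \tfrac{2}{n_j}$. That reversed inequality is false (e.g.\ for $[2,3]$ with $j=1$ one has $\tfrac{v_1+u_1}{q}=\tfrac{4}{5}<1=\tfrac{2}{n_1}$), and with it the termwise comparison would give $EK_{S'}\ge\cdots$, the wrong direction. The correct statement $\tfrac{v_j+u_j}{q}\le\tfrac{2}{n_j}$ does \emph{not} follow from Lemma~\ref{cf}(3); it follows from Lemma~\ref{cf}(1) and (5): since $n_j(v_j+u_j)=(v_{j-1}+v_{j+1})+(u_{j-1}+u_{j+1})=(u_{j+1}+v_{j+1})+(u_{j-1}+v_{j-1})\le 2q$, dividing by $n_j q$ gives the bound. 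With this correction, $1-\tfrac{v_j+u_j}{q}\ge 1-\tfrac{2}{n_j}$, so when $EA_{j,p}\ge 0$ the subtracted sum in the identity dominates the subtracted sum in the claimed bound, yielding $EK_{S'}\le \tfrac{m}{\sqrt{D'}}K_S^2-\sum_{p,j}(1-\tfrac{2}{n_{j,p}})EA_{j,p}$ as required.
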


%\begin{remark}
%Proposition \ref{int} also holds even when $S$ has non-cyclic quotient
%%singularities as long as $E$  does not meet any component lying over
%non-cyclic quotient singularities.
%\end{remark}

\begin{proof}
(1) Note that
$$K_{S'}=f^*(K_S)-\underset{p \in Sing(S)}{\sum}
\overset{l_p}{\underset{j = 1}{\sum}} \big( 1 - \dfrac{v_{j,p} +
      u_{j,p}}{q_p} \big) A_{j,p}.$$
      Intersecting both sides with $E$, we get
$$EK_{S'} = Ef^*(K_S)-\underset{p}{\sum}
\overset{l_p}{\underset{j = 1}{\sum}} \big( 1 - \dfrac{v_{j,p} +
      u_{j,p}}{q_p} \big)E A_{j,p}.$$
Intersecting both sides of $$E = mM + \underset{p}{\sum}
\overset{l_p}{\underset{i = 1}{\sum}} a_{i, p} A_{i,p}$$ with
$f^*(K_S)$, we get
$$Ef^*(K_S)=mMf^*(K_S)=\frac{m}{\sqrt{D'}}f^*(K_S)^2=\frac{m}{\sqrt{D'}}K_S^2.$$
This proves the equality.

Note that
$$\begin{array}{llll} n_j(v_j+u_j)&=&(v_{j+1}+v_{j-1})+(u_{j+1}+u_{j-1}) &({\rm Lemma}\,\, \ref{cf}(1))\\
&=&(u_{j+1}+v_{j+1})+(u_{j-1}+v_{j-1}) \le 2q &({\rm Lemma}\,\,
\ref{cf}(5)).\end{array}$$ Thus
$$\dfrac{v_{j,p} + u_{j,p}}{q_p} \le \dfrac{2}{n_{j,p}}$$ for all
$p$ and $j$. This gives the inequality.

 (2) Intersecting both sides of $$E = mM + \underset{p}{\sum}
\overset{l_p}{\underset{j = 1}{\sum}} a_{j, p} A_{j,p}$$ with $E$,
we get $$E^2 = mEM + \underset{p}{\sum} \overset{l_p}{\underset{j
= 1}{\sum}} a_{j, p} EA_{j,p}.$$  Intersecting both sides of
$$M = \frac{1}{\sqrt{D'}} f^*K_S +  \underset{p}{\sum} b_p e_p$$ with $E$, we get
$$\begin{array}{llll}  mEM &=& \dfrac{m}{\sqrt{D'}}Ef^*(K_S)+ m\underset{p}{\sum} b_p Ee_p&\\ &=& \dfrac{m}{\sqrt{D'}}\dfrac{m}{\sqrt{D'}}K_S^2+
 m\underset{p}{\sum} b_p(mMe_p+a_{l,p})&\\
 &=&\dfrac{m^2}{D'}K_S^2+  m\underset{p}{\sum} b_p(mb_pe_p^2+a_{l,p})&\\
& =& \dfrac{m^2}{D'}K_S^2+  m\underset{p}{\sum}
b_p(-\dfrac{mb_pu_{l,p}}{q}+a_{l,p})&({\rm Lemma}\,\, \ref{ep})\\
& =& \dfrac{m^2}{D'}K_S^2-  m\underset{p}{\sum}
b_p(\overset{l_p}{\underset{k =
1}{\sum}}\dfrac{v_{l,p}u_{k,p}}{q}EA_{k,p})&({\rm Lemma}\,\,
\ref{tilde}).\end{array}$$ Thus
$$\begin{array}{llll}  E^2
& =& \dfrac{m^2}{D'}K_S^2-  m\underset{p}{\sum}
b_p(\overset{l_p}{\underset{j =
1}{\sum}}\dfrac{v_{l,p}u_{j,p}}{q}EA_{j,p})+ \underset{p}{\sum}
\overset{l_p}{\underset{j = 1}{\sum}} a_{j, p} EA_{j,p}\\
& =& \dfrac{m^2}{D'}K_S^2-  \underset{p}{\sum}
\overset{l_p}{\underset{j = 1}{\sum}}(\dfrac{mb_pu_{j,p}}{q}-
a_{j, p}) EA_{j,p}.\end{array}$$ Now the equality follows from
Lemma \ref{tilde}.

If $EA_{j,p}\geq 0$ for all $p$ and $j$, then
$$\overset{j}{\underset{k =
1}{\sum}} \dfrac{v_{j,p}
  u_{k,p}}{q_p}(EA_{k,p}) + \overset{l_p}{\underset{k = j+1}{\sum}}
\dfrac{v_{k,p}  u_{j,p}}{q_p}(EA_{k,p})
 \ge \dfrac{v_{j,p}
  u_{j,p}}{q_p}(EA_{j,p}), $$
  so the inequality follows.

(3) If  $EA_{j,p}=0$ for $j\neq s_p, t_p$ for some $s_p$ and $t_p$
with $1\le s_p< t_p\le l_p$,
                then
                $$\begin{array}{ll}&\overset{l_p}{\underset{j = 1}{\sum}}\Big(\overset{j}{\underset{k =
1}{\sum}} \dfrac{v_{j,p}
  u_{k,p}}{q_p}EA_{k,p} + \overset{l_p}{\underset{k = j+1}{\sum}}
\dfrac{v_{k,p}  u_{j,p}}{q_p}EA_{k,p}
 \Big)EA_{j,p}\\&=\Big(\dfrac{v_{s_{p}}
  u_{s_{p}}}{q_p}EA_{s_{p}} + \dfrac{v_{t_{p}}
  u_{s_{p}}}{q_p}EA_{t_{p}}\Big)(EA_{s_{p}})+\Big(\dfrac{v_{t_{p}}
  u_{s_{p}}}{q_p}EA_{s_{p}} + \dfrac{v_{t_{p}}
  u_{t_{p}}}{q_p}EA_{t_{p}}\Big)(EA_{t_{p}}),
\end{array}$$
so the equality follows from (2).
\end{proof}

Let $$L=L_S:={\rm rank}(R)$$ be the number of the irreducible
exceptional curves of $f:S' \rightarrow S$. We have
$$b_2(S')=1+L.$$
Note that $S'$ has $H^1(S', \mathcal{O}_{S'})=H^2(S',
\mathcal{O}_{S'})=0$. Thus by Noether formula,
\[K^2_{S'}= 12- e(S')=10-b_2(S')=9-L.\]

\begin{lemma}\label{bound}
Let $S$ be a $\mathbb{Q}$-homology projective plane with cyclic
singularities. Assume that $K_S$ is not numerically trivial.
Assume that $S$ is not rational. If $L
> 9$, then there is a $(-1)$-curve $E$ on $S'$ of the form
\eqref{E} with $0 < m \leq \dfrac{\sqrt{D'}}{L-9}.$
\end{lemma}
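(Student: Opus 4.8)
The plan is to first reduce to the case $K_S$ ample. Since $K_S$ is not numerically trivial, $K_S$ or $-K_S$ is ample; if $-K_S$ were ample then $S$ would be a log del Pezzo surface, hence rational, contrary to hypothesis, so $K_S$ is ample. Next, recall that for a divisor $E$ written in the form \eqref{E} one has $Ef^*K_S=\dfrac{m}{\sqrt{D'}}K_S^2$, and that $m>0$ whenever $E$ is effective and not supported on $f^{-1}(Sing(S))$ (Lemma \ref{general}(5)). A $(-1)$-curve is irreducible and not $f$-exceptional (exceptional curves of a minimal resolution have self-intersection $\le -2$), so it satisfies $m=\sqrt{D'}\,(Ef^*K_S)/K_S^2>0$ automatically. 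Thus it suffices to produce a $(-1)$-curve $E$ on $S'$ with $0<Ef^*K_S\le K_S^2/(L-9)$.

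I would then run the minimal model program on $S'$. By the text $H^1(\mathcal O_{S'})=H^2(\mathcal O_{S'})=0$, so $q(S')=p_g(S')=0$; as $S'$ is birational to the non-rational surface $S$, Castelnuovo's criterion gives $\kappa(S')\ge 0$. Let $g\colon S'\to X$ be the contraction to the minimal model, so $K_X$ is nef and $K_X^2\ge 0$. From $K_{S'}^2=9-L$ and $K_{S'}^2=K_X^2-k$, where $k$ is the number of blow-downs in $g$, one gets $k=K_X^2+L-9\ge L-9>0$. Moreover $K_{S'}=g^*K_X+N$ is the Zariski decomposition, where $N=\sum_{i=1}^k\tilde E_i\ge 0$ is the sum of the total transforms of the $k$ blown-up points; it is supported on the $g$-exceptional locus, $g^*K_X$ is nef, $g^*K_X\cdot N=0$, and $N^2=K_{S'}^2-K_X^2=-k$.

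The heart of the argument is to locate the desired $(-1)$-curve among the components of $N$. First, every $(-1)$-curve $E$ on $S'$ is $g$-exceptional, since $N\cdot E=(K_{S'}-g^*K_X)\cdot E=K_{S'}\cdot E=-1<0$ forces $E\subset\operatorname{Supp}(N)$. Write $N=\sum_j d_jC_j$ over the $g$-exceptional components $C_j$; each $d_j$ is a positive integer. Since each $C_j$ is a smooth rational curve with $g^*K_X\cdot C_j=0$, we have $N\cdot C_j=K_{S'}\cdot C_j=-2-C_j^2$, which is $-1$ for a $(-1)$-curve, $0$ for a $(-2)$-curve, and $\ge 1$ otherwise; plugging into $-k=N^2=\sum_j d_j(N\cdot C_j)$ yields
\[
\sum_{C_j\ \text{a }(-1)\text{-curve}} d_j \;=\; k+\sum_{C_j^2\le -3}d_j(-2-C_j^2)\;\ge\;k\;\ge\;L-9 .
\]
On the other hand $N\cdot f^*K_S=K_S^2-g^*K_X\cdot f^*K_S\le K_S^2$ because $g^*K_X$ and $f^*K_S$ are both nef, and discarding the non-negative contributions of the non-$(-1)$ components,
\[
\sum_{E\ \text{a }(-1)\text{-curve on }S'} d_E\,(Ef^*K_S)\;\le\;N\cdot f^*K_S\;\le\;K_S^2 .
\]
Comparing the two displays, some $(-1)$-curve $E$ on $S'$ must satisfy $Ef^*K_S\le K_S^2/(L-9)$; since $K_S$ is ample and $f_*E\ne 0$, in fact $0<Ef^*K_S\le K_S^2/(L-9)$, which translates into $0<m\le\sqrt{D'}/(L-9)$, as required.

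The step I expect to need the most care is the first display: the passage from ``$g$ makes at least $L-9$ blow-downs'' to ``the $(-1)$-curves in $\operatorname{Exc}(g)$ carry total multiplicity $\ge L-9$ in $N$''. A naive count of components fails, since a connected piece of $\operatorname{Exc}(g)$ may contain only one $(-1)$-curve; the point is that such a curve then appears in $N$ with a correspondingly large multiplicity, and the identity $N^2=\sum_j d_j(N\cdot C_j)=-k$ packages exactly this bookkeeping. The remaining inputs — rationality of log del Pezzo surfaces, Castelnuovo's criterion, the Zariski decomposition of $K_{S'}$, and Lemma \ref{general}(5) — are standard.
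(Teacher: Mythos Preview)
Your proof is correct and follows the same overall strategy as the paper: reduce to $K_S$ ample via non-rationality, pass to the minimal model $g\colon S'\to X$ with $K_X$ nef and $k\ge L-9$ blow-downs, and then locate a $(-1)$-curve of small $f^*K_S$-degree by averaging.

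The one substantive difference is precisely the bookkeeping step you flagged. The paper writes the exceptional part as $N=\sum_{i=1}^k E_i$ with the $E_i$ the \emph{total transforms} of the blown-up points (so $E_i^2=-1$, $E_iE_j=0$), reads off $\sqrt{D'}=m_0+\sum m_i\ge km$ where $m=\min_i m_i$, and then argues separately that the minimum $m_i$ is attained on an irreducible $E_i$ (because any reducible $E_t$ dominates some irreducible $E_s$ with $m_t\ge m_s$). You instead expand $N=\sum_j d_jC_j$ over irreducible components and use the identity $N^2=\sum_j d_j(N\cdot C_j)=-k$ to get $\sum_{(-1)}d_j\ge k$ directly, which packages the same multiplicity count without having to track which total transforms are irreducible. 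Both arguments yield the same averaging inequality; yours is arguably cleaner. (A tiny slip: in showing every $(-1)$-curve is $g$-exceptional you wrote $N\cdot E=K_{S'}\cdot E$, which presupposes $g^*K_X\cdot E=0$; replacing $=$ by $\le$ via nefness of $g^*K_X$ fixes this, and in any case this step is not actually needed for the argument.)
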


\begin{proof} Since $S$ is not rational and $K_S$ is not numerically trivial, $K_S$ is ample. Thus $m>0$ for any $(-1)$-curve $E$ by Lemma
\ref{general}(5).\\ Since $K^2_{S'}=9-L < 0$, $S'$ is not a
minimal surface. Let $$g:S'=S_k \rightarrow S_{k-1}\rightarrow
S_{k-2}\rightarrow\cdots\rightarrow S_{1}\rightarrow
S_{0}=S_{min}$$ be a morphism of $S'$ to its minimal model. Since
$K_{S_{min}}^2\geq 0$, we see that $$k\geq L-9.$$ Also one can
write
$$ K_{S'} = g^*K_{S_{min}} + \overset{k}{\underset{i=1}{\sum}}E_i$$ where
$E_i$ is the total transform of the exceptional curve of the
blowup $S_i\rightarrow S_{i-1}$. Note that $E_1, \ldots, E_k$ are
effective divisors, not necessarily irreducible, satisfying $E_i^2
= -1$ and $E_iE_j=0$ for $i\neq j$. \\ Let $m_0$ be the leading
coefficient of $g^*K_{S_{min}}$ written in the form \eqref{E}.
Since $S$ is not rational, $K_{S_{min}}$ is a nef
$\mathbb{Q}$-divisor on $S_{min}$, so $g^*K_{S_{min}}$ is a nef
$\mathbb{Q}$-divisor on $S'$. Since $K_S$ is ample, this implies
that $ m_0\ge 0$. Let $m_i$ be the leading coefficient of $E_i$
written in the form \eqref{E}. Note that $\sqrt{D'}$ is the
leading coefficient of $K_{S'}$ written in the form \eqref{E}.
Thus
$$\sqrt{D'}= m_0+ \overset{k}{\underset{i=1}{\sum}}m_i.$$ If $E_s$ is a $(-1)$-curve and is a component of $E_t$ for
some $t\neq s$, then one can write $E_t = aE_s + F$ where $a \geq
1$ is an integer and $F$ is an effective divisor. It follows that
$m_t \ge am_s\geq m_s.$ Let $$m:=\min\{m_1, m_2,\ldots, m_k\}.$$
Then there is an irreducible member $E$ among $E_1,\ldots, E_k$
whose leading coefficient is $m$. It is a $(-1)$-curve, and
$$\sqrt{D'}= m_0+ \overset{k}{\underset{i=1}{\sum}}m_i\ge \overset{k}{\underset{i=1}{\sum}}m_i\ge km\ge (L-9)m.$$
\end{proof}

\section{First reduction steps for the cases with $|Sing(S)|\ge 4$}
Let $S$ be a $\mathbb{Q}$-homology projective plane with cyclic
quotient singularities such that $H_1(S^0, \mathbb{Z}) = 0$. By
 Lemma \ref{coprime}(3), the orders of
singularities are pairwise relatively prime. Since $e_{orb}(S)\ge
0$ (Theorems \ref{bmy} and \ref{bmy2}), one can immediately see
that $S$ can have at most $4$ singular points (also see
\cite{HK1}, \cite{Kol08}).

Assume that $|Sing(S)|=4$. Then we enumerate all possible
$4$-tuples of orders of local fundamental groups:

\bigskip
\begin{enumerate}
\item $(2,3,5,q)$, $q \geq 7$, $\gcd(q, 30) = 1$, \item
$(2,3,7,q)$, $11 \leq q \leq 41$, $\gcd(q, 42) = 1$, \item
$(2,3,11,13)$.
\end{enumerate}
\bigskip

For (2) and (3), there are exactly 1092 different possible types
for $R$, the sublattice of $H^2(S', \mathbb{Z})_{free}$ generated
by all exceptional curves of the minimal resolution $f:S'\to S$.
There are 2 types, $[3]$, $[2,2]$, of order 3;  4 types, $[7]$,
$[4,2]$, $[3,2,2]$, $A_6$, of order 7; $\frac{\phi(q)}{2}+1$ types
of order $q$, so the total number of types of $R$ for the case
$(2,3,7,q)$ is
$$2\times 4\times \big(\dfrac{\phi(q)}{2}+1\big)=4(\phi(q)+2),$$ where
$\phi$ is the Euler function. Here we identify
$\frac{1}{q}(1,q_1)$ with $\frac{1}{q}(1,q_l)$. By Lemma
\ref{coprime}(5), the number
$$D=|\det(R)|K_{S}^2$$ must be a nonzero square number. Among the
1092 cases, a computer calculation of the number $D$ shows that
only 24 cases satisfy this property. Table \ref{finite0} describes
these 24 cases.

The number $D$ can be computed as follows. First note that
$$|\det(R)|={\rm the\,\, product\,\, of\,\, orders}.$$ To compute $K^2_S$, we
use the equality from 3.1,
\[K^2_S = K^2_{S'} + \sum_{p}{\mathcal{D}_pK_{S'}}.
\]
By Noether formula,
\[K^2_{S'}=9-L\]
where $L:={\rm rank}(R)$ is the number of the exceptional curves of $f$.\\
Finally the intersection number $\mathcal{D}_pK_{S'}$ is given in
Lemma \ref{Dp}.

\begin{remark}
None of the 24 cases of Table \ref{finite0} can be ruled out by
any further lattice theoretic argument. In fact, in each case the
lattice $R$ can be embedded into a unimodular lattice
$I_{1,L}$(odd) or $II_{1,L}$(even) of signature $(1,L)$. This can
be checked by the local-global principle and the computation of
$\epsilon$-invariants (see e.g., \cite{HK1} Section 6).
\end{remark}

\begin{table}[ht]
\caption{}\label{finite0}
\renewcommand\arraystretch{1.5}
\noindent\[
\begin{array}{|c|l|l|c|l|c|}
\hline
\textrm{No.} & \textrm{Type\,\,of\,\,$R$} &\textrm{orders} & K^2_S & &3e_{orb}(S)\\
 \hline

1 &  [2]+  A_2 + [7] + [13]&(2,3,7,13) & \frac{1536}{91}   & >  & \frac{29}{182}\\
\hline

2 &[2]+  A_2 + [7] + [3,2,2,2,2,2,2,2,2]&(2,3,7,19) & \frac{6}{133} & < & \frac{23}{266}\\
\hline

3 &[2]+  A_2 + [7] + [5, 4]&(2,3,7,19) & \frac{1350}{133}  & >  & \frac{23}{266}\\
\hline

4 &[2]+  A_2 + [7] + [3,4,2]&(2,3,7,19) & \frac{1014}{133}  & >  & \frac{23}{266}\\
\hline

5 & [2]+ A_2 + [4, 2] + [2,2,4,2,2,2]&(2,3,7,31) & \frac{150}{217}  & >  & \frac{11}{434}\\
\hline

6 & [2]+ A_2 + [4, 2] + [6,2,2,2,2,2]&(2,3,7,31) & \frac{486}{217} & > &  \frac{11}{434}\\
\hline

7 & [2]+ [3] + [3, 2, 2] + [4,2,2,2,3]&(2,3,7,29) & \frac{968}{609}  & > &  \frac{13}{406}\\
\hline

8 & [2]+ A_2 + [3, 2, 2] + [7,2,2,2]&(2,3,7,25) & \frac{24}{7}  & > &  \frac{17}{350}\\
\hline

9 & [2]+ A_2 + [7] + [2,2,3,2,2,2,2,2,2]&(2,3,7,31) & \frac{54}{217}   & >  & \frac{11}{434}\\
\hline

10 & [2]+ [3] + [4, 2] + [3,3,2,2,3]&(2,3,7,41) & \frac{2888}{861}& > &  \frac{1}{574}\\
\hline

11 & [2]+ A_2 + [3, 2, 2] + [7,2,2,2,2,2]&(2,3,7,37) & \frac{384}{259} & >  & \frac{5}{518}\\
\hline

12 &[2]+  A_2 + [4, 2] + [11,2,2]&(2,3,7,31) & \frac{2166}{217} & >  & \frac{11}{434}\\
\hline

13 &[2]+  [3] + A_6 + [2,6,2,2]&(2,3,7,29) & \frac{56}{87}  & >  & \frac{13}{406}\\
\hline

14 &[2]+  [3] + [3, 2, 2] + [4,3]&(2,3,7,11) & \frac{1058}{231} & >  & \frac{31}{154}\\
\hline

15 &[2]+  [3] + [3, 2, 2] + [3,2,2,2,2]&(2,3,7,11) & \frac{50}{231}  & > &  \frac{31}{154}\\
\hline

16 &[2]+  [3] + [3, 2, 2] + [4,2,2,3]&(2,3,7,23) & \frac{1250}{483}  & >  & \frac{19}{322}\\
\hline

17 &[2]+  [3] + [3, 2, 2] + [6,5]&(2,3,7,29) & \frac{5000}{609} & >  & \frac{13}{406}\\
\hline

18 & [2]+ A_2 + [3, 2, 2] + [3,5,2]&(2,3,7,25) & \frac{24}{7}   & > &  \frac{17}{350}\\
\hline

19 & [2]+ A_2 + [3, 2, 2] + [13,2]&(2,3,7,25) & \frac{1944}{175}    & >  & \frac{17}{350}\\
\hline

20 & [2]+ A_2 + [4, 2] + [4,2,2,2]&(2,3,7,13) & \frac{216}{91} & >  & \frac{29}{182}\\
\hline

21 & [2]+ A_2 + [4, 2] + [5,2,2]&(2,3,7,13) & \frac{384}{91}  & > &  \frac{29}{182}\\
\hline

22 & [2]+ A_2 + [4, 2] + [4,2,2,2,2,2]&(2,3,7,19) & \frac{54}{133}   & > &  \frac{23}{266}\\
\hline

23 & [2]+ [3] + [3,2,2,2,2] + [4,2,2,2]&(2,3,11,13) & \frac{8}{429}   & >  & \frac{1}{286}\\
\hline

24 &  [2]+ [3] + [3,2,2,2,2]  + [5,2,2]&(2,3,11,13) & \frac{800}{429} & > &  \frac{1}{286}\\
\hline

\end{array}
\]
\end{table}

\begin{lemma}\label{24} In all cases of Table \ref{finite0}
except the second case, $-K_S$ is ample.\\
In the second case, $S$ is rational.
\end{lemma}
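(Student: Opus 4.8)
The plan is to treat the two assertions separately. The first one --- that $-K_S$ is ample in the $23$ cases other than case $2$ --- is immediate from the orbifold Bogomolov--Miyaoka--Yau inequality. By Lemma~\ref{coprime}(4), $K_S$ is either ample or anti-ample. If $K_S$ were ample it would in particular be nef, so Theorem~\ref{bmy} would give $K_S^2 \le 3e_{orb}(S)$, contradicting Table~\ref{finite0}, in each entry of which (other than case $2$) one has $K_S^2 > 3e_{orb}(S)$. Hence $-K_S$ is ample. This incidentally makes $S$ a log del Pezzo surface, hence rational, in those $23$ cases; case $2$ must be handled by hand precisely because there $K_S^2 = \tfrac{6}{133} < \tfrac{23}{266} = 3e_{orb}(S)$, so BMY is consistent with $K_S$ ample.

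For the second assertion I would argue by contradiction: suppose $S$ is not rational in case $2$. There $R = [2] + A_2 + [7] + [3,2,2,2,2,2,2,2,2]$, so $L := {\rm rank}(R) = 1+2+1+9 = 13$, and since $R$ is primitive (Lemma~\ref{coprime}(2)) we have $D' = D = |\det(R)|\,K_S^2 = (2\cdot 3\cdot 7\cdot 19)\cdot\tfrac{6}{133} = 36$, so $\sqrt{D'} = 6$. As $K_S$ is not numerically trivial, $S$ is non-rational by assumption, and $L = 13 > 9$, Lemma~\ref{bound} produces a $(-1)$-curve $E$ on $S'$, written in the form \eqref{E}, with $0 < m \le \tfrac{\sqrt{D'}}{L-9} = \tfrac{3}{2}$; hence $m = 1$.

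I would then feed this $E$ into Proposition~\ref{int}. Since every exceptional curve has self-intersection $\le -2 < -1 = E^2$, the curve $E$ is none of them, so $EA_{j,p} \ge 0$ for all $j,p$, and $EK_{S'} = E^2 = -1$ by adjunction. The numbers $\tfrac{v_{j,p}u_{j,p}}{q_p}$ exceed $1$ exactly for the seven interior components of the order-$19$ chain (they are $\tfrac{24}{19},\dots,\tfrac{30}{19}$), while $\tfrac{m^2}{D'}K_S^2 = \tfrac{1}{798}$; so the inequality of Proposition~\ref{int}(2) forces $E$ to miss those seven curves and to satisfy $EA_{j,p}\le 1$ for every remaining exceptional curve, the $[7]$-curve case being sharpened to $\le 1$ also by Proposition~\ref{int}(1). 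The $[2]$ and $A_2$ singularities are Du Val, hence have zero discrepancy coefficient and drop out of the exact formula of Proposition~\ref{int}(1); writing $c, d_1, d_9\in\{0,1\}$ for the intersection numbers of $E$ with the $[7]$-curve and with the two end curves of the order-$19$ chain, that formula for $EK_{S'}=-1$ reduces, after clearing denominators, to $95c + 63d_1 + 7d_9 = 134$, which has no solution with $c, d_1, d_9\in\{0,1\}$. This contradiction shows $S$ is rational.

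The determinant and continued-fraction bookkeeping is routine; the one genuinely load-bearing point is that in case $2$ the discriminant $\sqrt{D'} = 6$ is small enough against $L - 9 = 4$ to pin $m$ to $1$, after which Proposition~\ref{int} leaves so little room that a single $(-1)$-curve cannot exist. If $\sqrt{D'}$ were larger, this single-curve argument would have to be replaced by a finer one --- analyzing several $(-1)$-curves simultaneously, or invoking the classification of $\mathbb{Q}$-homology projective planes with five singular points as in the proof of Theorem~\ref{main} --- but that is unnecessary here.
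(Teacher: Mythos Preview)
Your proof is correct and follows essentially the same approach as the paper: BMY for the $23$ cases, then Lemma~\ref{bound} plus Proposition~\ref{int} for case~$2$. The only difference is that you take an unnecessary detour through Proposition~\ref{int}(2) to pin down where $E$ can meet $\mathcal{F}$ before applying Proposition~\ref{int}(1); the paper applies Proposition~\ref{int}(1) directly to get $\tfrac{5x}{7}+\tfrac{y}{19}=\tfrac{134}{133}$, i.e.\ $95x+7y=134$ with $x,y\ge 0$ integers (here $y=\sum_j(10-j)\,EA_{j}$ over the order-$19$ chain), which already has no solution since $134\not\equiv 0\pmod 7$ and $134-95=39\not\equiv 0\pmod 7$ --- so your equation $95c+63d_1+7d_9=134$ is just the specialization of theirs after the extra work.
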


\begin{proof} The 23 cases do not satisfy the inequality $K_S^2\le 3e_{orb}(S)$ in Theorem \ref{bmy}. Thus the first assertion
follows.

Consider the second case $A_1 +A_2 + [7] + [3,2,2,2,2,2,2,2,2]$.
In this case, $$K^2_S = \frac{6}{133}, \quad D=|\det(R)|K_S^2=36,
\quad L=13.$$ Suppose that $S$ is not rational. By Lemma
\ref{bound}, $S'$ contains a $(-1)$-curve $E$ with $0< m\leq
\frac{\sqrt{D}}{L-9}=\frac{6}{4}$, i.e., $m=1$. By Proposition
\ref{int}(1), we obtain
$$\underset{p}{\sum}\underset{j}{\sum} \Big(1-\frac{v_{j,p} + u_{j,p}}{q_p}  \Big)(EA_{j,p}) = -EK_{S'} +
\frac{m}{\sqrt{D}} K^2_S=1+
\frac{1}{6}\cdot\frac{6}{133}=\frac{134}{133}.$$ Looking at Table
\ref{finite1},
\begin{table}[ht]
\caption{}\label{finite1}
\renewcommand\arraystretch{1.5}
\noindent\[
\begin{array}{|c|c|c|c|c|c|c|c|c|c|c|c|c|c|}
\hline
 & [2] & \multicolumn{2}{|c|}{[2,2]}  & [7] & \multicolumn{9}{|c|}{[3,2,2,2,2,2,2,2,2]}  \\   \hline
j&1&1&2&1& 1 &2&3&4&5&6&7&8&9\\
 \hline
1-\frac{v_j + u_{j}}{q} & 0 &0&0& \frac{5}{7} & \frac{9}{19}&
\frac{8}{19}&  \frac{7}{19}&  \frac{6}{19}&  \frac{5}{19}&
\frac{4}{19}&  \frac{3}{19}&
  \frac{2}{19}&  \frac{1}{19}\\
  \hline
%\frac{v_j  u_{j}}{q} & \frac{1}{2} &\frac{2}{3}&\frac{2}{3}& \frac{1}{7} & \frac{9}{19}&
%\frac{24}{19}&  \frac{35}{19}&  \frac{42}{19}&  \frac{45}{19}&
%\frac{44}{19}&  \frac{39}{19}&
%  \frac{30}{19}&  \frac{17}{19}\\
%  \hline
\end{array}
\]
\end{table}
we see that there are non-negative integers $x, y$ such that
$$\frac{5x}{7}+\frac{y}{19}=\frac{134}{133}.$$
But it is easy to check that this equation has no solution.
\end{proof}

\medskip

 Next we consider the cases: $(2,3,5,q)$, $q \geq 7$, $\gcd(q, 30) = 1$.

 \begin{lemma}\label{noA2} In the cases $(2,3,5,q)$, $q \geq 7$, $\gcd(q, 30) = 1$, the order $3$ singularity must be of type
$\frac{1}{3}(1,1)$.
\end{lemma}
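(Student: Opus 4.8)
The statement is that in a $\mathbb{Q}$-homology projective plane $S$ with cyclic singularities, $H_1(S^0,\mathbb{Z})=0$, and singularity orders $(2,3,5,q)$, the order-$3$ singular point cannot be of type $\frac{1}{3}(2,1)$, i.e. its resolution graph cannot be the chain $[2,2]$ (type $A_2$); it must be $[3]$, i.e. type $\frac{1}{3}(1,1)$. The plan is to argue by contradiction, assuming the order-$3$ point resolves to $A_2$, and to derive a contradiction from the square-number criterion $D=|\det(R)|K_S^2$ combined with Proposition~\ref{det-trace} applied to the order-$5$ and order-$q$ strings. The key point is that $|\det(R)|$ is the product of the orders $2\cdot 3\cdot 5\cdot q = 30q$, so $D=30q\cdot K_S^2$, and I want to extract enough $3$-adic information from $K_S^2$ to see that $D$ is exactly divisible by an odd power of $3$, contradicting that $D$ is a square (Lemma~\ref{coprime}(5)).

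The main computation is to track the $3$-adic valuation of $D$. Using $K_S^2 = K_{S'}^2 + \sum_p \mathcal{D}_p K_{S'}$ and $K_{S'}^2 = 9-L$ from Noether, together with the explicit formula $\mathcal{D}_pK_{S'} = \sum_j (1 - \frac{v_j+u_j}{q_p})(n_j-2)$ from Lemma~\ref{Dp}(2), I would clear denominators: multiplying through by $q\cdot |G_{(5)}|\cdot |G_{(q)}|$-type factors, the quantity $D = 30q\cdot K_S^2$ becomes an integer combination where the contribution of each singular point $p$ of order $q_p$ enters through $q_p + (\text{numerator of }\mathcal{D}_pK_{S'})$-type expressions. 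The precise identity to use is: if $p$ has string $w=[n_1,\dots,n_l]$ of order $q_p$, then $q_p\cdot \mathcal{D}_pK_{S'} = q_p\,tr(w) - 2q_p\, l - \bigl(\text{something involving }q_1,q_l\bigr)$; more efficiently, combine $\mathcal{D}_pK_{S'} = \mathcal{D}_p^2$-free version of Lemma~\ref{Dp}(3), namely $\mathcal{D}_p^2 = 2l - tr(w) + 2 - \frac{q_1+q_l+2}{q_p}$, so that $q_p\cdot(-\mathcal{D}_p^2) = q_p(tr(w) - 2l - 2) + q_1 + q_l + 2$. This is exactly where Proposition~\ref{det-trace} becomes the decisive tool: the quantity $q_1 + q_l + tr(w)\cdot q_p$ is $\equiv 0 \pmod 3$ iff $3\nmid q_p$. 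Since $\gcd(q,30)=1$, both the order-$5$ and order-$q$ strings have order prime to $3$, so for each of them $q_1 + q_l + tr\cdot q_p \equiv 0\pmod 3$.

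Putting this together: $D = 30q\cdot K_S^2$, and I would compute $D$ modulo $9$ (or track $v_3(D)$ exactly). The order-$3$ point contributes, in the $A_2=[2,2]$ case, $\mathcal{D}_pK_{S'} = 0$ (since all $n_j=2$), whereas in the $[3]$ case it contributes $-\frac{1}{3}$; so the two cases differ precisely in the $3$-divisibility of $K_S^2$, hence of $D$. After multiplying out, the assumption "$A_2$" should force $v_3(D)$ to be odd — essentially because the factor $3$ coming from $|\det(R)|=30q$ is not compensated by a factor of $3$ in the denominator of $K_S^2$ (that denominator divides $5q$, prime to $3$, once the order-$5$ and order-$q$ congruences from Proposition~\ref{det-trace} are used to see that the relevant numerators are divisible by $3$ in the $[3]$-case but the structure is different in the $A_2$-case), so $3\|D$, contradicting $D = \square$. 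Conversely in the $[3]$-case the extra $\frac1 3$ in $\mathcal{D}_pK_{S'}$ supplies the missing factor, so no contradiction arises. The main obstacle is bookkeeping: one must be careful that the denominators from the order-$5$ and order-$q$ parts of $\sum_p\mathcal{D}_pK_{S'}$ really do not interact with the prime $3$, and this is exactly what Proposition~\ref{det-trace} guarantees — it is the technical heart, and stating the reduction cleanly so that only $3$-adic valuations need to be compared is the part requiring care.
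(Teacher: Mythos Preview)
Your proposal is correct and follows essentially the same approach as the paper: assume the order-$3$ point is of type $A_2$, compute $D = 30q \cdot K_S^2$, and use Proposition~\ref{det-trace} together with $\gcd(q,30)=1$ to force $v_3(D)=1$, contradicting that $D$ is a perfect square. The only difference is organizational: the paper splits into three cases according to the type of the order-$5$ singularity ($A_4$, $[3,2]$, or $[5]$), computes $D$ explicitly in each, and applies Proposition~\ref{det-trace} only to the order-$q$ string, whereas your plan applies the proposition to the order-$5$ string as well (valid since $3\nmid 5$) and thereby treats all three cases uniformly.
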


\begin{proof}
Suppose that it is of type $A_2$. We divide the proof into 3 cases
according to the type of the third singularity.

\medskip
Case 1: $A_1+A_2+A_4+\frac{1}{q}(1,q_1)$.  In this case $$K^2_S=
\overset{l}{\underset{j = 1}{\sum}} n_j - 3l+
\frac{q_1+q_l+2}{q},$$ and $$
D=30\{q_1+q_l+(\overset{l}{\underset{j = 1}{\sum}} n_j - 3l)q +
2\}.$$ Since $D$
        is a square number, $3$ divides $q_1 + q_l + (tr -3l)q+2\equiv q_1 + q_l + (tr)q+2$. Then, by Proposition \ref{det-trace},
        $q$ is a multiple of $3$, a contradiction.

\medskip
Case 2: $A_1+A_2+\frac{1}{5}(1,2)+\frac{1}{q}(1,q_1)$. In this
case $$K^2_S=\overset{l}{\underset{j = 1}{\sum}} n_j - 3l +
\frac{12}{5} + \frac{q_1+q_l+2}{q},$$ and $$D=6[5(q_1+q_l) + \{
5(\overset{l}{\underset{j = 1}{\sum}} n_j - 3l) + 12 \}q + 10].$$
Thus $3$ divides $5(q_1 + q_l) + \{5(tr -3l)+12\}q+10\equiv -(q_1
+ q_l) -(tr)q+1$. Then, by Proposition \ref{det-trace},
        $q$ is a multiple of $3$, a contradiction.

\medskip
Case 3: $A_1+A_2+\frac{1}{5}(1,1)+\frac{1}{q}(1,q_1)$. In this
case $$K^2_S=\overset{l}{\underset{j = 1}{\sum}} n_j - 3l +
\frac{24}{5} + \frac{q_1+q_l+2}{q},$$ and $$D=6[5(q_1+q_l) + \{
5(\overset{l}{\underset{j = 1}{\sum}} n_j - 3l) + 24 \}q + 10].$$
Thus $3$ divides $5(q_1 + q_l) + \{5(tr -3l)+24\}q+10$. Then, by
Proposition \ref{det-trace},
        $q$ is a multiple of $3$, a contradiction.
\end{proof}

In the following two lemmas, we do not assume that $H_1(S^0,
\mathbb{Z}) = 0$. So the orders may not be pairwise relatively
prime.

\begin{lemma}\label{blow}
Let $S$ be a $\mathbb{Q}$-homology projective plane with exactly
$4$ cyclic singular points $p_1, p_2, p_3, p_4$ of orders
$(2,3,5,q)$, $q \geq 7$. $($We do not assume that $\gcd(q, 30) =
1$.$)$ Regard $\mathcal{F}:=f^{-1}(Sing(S))$ as a reduced integral
divisor on $S'$. Assume that $S'$ contains a $(-1)$-curve $E$.
Then,
$$E.\mathcal{F} \geq 2.$$ The equality holds if and only if
 $E.f^{-1}(p_i)=0$ for $i=1,2,3$
and $E.f^{-1}(p_4)=2$.
\end{lemma}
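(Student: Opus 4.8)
The plan is to intersect the canonical‑bundle formula $K_{S'}\underset{num}{\equiv}f^{*}K_S-\sum_p\mathcal D_p$ with the $(-1)$-curve $E$ and to feed the outcome, together with the identities of Proposition \ref{int}, through a case analysis. I use throughout that $K_S$ is not numerically trivial, so that $K_S$ or $-K_S$ is ample; this is what makes Proposition \ref{int} and Lemma \ref{general} available. A few preliminaries first: since the exceptional curves of a minimal resolution have self-intersection $\le -2$, $E$ is none of them, so $f_*E$ is a nonzero effective curve on $S$, $EA_{j,p}\ge 0$ for all exceptional $A_{j,p}$, and $E\cdot\mathcal F=\sum_{p,j}EA_{j,p}\ge 0$. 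Writing the class of $E$ in the form \eqref{E}, Lemma \ref{general}(5) gives $m\ne 0$, with $m>0$ if $K_S$ is ample and $m<0$ if $-K_S$ is ample. Since all singularities are cyclic, $|\det(R)|=2\cdot3\cdot5\cdot q=30q$, so $D=30q\,K_S^2$ is a nonzero perfect square (Lemma \ref{general}(1)); hence, whenever the computation pins $K_S^2$ down to an explicit rational value, the condition that $30q\,K_S^2$ be a square is a strong constraint on $q$. Finally, intersecting the canonical formula with $E$ and using $K_{S'}E=-1$ gives the mass identity $\sum_p\mathcal D_p\cdot E=K_S\cdot f_*E+1$, and since $0\le a_{j,p}<1$ in $\mathcal D_p=\sum_j a_{j,p}A_{j,p}$ one has $\mathcal D_p\cdot E<E\cdot f^{-1}(p)$ whenever $E\cdot f^{-1}(p)>0$.

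If $K_S$ is ample, then $K_S\cdot f_*E>0$, so $\sum_p\mathcal D_p\cdot E>1$; hence some $E\cdot f^{-1}(p)$ is positive, and summing the strict inequality above over $p$ gives $E\cdot\mathcal F>\sum_p\mathcal D_p\cdot E>1$, i.e.\ $E\cdot\mathcal F\ge2$. For the equality case I would exploit the explicit coefficient bounds $\mathcal D_{p_1}=0$, coefficients $\le 1/3$ at $p_2$ (and $0$ if $p_2=A_2$), coefficients $\le 3/5$ at $p_3$ (and $0$ if $p_3=A_4$), coefficients $\le 1-2/q$ at $p_4$: plugging these with $\sum_iE\cdot f^{-1}(p_i)=2$ into $\sum_p\mathcal D_p\cdot E>1$ rules out any intersection with $p_1$ and reduces to a few borderline configurations (for instance a simple intersection with a curve of $p_3=\frac15(1,2)$ plus a simple intersection over $p_4$); these are killed by Proposition \ref{int}(2) — from $E^2=-1$ and $\frac{m^2}{D'}K_S^2>0$ one gets $\sum_p\sum_j\frac{v_{j,p}u_{j,p}}{q_p}(EA_{j,p})^2<1$, which clashes with the small values of $v_{j,p}u_{j,p}$ at $p_1,p_2,p_3$ — with the Bogomolov–Miyaoka–Yau bound $K_S^2\le 3e_{orb}(S)=\frac{1}{10}+\frac{3}{q}$ to finish the last ones. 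So in the ample case $E\cdot\mathcal F=2$ forces both intersections over $p_4$.

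If $-K_S$ is ample, the key point is that $-K_{S'}=f^{*}(-K_S)+\sum_p\mathcal D_p$ is nef (it equals $n_{j,p}-2\ge 0$ on each exceptional curve $A_{j,p}$ and is positive on every other curve), whence $K_{S'}^2=9-L\ge 0$, i.e.\ $L={\rm rank}(R)\le 9$, so the Hirzebruch–Jung string at $p_4$ has at most $6$ entries. I would then check the finitely many distributions of $E\cdot\mathcal F\in\{0,1,2\}$ among $p_1,\dots,p_4$: the value $0$ is impossible by Proposition \ref{int}(2) ($E^2=\frac{m^2}{D'}K_S^2>0$); if $E$ meets a single exceptional curve $A_s$ of some $p$, then $\frac{v_su_s}{q_p}=1+\frac{m^2}{D'}K_S^2>1$, so by Lemma \ref{cf}(5) $A_s$ is an interior node and $p$ is $p_3=A_4$ or $p_4$; and in every remaining configuration Proposition \ref{int}(1)–(3) determines $K_S^2$, after which a contradiction comes from Noether's formula $K_S^2=(9-L)+\sum_p(-\mathcal D_p^2)$, Lemma \ref{Dp}(3), the bound $L\le 9$, the estimates of Lemma \ref{uv} on the $u_j,v_j$ of the order-$q$ string, and the square condition on $D$ — e.g.\ $E\cdot f^{-1}(p_1)=2$ gives $K_S^2=1$, hence $L\in\{8,9\}$, leaving only $[3,3]$ at $p_4$ with $q=8$, which is killed because $240$ is not a square. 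The only survivor is, once more, a double intersection over $p_4$.

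The step I expect to be hardest is the $-K_S$-ample bookkeeping: even after $L\le 9$ makes the list of configurations finite, one must sift through the Hirzebruch–Jung strings at the order-$q$ singularity, whose length is bounded but whose determinant $q$ is not, and rule out each string using the Noether constraint, Lemma \ref{Dp}(3) and the perfect-square condition on $D$ in tandem; this is exactly where the number-theoretic identities behind Proposition \ref{det-trace} and the uniform inequalities of Lemma \ref{uv} earn their keep. The analogous borderline sub-cases on the $K_S$-ample side — a simple intersection with a small singularity together with one over $p_4$ — need a similar, slightly finer, mixture of Proposition \ref{int}, the BMY bound, and the square condition.
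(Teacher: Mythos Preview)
Your approach is numerical, via the canonical-bundle formula, Proposition~\ref{int}, and the square condition on $D$, and is genuinely different from the paper's proof. The argument that $E\cdot\mathcal F\ge 2$ when $K_S$ is ample, using $\sum_p\mathcal D_p\cdot E=K_S\cdot f_*E+1>1$ together with $\mathcal D_p\cdot E<E\cdot f^{-1}(p)$, is correct and slick. But the rest has a real gap. The inequality you extract from Proposition~\ref{int}(2) goes the wrong way: from $E^2=-1$ and $EA_{j,p}\ge 0$ one obtains
\[
\sum_{p,j}\frac{v_{j,p}u_{j,p}}{q_p}(EA_{j,p})^2\ \le\ 1+\frac{m^2}{D'}K_S^2,
\]
not $<1$. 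Without an a~priori bound on $m/\sqrt{D'}$ (the paper gets one in Lemma~\ref{bound}, but only under the extra hypothesis that $S$ is not rational), the right-hand side is uncontrolled, so the ``borderline configurations'' in the $K_S$-ample equality case cannot be killed this way, and your sentence about a ``clash with the small values of $v_{j,p}u_{j,p}$'' does not make sense with the inequality pointing upward. The $-K_S$-ample side has the same structural defect: the observation $-K_{S'}$ nef $\Rightarrow L\le 9$ bounds the \emph{length} of the order-$q$ string, but not the entries $n_j$ or $q$; you acknowledge this, yet the proposed mix of Noether, Lemma~\ref{Dp}(3) and the square condition is a plan rather than an argument, and for $E\cdot\mathcal F=1$ meeting an interior node of $f^{-1}(p_4)$ it yields two equations in the infinitely many unknowns $(n_1,\dots,n_l,q)$.

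The paper's proof avoids all of this by arguing geometrically, uniformly in the sign of $K_S$. If $E\cdot\mathcal F=1$, blow up the intersection point and contract the proper transform of $E$ together with those of all components of $\mathcal F$: one obtains a $\mathbb Q$-homology $\mathbb P^2$ with \emph{five} quotient singularities, whose minimal resolution is, by \cite{HK1}, an Enriques surface with no $(-1)$-curve --- a contradiction. For $E\cdot\mathcal F=2$, similar blow-up/contract moves show that $E$ cannot touch any end component of $f^{-1}(p_i)$ for $i\le 3$, nor a middle component of $A_4$: in each sub-case one manufactures either a $\mathbb Q$-homology $\mathbb P^2$ with six singularities (contradicting \cite{HK1}) or one with four singularities and $e_{orb}<0$ (contradicting Theorems~\ref{bmy},~\ref{bmy2}). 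No split on the sign of $K_S$, no bound on $m$, and no square condition are needed.
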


\begin{proof}
Assume that $E.\mathcal{F} = 1$. Blowing up the intersection
point, then contracting the proper transform of $E$ and the proper
transforms of all irreducible components of $\mathcal{F}$, we
obtain a $\mathbb{Q}$-homology projective plane $\bar{S}$ with $5$
quotient singular points. Then, by \cite{HK1}, the minimal
resolution of $\bar{S}$ is an Enriques surface, hence has no
(-1)-curve, which is a contradiction. This proves that
$E.\mathcal{F} \ge 2$.

\medskip
 Assume that $E.\mathcal{F} = 2$.\\
Suppose that $E$ meets an end component $F$ of $f^{-1}(p_i)$ for
some $1\le i\le 3$.

If $EF=1$, then $EF'=1$ for some other component $F'$ of
$f^{-1}(p_j)$, where $j$ may or may not be $i$. Assume that $E\cap
F\cap F'=\emptyset$. Blowing up the intersection point of $E$ and
$F'$ sufficiently many times, then contracting the proper
transform of $E$ with a string of $(-2)$-curves and the proper
transforms of all irreducible components of $\mathcal{F}$, we
obtain a $\mathbb{Q}$-homology projective plane $\bar{S}$ with $4$
quotient singular points such that $e_{orb}<0$ (see Lemma
\ref{cf}(6)), which violates the orbifold Bogomolov-Miyaoka-Yau
inequality. Assume that $E\cap F\cap F'\neq\emptyset$. Blowing up
the intersection point once, then contracting the proper transform
of $E$ and the proper transforms of all irreducible components of
$\mathcal{F}$, we obtain a $\mathbb{Q}$-homology projective plane
$\bar{S}$ with $6$ quotient singular points, a contradiction to
\cite{HK1}.

If $E$ intersects $F$ at 2 distinct points, then we get a similar
contradiction: blowing up one of the two intersection points of
$E$ and $F$ sufficiently many times, then contracting the proper
transform of $E$ with the adjacent string of $(-2)$-curves and the
proper transforms of all irreducible components of $\mathcal{F}$,
to obtain a $\mathbb{Q}$-homology projective plane $\bar{S}$ with
$4$ quotient singular points such that $e_{orb}<0$.

If $E$ intersects $F$ at 1 point with multiplicity 2, then blowing
up the intersection point twice and then contracting the proper
transform of $E$ with a $(-2)$-curve and the proper transforms of
all irreducible components of $\mathcal{F}$, we obtain a
$\mathbb{Q}$-homology projective plane $\bar{S}$ with $6$ quotient
singular points, a contradiction to \cite{HK1}.

We have proved that $E$ does not meet any end component of
$f^{-1}(p_i)$ for $1\le i\le 3$.
 This implies that
$E.f^{-1}(p_1)=E.f^{-1}(p_2)=0$ and $E.f^{-1}(p_3)=0$ if
$f^{-1}(p_3)$ has at most 2 components. We will show that
$E.f^{-1}(p_3)=0$ even if $f^{-1}(p_3)$ has more than 2
components, i.e., $p_3$ is of type $A_4$. Suppose that $p_3$ is of
type $A_4$ and $F_1, F_2, F_3, F_4$ be its 4 components whose dual
graph is $F_1\--F_2\--F_3\--F_4$.

 If $E$ meets $F_2$ at two distinct points, then
blowing up one of the two intersection points of $E$ and $F_2$
once, then contracting the proper transform of $E$ and the proper
transforms of all irreducible components of $\mathcal{F}$, we
obtain a $\mathbb{Q}$-homology projective plane $\bar{S}$ with one
noncyclic quotient singularity of type

\bigskip
$$
\begin{picture}(120,30)
\put(-100,15){$<3;2,1;2,1;3,2> :=  $}
\put(5,25){$\overset{-2}{\circ}-\overset{-3}{\circ}-\overset{-2}{\circ}-\overset{-2}{\circ}$}
 \put(33,15){\line(0,0){6}}
 \put(28,5){$\underset{-2}{\circ}$}
\end{picture}
$$

\bigskip\noindent
and $3$ cyclic singular points of order 2, 3, $q$ (see
\cite{Brieskorn} or Table 1 of \cite{HK1} for the notation of dual
graphs of noncyclic singularities). This surface has
$$e_{orb}=-1+\frac{1}{2}+\frac{1}{3}+\frac{1}{q}+\frac{1}{48}<0,$$
which violates the orbifold Bogomolov-Miyaoka-Yau inequality.

 If
$EF_2=EF_3=1$ and $E\cap F_2\cap F_3=\emptyset$, then blowing up
the intersection point of $E$ and $F_3$ once, then contracting the
proper transform of $E$ and the proper transforms of all
irreducible components of $\mathcal{F}$, we obtain a
$\mathbb{Q}$-homology projective plane $\bar{S}$ with one
noncyclic quotient singularity of type

\bigskip
$$
\begin{picture}(120,30)
\put(-100,15){$<2;2,1;2,1;5,2> :=  $}
\put(5,25){$\overset{-2}{\circ}-\overset{-2}{\circ}-\overset{-3}{\circ}-\overset{-2}{\circ}$}
 \put(33,15){\line(0,0){6}}
 \put(28,5){$\underset{-2}{\circ}$}
\end{picture}
$$

\bigskip\noindent and $3$ cyclic singular
points of order 2, 3, $q$. This surface has
$$e_{orb}=-1+\frac{1}{2}+\frac{1}{3}+\frac{1}{q}+\frac{1}{60}<0,$$
which also violates the orbifold Bogomolov-Miyaoka-Yau inequality.

 If
$EF_2=EF_3=1$ and $E\cap F_2\cap F_3\neq\emptyset$, then blowing
up the intersection point once, then contracting the proper
transform of $E$ and the proper transforms of all irreducible
components of $\mathcal{F}$, we obtain a $\mathbb{Q}$-homology
projective plane $\bar{S}$ with 6 quotient singular points, a
contradiction to \cite{HK1}.

 If
$EF_2=1$ and $EF=1$ for some component $F$ of $f^{-1}(p_i)$ for
some $i\neq 3$, then blowing up the intersection point of $E$ and
$F$ four times, then contracting the proper transform of $E$ with
a string of three $(-2)$-curves and the proper transforms of all
irreducible components of $\mathcal{F}$, we obtain a
$\mathbb{Q}$-homology projective plane $\bar{S}$ with one
noncyclic quotient singularity of type $E_8=<2;2,1;3,2;5,4>$

\bigskip
$$
\begin{picture}(120,30)
\put(-100,15){$<2;2,1;3,2;5,4>:= $}
\put(5,25){$\overset{-2}{\circ}-\overset{-2}{\circ}-\overset{-2}{\circ}-\overset{-2}{\circ}-\overset{-2}{\circ}-\overset{-2}{\circ}-\overset{-2}{\circ}$}
 \put(55,15){\line(0,0){6}}
 \put(50,5){$\underset{-2}{\circ}$}
\end{picture}
$$

\bigskip\noindent and $3$ cyclic singular points of order $\ge 2$, $\ge 3$, $\ge q$.
This surface has $$e_{orb}\le
-1+\frac{1}{2}+\frac{1}{3}+\frac{1}{q}+\frac{1}{120}<0,$$ which
violates the orbifold Bogomolov-Miyaoka-Yau inequality.\\ This
completes the proof of $E.f^{-1}(p_3)=0$. Thus $E.f^{-1}(p_4)=2$.
\end{proof}

In the following lemma, we do not assume that $H_1(S^0,
\mathbb{Z}) = 0$.

\begin{lemma}\label{q20} Let $S$ be a $\mathbb{Q}$-homology projective plane with exactly $4$
cyclic singular points $p_1, p_2, p_3, p_4$ of orders $(2,3,5,q)$.
$($We do not assume that $\gcd(q, 30) = 1.)$ Assume that $K_S$ is
ample. Assume that the order $3$ singularity is of type
$\frac{1}{3}(1,1)$. Then the following hold true.
\begin{enumerate}
\item $L\geq 12$ except possibly four cases, No.$1-4$ in Table
\ref{L11}. In each of these four cases, $S$ is rational and
$L=11$.
\item $q\geq 20$ except possibly one case, No.$1$ in Table \ref{L11}.
\end{enumerate}
\end{lemma}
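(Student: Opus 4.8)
The plan is to combine the numerical constraints coming from $K_S$ being ample — namely $K_S^2>0$ and, via the orbifold Bogomolov--Miyaoka--Yau inequality (Theorem~\ref{bmy}, applicable as $K_S$ is nef), $K_S^2\le 3e_{orb}(S)$ — with the square‑discriminant condition of Lemma~\ref{general}(1), to cut the possible configurations of $\mathcal F=f^{-1}(Sing(S))$ down to a finite list, and then to settle the survivors by forcing a $(-1)$-curve via Lemma~\ref{bound} and Proposition~\ref{int}. Since $e(S)=3$ and the local fundamental groups have orders $2,3,5,q$,
\[
e_{orb}(S)=3-\tfrac12-\tfrac23-\tfrac45+\tfrac1q=\tfrac1{30}+\tfrac1q,\qquad\text{so}\qquad 0<K_S^2\le\tfrac1{10}+\tfrac3q,
\]
and $D=|\det R|K_S^2=30q\,K_S^2$ is a nonzero perfect square. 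Writing $l_3,l_4$ for the lengths of the chains over the order-$5$ and order-$q$ points, so $L=2+l_3+l_4$, the Noether formula $K_{S'}^2=9-L$ and Lemma~\ref{Dp} give
\[
K_S^2=(9-L)+\tfrac13+\delta_5+\mathcal D_{p_4}K_{S'},\qquad \mathcal D_{p_4}K_{S'}=tr_4-2l_4-2+\tfrac{q_1+q_l+2}{q}\ \ge 0 ,
\]
where $\delta_5\in\{\tfrac95,\tfrac25,0\}$ according as the order-$5$ point is $\tfrac15(1,1)$, $\tfrac15(1,2)$ or $A_4=\tfrac15(1,4)$ (with $l_3=1,2,4$), $q_1,q_l$ are the tail determinants of the chain over $p_4$, and $\tfrac{q_1+q_l+2}{q}\in(0,2]$ by Lemma~\ref{cf}(5). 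I will also use the elementary bound $q\ge l_4(e_4+1)+1$ with $e_4:=tr_4-2l_4=\sum_j(n_{j,4}-2)$, a consequence of Lemma~\ref{cf}(5),(6) by induction on $e_4$.

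First, the case $\delta_5=0$ does not occur: there $0<(9-L)+\tfrac13+\mathcal D_{p_4}K_{S'}\le\tfrac1{10}+\tfrac3q$ together with $\mathcal D_{p_4}K_{S'}\le e_4$ and $q\ge l_4(e_4+1)+1$ bounds $l_4$, leaving finitely many chains over $p_4$, and one checks each violates $K_S^2>0$ or $K_S^2\le 3e_{orb}(S)$. So $\delta_5\in\{\tfrac95,\tfrac25\}$. Using $\mathcal D_{p_4}K_{S'}\ge0$ in BMY gives $L\ge 9+\tfrac13+\delta_5-\tfrac1{10}-\tfrac3q$, hence $L\ge 11$ when $\delta_5=\tfrac95$ and $L\ge 10$ when $\delta_5=\tfrac25$; the residual case $L=10,\ \delta_5=\tfrac25$ (so $l_4=4$) is eliminated by a short inspection of the length-$4$ chains over $p_4$ (each violates BMY or makes $D$ a non‑square). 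So $L\ge 11$. For $L=11$ we have $(l_3,l_4)\in\{(1,8),(2,7)\}$ and $K_S^2=-2+\tfrac13+\delta_5+\mathcal D_{p_4}K_{S'}$; the inequalities $0<K_S^2\le\tfrac1{10}+\tfrac3q$ confine $\mathcal D_{p_4}K_{S'}$ to an interval of length $<\tfrac1{10}+\tfrac37<1$, and since $\mathcal D_{p_4}K_{S'}-(e_4-2)=\tfrac{q_1+q_l+2}{q}\in(0,2]$ this forces $e_4$ into at most two values ($e_4\le 2$ if $l_3=1$, $e_4\le 3$ if $l_3=2$). Hence only finitely many chains over $p_4$ remain; computing $K_S^2$ and $D$ for each and retaining those satisfying ampleness, BMY and the square condition leaves exactly the four configurations of Table~\ref{L11}, all with $L=11$. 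This proves the first assertion of (1).

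It remains to show those four configurations force $S$ rational, and to prove (2). In each of the four cases $D\in\{4,16,36\}$. If such an $S$ were not rational, then (as $K_S$ is ample, $S$ is not rational, and $L=11>9$) Lemma~\ref{bound} produces a $(-1)$-curve $E$ in the form \eqref{E} with $0<m\le\sqrt{D'}/2\le\sqrt{D}/2\le 3$; a $(-1)$-curve is not a component of $\mathcal F$, so all $EA_{j,p}\ge0$ and $EK_{S'}=-1$, and Proposition~\ref{int}(1) yields
\[
\sum_p\sum_j\Bigl(1-\tfrac{v_{j,p}+u_{j,p}}{q_p}\Bigr)EA_{j,p}=1+\tfrac{m}{\sqrt{D'}}K_S^2 ,
\]
a linear equation in non-negative integers $EA_{j,p}$ whose coefficients and, for each admissible $m$ and divisor $D'$ of $D$, right-hand side are explicit; a finite case check shows it has no solution, so no such $E$ exists and $S$ is rational. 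For (2), suppose $q\le 19$; then $L\ge 11$ as above. If $L=11$, $S$ is one of the four configurations, and only No.~1 (with $q=9$) has $q<20$. If $L\ge 12$, then $l_4\ge 9$ when $\delta_5=\tfrac95$ (resp. $l_4\ge 8$ when $\delta_5=\tfrac25$), and $K_S^2>0$ forces $e_4>L-9-\tfrac13-\delta_5$, i.e. $e_4\ge l_4-8$ (resp. $l_4-5$); then $q\ge l_4(e_4+1)+1\ge 19$ (resp. $\ge 33$), and in the unique borderline case $\delta_5=\tfrac95,\ l_4=9,\ q=19$ the chain over $p_4$ must be $[3,2,2,2,2,2,2,2,2]$, for which $\mathcal D_{p_4}K_{S'}=\tfrac9{19}$ gives $K_S^2<0$, contradicting ampleness. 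Hence $q\ge 20$ except for No.~1.

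The main obstacle is the finite but bookkeeping‑heavy verification underlying the two enumerations: showing that, after imposing ampleness, the orbifold BMY inequality, and the square condition on $D$, precisely the four configurations of Table~\ref{L11} survive, and that in each of them the Diophantine equation of Proposition~\ref{int}(1) — for the small $m$ allowed by Lemma~\ref{bound} — has no solution. A further subtlety is that $q$ need not be coprime to $30$, so $\det(R)$ need not be cyclic and $D'=D/c^2$ may be a proper divisor of $D$; this must be tracked when applying Lemmas~\ref{bound}, \ref{general} and Proposition~\ref{int} (in particular one may need to bound $m$ using $D$ rather than $D'$).
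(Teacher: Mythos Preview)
Your overall strategy is close to the paper's, but there is a genuine gap at the very first step. You claim that the case $\delta_5=0$ (i.e., $p_3$ of type $A_4$) ``does not occur'' because the inequalities $0<K_S^2\le\tfrac1{10}+\tfrac3q$, the bound $\mathcal D_{p_4}K_{S'}\le e_4$, and $q\ge l_4(e_4+1)+1$ together bound $l_4$. They do not. With $L=l_4+6$ in this case, the two-sided bound on $K_S^2$ only pins down the \emph{difference} $l_4-e_4$ (one finds $l_4-e_4\in\{2,3\}$), so $l_4$ and $e_4$ can both be arbitrarily large while $q$ grows quadratically in $l_4$. In particular the $A_4$ case is \emph{not} globally excluded; configurations with $p_3=A_4$ and $L\ge 12$ are precisely among those handled later in Section~6. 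What the lemma requires for $\delta_5=0$ is only that no $L\le 11$ configuration survives, and that is a finite check (for $l_4\le 5$) which you have not carried out. The paper proceeds the other way round: it assumes $L\le 11$ from the outset, enumerates the finitely many chains over $p_4$ for each of the three types of $p_3$ (obtaining $42+80+6=128$ candidates), and then filters by the square condition on $D$ and by BMY, leaving exactly the four rows of Table~\ref{L11}. Your treatment of part~(2) inherits the same defect, since you again rely on having already excluded $\delta_5=0$.

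There is a second, smaller gap in the rationality argument. You assert that in each of the four surviving cases the linear equation from Proposition~\ref{int}(1) has no non-negative integer solution for the allowed $m$. This is true for Nos.~1, 2 and 4, but it fails for No.~3: there the equation $\tfrac{x}{3}+\tfrac{y}{5}+\tfrac{z}{33}=\tfrac{56}{55}$ has the solutions $(x,y,z)=(0,1,27),(1,1,16),(2,1,5)$. The paper eliminates the last by inspecting the actual coefficients $1-\tfrac{v_j+u_j}{q}$ over $p_4$, and then rules out the first two using the \emph{inequality} of Proposition~\ref{int}(2), namely $\sum_{p,j}\tfrac{v_ju_j}{q}(EA_j)^2\le 1+\tfrac{m^2}{D'}K_S^2$. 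Your sketch omits this second ingredient; Proposition~\ref{int}(1) alone is not enough here.
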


\begin{proof}
(1) We have to consider the following types.
\bigskip
\begin{itemize} \item $A_1 + \frac{1}{3}(1,1) + A_4 +
\frac{1}{q}(1,q_1)$ \item $A_1 + \frac{1}{3}(1,1) +
\frac{1}{5}(1,2) + \frac{1}{q}(1,q_1)$ \item $A_1 +
\frac{1}{3}(1,1) + \frac{1}{5}(1,1) + \frac{1}{q}(1, q_1)$
\end{itemize}
\bigskip
Let $[n_1, \ldots,n_l]$ be the Hirzebruch-Jung continued fraction
corresponding to the singularity $p_4$. Since $K_S$ is ample,
Theorem \ref{bmy} implies that
$$0<K_{S'}^2-\mathcal{D}_{p_2}^2-\mathcal{D}_{p_3}^2-\mathcal{D}_{p_4}^2=K_S^2\leq 3e_{orb}(S)=\frac{1}{10}+\frac{3}{q}.$$
Since $K_{S'}^2=9-L$, $\mathcal{D}_{p_2}^2=-\frac{1}{3}$, Lemma
\ref{Dp} implies that
$$ L-7
+2l-\frac{1}{3}+\mathcal{D}_{p_3}^2-\frac{q_1+q_l+2}{q}<\sum
n_j\leq L-7
+2l-\frac{1}{3}+\mathcal{D}_{p_3}^2-\frac{q_1+q_l-1}{q}+\frac{1}{10}.$$
In particular, if $L$ is bounded, so is the number of possible
cases for $[n_1, \ldots,n_l]$.

Assume that  $L\leq 11$.

If $p_3$ is of type $A_4$, then $L=l+6$, $\mathcal{D}_{p_3}^2=0$
and the above inequality shows that $\sum n_j=3l-2$ or $3l-3$, so
up to permutation of $n_1,\ldots, n_l$,
$$\begin{array}{lll} [n_1,\ldots,
n_l]&=&[5,2,2,2,2], [4,3,2,2,2], [3,3,3,2,2];\\
&& [4,2,2,2,2], [3,3,2,2,2];\\
&& [4,2,2,2], [3,3,2,2];\\&& [3,2,2,2];\\ && [3,2,2];\\&&
[2,2,2];\\&& [2,2],\end{array}$$ hence there are 42 possible cases
for $[n_1, \ldots,n_l]$. Here we identify $[n_1, \ldots,n_l]$ with
its reverse $[n_l, \ldots,n_1]$.

If $p_3$ is of type $\frac{1}{5}(1,2)$, then $L=l+4$,
$\mathcal{D}_{p_3}^2=-\frac{2}{5}$ and $\sum n_j=3l-4$ or $3l-5$,
so up to permutation of $n_1,\ldots, n_l$,
$$\begin{array}{lll} [n_1,\ldots,
n_l]&=&[5,2,2,2,2,2,2], [4,3,2,2,2,2,2], [3,3,3,2,2,2,2];\\
&& [4,2,2,2,2,2,2], [3,3,2,2,2,2,2];\\
&& [4,2,2,2,2,2], [3,3,2,2,2,2];\\&& [3,2,2,2,2,2];\\ &&
[3,2,2,2,2];\\&& [2,2,2,2,2];\\&& [2,2,2,2],\end{array}$$ hence
there are 80 possible cases for $[n_1, \ldots,n_l]$ if $l\leq 7$.

If $p_3$ is of type $\frac{1}{5}(1,1)$, then $L=l+3$,
$\mathcal{D}_{p_3}^2=-\frac{9}{5}$ and $\sum n_j=3l-7$ or $3l-8$,
so up to permutation of $n_1,\ldots, n_l$,
$$\begin{array}{lll} [n_1,\ldots,
n_l]&=& [3,2,2,2,2,2,2,2], [2,2,2,2,2,2,2,2];\\
&& [2,2,2,2,2,2,2], \end{array}$$ hence there are 6 possible cases
for $[n_1, \ldots,n_l]$ if $l\leq 8$.

Among these $42+80+6=128$ cases, a direct calculation of
$D=|\det(R)|K_S^2$ shows that only 11  cases satisfy the condition
that $D$ is a positive square number (see Lemma \ref{coprime}(5)).
Table \ref{L11} describes the 11 cases.
\begin{table}[ht]
\caption{}\label{L11}
\renewcommand\arraystretch{1.5}
\noindent\[
\begin{array}{|c|l|c|c|c|c|}
\hline
{\rm No.} & \textrm{ Type\,\,of\,\,$R$} & q & K^2_S  & & 3e_{orb}\\
 \hline
1 & A_1+\frac{1}{3}(1,1)+\frac{1}{5}(1,1)+[2,2,2,2,2,2,2,2] & 9 & \frac{2}{15} & < & \frac{13}{30} \\ \hline
2 &     A_1+\frac{1}{3}(1,1)+\frac{1}{5}(1,2)+[4,2,2,2,2,2,2] & 22 & \frac{1}{165} & < &  \frac{13}{55}\\   \hline
3 &   A_1+\frac{1}{3}(1,1)+\frac{1}{5}(1,2)+[3,3,2,2,2,2,2] & 33 & \frac{2}{55} & < &  \frac{21}{110}\\   \hline
4 &   A_1+\frac{1}{3}(1,1)+\frac{1}{5}(1,2)+[3,2,2,3,2,2,2] & 43 & \frac{8}{645} & < &  \frac{73}{430}\\   \hline
5 &   A_1+\frac{1}{3}(1,1)+\frac{1}{5}(1,2)+[2,2,2,4,2,2,2] & 40 & \frac{1}{3} & > & \frac{7}{40}\\ \hline
6 &     A_1+\frac{1}{3}(1,1)+\frac{1}{5}(1,2)+[3,3,3,2,2,2,2] & 73 & \frac{1058}{1095} & > & \frac{103}{730} \\   \hline
7 &  A_1+\frac{1}{3}(1,1)+\frac{1}{5}(1,2)+[2,3,4,2,2,2,2] & 70 & \frac{25}{21} & > & \frac{1}{7} \\   \hline
8  &  A_1+\frac{1}{3}(1,1)+\frac{1}{5}(1,2)+[2,3,3,3,2,2,2] & 97 & \frac{1682}{1455} & > &  \frac{127}{970}\\   \hline
9 &  A_1+\frac{1}{3}(1,1)+\frac{1}{5}(1,2)+[2,2,4,3,2,2,2] & 78 & \frac{81}{65} & > &  \frac{9}{65}\\   \hline
10 &  A_1+\frac{1}{3}(1,1)+\frac{1}{5}(1,2)+[3,3,2,2,3,2,2] & 87 & \frac{128}{145} & > &  \frac{39}{290}\\   \hline
11 &  A_1+\frac{1}{3}(1,1)+\frac{1}{5}(1,2)+[2,3,3,2,2,3,2] & 103 & \frac{1568}{1545} & > &  \frac{133}{1030}\\
  \hline
\end{array}
\]
\end{table}

Among the 11 cases, only the first 4 cases satisfy the orbifold
Bogomolov-Miyaoka-Yau inequality $K_S^2\le 3e_{orb}$.

As for the first 4 cases of Table \ref{L11}, one can check that
none of them can be ruled out by any further lattice theoretic
argument, i.e., in each case the lattice $R$ can be embedded into
an odd unimodular lattice of signature $(1,L)$. This can be
checked by the local-global principle and the computation of
$\epsilon$-invariants (see e.g., \cite{HK1} Section 6).

To prove the rationality in each of the first 4 cases of Table
\ref{L11}, we will use the formulae from Proposition \ref{int}.
First note that $L = 11$ in each of the first 4 cases of Table
\ref{L11}.

\medskip
Case 1. Suppose that this case occurs on $S$ which is not
rational.\\ Note that $D=36$. Since ${\rm disc}(\bar{R})$ is a
cyclic group (Lemma \ref{general}), we see that
$\det(\bar{R})=\frac{\det(R)}{3^2}$, and hence
$D'=\frac{D}{3^2}=4$. By Lemma \ref{bound}, $S'$ contains a
$(-1)$-curve $E$ with $0< m\leq \frac{\sqrt{D'}}{L-9}=1$, i.e.,
$m=1$. By Proposition \ref{int}(1), we obtain
$$\underset{p}{\sum}\underset{j}{\sum} \Big(1-\frac{v_{j,p} + u_{j,p}}{q_p}  \Big)(EA_{j,p}) = 1 +
\frac{m}{\sqrt{D'}} K^2_S= \frac{16}{15}.$$ Looking at Table
\ref{finite2-1}, we see that there are non-negative integers $x,
y$ such that
$$\frac{x}{3}+\frac{3y}{5}=\frac{16}{15}.$$ It is easy to check that the equation has no
solution.
\begin{table}[ht]
\caption{}\label{finite2-1}
\renewcommand\arraystretch{1.5}
\noindent\[
\begin{array}{|c|c|c|c|c|c|c|c|c|c|c|c|}
\hline
 & [2] & [3]  & [5] & \multicolumn{8}{|c|}{[2,2,2,2,2,2,2,2]}  \\   \hline
j&1&1&1 &  1 &2&3&4&5&6&7&8\\
 \hline
1-\frac{v_j + u_{j}}{q} & 0 &\frac{1}{3}&\frac{3}{5}& 0&0&0&0&0&0&0&0\\
 \hline
\end{array}
\]
\end{table}

\medskip
Case 2. Suppose that this case occurs on $S$ which is not
rational.\\ Note that $D=4$. Since ${\rm disc}(\bar{R})$ is a
cyclic group (Lemma \ref{general}), we see that
$D'=\frac{D}{2^2}=1$. By Lemma \ref{bound}, $S'$ contains a
$(-1)$-curve $E$ with $0< m\leq
\frac{\sqrt{D'}}{L-9}=\frac{1}{2}$, a contradiction.

\medskip
Case 3. Suppose that this case occurs on $S$ which is not
rational.\\ Note that $D=36$. Since ${\rm disc}(\bar{R})$ is a
cyclic group (Lemma \ref{general}), we see that
$D'=\frac{D}{3^2}=4$.  By Lemma \ref{bound}, $S'$ contains a
$(-1)$-curve $E$ with $0< m\leq \frac{\sqrt{D'}}{L-9}=1$, i.e.,
$m=1$. By Proposition \ref{int} (1), we obtain
$$\underset{p}{\sum}\underset{j}{\sum} \Big(1-\frac{v_{j,p} + u_{j,p}}{q_p}  \Big)(EA_{j,p}) = 1 +
\frac{m}{\sqrt{D'}} K^2_S= \frac{56}{55}.$$
 Looking at Table \ref{finite2-3},
\begin{table}[ht]
\caption{}\label{finite2-3}
\renewcommand\arraystretch{1.5}
\noindent\[
\begin{array}{|c|c|c|c|c|c|c|c|c|c|c|c|}
\hline
 & [2] & [3]  & \multicolumn{2}{|c|}{[2,3]} & \multicolumn{7}{|c|}{[3,3,2,2,2,2,2]}  \\   \hline
j&1&1&1&2&  1 &2&3&4&5&6&7\\
 \hline
1-\frac{v_j + u_{j}}{q} & 0 &\frac{1}{3}&\frac{1}{5}& \frac{2}{5}
&
\frac{19}{33}& \frac{24}{33}&\frac{20}{33}&\frac{16}{33}&\frac{12}{33}&\frac{8}{33}&\frac{4}{33}\\
 \hline
 \frac{v_j u_{j}}{q} & \frac{1}{2} &\frac{1}{3}&\frac{3}{5}& \frac{2}{5}
&
\frac{13}{33}& \frac{18}{33}&\frac{40}{33}&\frac{52}{33}&\frac{54}{33}&\frac{46}{33}&\frac{28}{33}\\
 \hline
\end{array}
\]
\end{table}
we see that there are non-negative integers $x, y, z$ such that
$$\frac{x}{3}+\frac{y}{5}+\frac{z}{33}=\frac{56}{55}.$$ The equation has 3  solutions $(x, y, z) = (0,1,27), (1,1,16),
(2,1,5)$. Again by Table \ref{finite2-3}, we can rule out the
third solution. By Proposition \ref{int}(2), we obtain
$$\underset{p}{\sum}\underset{j}{\sum} \frac{v_j u_j}{q}(EA_{j})^2
\leq 1+\frac{m^2}{D'}K^2_S=\frac{111}{110},$$ which rules out the
first two solutions.

\medskip
Case 4. Suppose that this case occurs on $S$ which is not
rational.\\ Note that $D=4^2$.  Since the orders are pairwise
relatively prime, $D'=D$. By Lemma \ref{bound}, $S'$ contains a
$(-1)$-curve $E$ with $0< m\leq \frac{\sqrt{D}}{L-9}=2$, i.e.,
$m=1$ or 2. By Proposition \ref{int}, we obtain
$$\underset{p}{\sum}\underset{j}{\sum} \Big(1-\frac{v_{j,p} + u_{j,p}}{q_p}  \Big)(EA_{j,p}) = 1 +
\frac{m}{\sqrt{D}} K^2_S= \frac{647}{645}\quad {\rm or}\quad
\frac{649}{645}.$$ Looking at Table \ref{finite2-4},
\begin{table}[ht]
\caption{}\label{finite2-4}
\renewcommand\arraystretch{1.5}
\noindent\[
\begin{array}{|c|c|c|c|c|c|c|c|c|c|c|c|}
\hline
 & [2] & [3]  & \multicolumn{2}{|c|}{[2,3]} & \multicolumn{7}{|c|}{[3,2,2,3,2,2,2]}  \\   \hline
j&1&1&1&2&  1 &2&3&4&5&6&7\\
 \hline
1-\frac{v_j + u_{j}}{q} & 0 &\frac{1}{3}&\frac{1}{5}& \frac{2}{5}
&
\frac{23}{43}& \frac{26}{43}&\frac{29}{43}&\frac{32}{43}&\frac{24}{43}&\frac{16}{43}&\frac{8}{43}\\
 \hline
\end{array}
\]
\end{table}
we see that there are non-negative integers $x, y, z$ such that
$$\frac{x}{3}+\frac{y}{5}+\frac{z}{43}=\frac{647}{645}\quad {\rm or}\quad
\frac{649}{645}.$$ But it is easy to check that both equations
have no solution.

\medskip
(2) Suppose $2\le q\le 19$. A direct calculation shows that only 6
cases satisfy the condition that $D$ is a positive square number.
Table \ref{Lsix} contains the 6 cases.
\begin{table}[ht]
\caption{}\label{Lsix}
\renewcommand\arraystretch{1.5}
\noindent\[
\begin{array}{|l|c|c|c|c|}
\hline
\textrm{ Type\,\,of\,\,$R$} & q & K^2_S  & & 3e_{orb}\\
 \hline
A_1+\frac{1}{3}(1,1)+\frac{1}{5}(1,1)+A_8 & 9 & \frac{2}{15} & < &
\frac{13}{30} \\ \hline
A_1 + \frac{1}{3}(1,1)
+ A_4 + \frac{1}{4}(1,1) & 4 & \frac{10}{3} & > & \frac{17}{20}\\
\hline A_1 + \frac{1}{3}(1,1) + A_4 + \frac{1}{5}(1,2) & 5 &
\frac{26}{15} &
> &  \frac{7}{10}\\ \hline   A_1 +
\frac{1}{3}(1,1) + A_4
+ \frac{1}{6}(1,1) & 6 & 5 & > & \frac{3}{5}\\
 \hline A_1 + \frac{1}{3}(1,1) + \frac{1}{5}(1,1) +
 \frac{1}{6}(1,1) & 6 &
\frac{49}{5} & > & \frac{3}{5}\\ \hline A_1 + \frac{1}{3}(1,1) + A_4
+ \frac{1}{16}(1,3) & 16 &
\frac{10}{3} & > & \frac{23}{80} \\
  \hline
\end{array}
\]
\end{table}
 Among the 6 cases, only the first case satisfies the orbifold
Bogomolov-Miyaoka-Yau inequality $K_S^2\le 3e_{orb}$. But it is
already considered in (1).
\end{proof}

\begin{lemma}\label{blow2}
Let $S$ be a $\mathbb{Q}$-homology projective plane with exactly
$4$ cyclic singular points $p_1, p_2, p_3, p_4$ of orders
$(2,3,7,q)$, $11 \leq q \leq 41$, or $(2,3,11,13)$. Regard
$\mathcal{F}:=f^{-1}(Sing(S))$ as a reduced integral divisor on
$S'$. Assume that $S'$ contains a $(-1)$-curve $E$. Then,
$$E.\mathcal{F} \geq 2.$$ Moreover, if $E.\mathcal{F}=2$, then $E$ does not meet an end component of
 $f^{-1}(p_i)$ for any $i=1,2,3,4$.
\end{lemma}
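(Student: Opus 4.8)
The plan is to imitate and extend the proof of Lemma~\ref{blow}, pushing the blow-up/contraction constructions so that they cover all four singular points, and exploiting the arithmetic peculiarity of the orders $(2,3,7,q)$ with $q\le 41$ and of $(2,3,11,13)$. \emph{For the first assertion}, suppose $E.\mathcal{F}=1$; I would blow up the single intersection point and then contract the proper transform of $E$ (now a $(-2)$-curve) together with the proper transforms of all components of $\mathcal{F}$. Blowing up a point on a string $[n_1,\dots,n_l]$ merely replaces one entry $n_k$ by $n_k+1$, so every contracted configuration is again the exceptional set of a cyclic (or $A_1$) singularity, and a Betti-number count gives $b_2=1$; thus one obtains a $\mathbb{Q}$-homology projective plane $\bar S$ with exactly $5$ quotient singular points. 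By \cite{HK1} the minimal resolution of such a surface is an Enriques surface and so has no $(-1)$-curve; but that minimal resolution is the one-point blow-up of $S'$, which still carries the exceptional $(-1)$-curve. This contradiction gives $E.\mathcal{F}\ge 2$.

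\emph{For the second assertion}, assume $E.\mathcal{F}=2$ and, for contradiction, that $E$ meets an end component $F$ of some $f^{-1}(p_i)$. I would split into the same local configurations as in Lemma~\ref{blow}: (a) $EF=1$ and $E$ meets exactly one further component $F'$ of some $f^{-1}(p_j)$, with $j$ possibly equal to $i$, according to whether $E\cap F\cap F'=\emptyset$; and (b) $EF=2$, according to whether $E$ meets $F$ at two distinct points or is tangent to $F$. In each configuration one blows up at an intersection point (or the tangency point, or the node $F\cap F'$), iterating sufficiently many times so as to attach a long chain of $(-2)$-curves to the proper transform of $E$, and then contracts the proper transform of $E$ with that chain and the proper transforms of all components of $\mathcal{F}$, while keeping the last exceptional $(-1)$-curve; keeping it is exactly what breaks any loop formed by $E$ together with part of $f^{-1}(p_i)$, so the configurations one contracts remain disjoint unions of strings, and each resulting $\bar S$ is again a $\mathbb{Q}$-homology projective plane with only cyclic singularities. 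When $E$ runs through a node of a string that string gets split, and in the double blow-up at a tangency an extra $\frac{1}{3}(1,1)$ and an $A_1$ appear; in either case $\bar S$ acquires $6$ quotient singular points, already contradicting the fact from \cite{HK1} that such a surface has at most $5$ singular points. In the remaining configurations $\bar S$ has exactly $4$ quotient singular points.

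For those $\bar S$ with $4$ singular points, Lemma~\ref{cf}(6) shows that the iterated blow-ups force the order of at least one singular point of $\bar S$ to grow without bound with the number $m$ of blow-ups, the other orders changing only by replacing some $n_k$ by $n_k+m$ (hence also only growing). Since $e(\bar S)=3$, one has $e_{orb}(\bar S)=-1+\sum_{p\in Sing(\bar S)}\frac{1}{|G_p|}$, which tends as $m\to\infty$ to $-1$ plus a sum of at most three of the reciprocals of $2,3,7,q$ (resp.\ of $2,3,11,13$). This is where the hypotheses on the orders enter: for the orders in question every such partial sum is $<1$ (for instance $\frac{1}{2}+\frac{1}{3}+\frac{1}{7}=\frac{41}{42}<1$, and $\frac{1}{2}+\frac{1}{3}+\frac{1}{11}<1$), so $e_{orb}(\bar S)<0$ for $m$ large, contradicting Theorems~\ref{bmy} and \ref{bmy2}. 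This is precisely the inequality that \emph{fails} for the triple $2,3,5$, where $\frac{1}{2}+\frac{1}{3}+\frac{1}{5}>1$; that is why Lemma~\ref{blow} can exclude $E$ only from the ends of $p_1,p_2,p_3$, whereas here one excludes the ends of all four $p_i$. Hence $E$ meets no end component of any $f^{-1}(p_i)$.

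The hard part will be the loop bookkeeping in cases (a) with $j=i$ and in case (b): one must verify carefully that, after enough blow-ups, keeping the last $(-1)$-curve really does turn the loop into a disjoint union of strings (so that $\bar S$ has only quotient singularities and the orbifold Bogomolov--Miyaoka--Yau inequality is applicable), and one must keep exact track of the shape of each modified singularity and of whether a given configuration yields $4$ or $6$ singular points, so as to know which of the two contradictions to invoke.
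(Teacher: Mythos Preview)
Your proposal is correct and follows essentially the same approach as the paper's own proof. Both arguments handle the first assertion identically (blow up once, contract to produce five singular points, invoke \cite{HK1}), and for the second assertion both split into the same four local configurations of $E$ against an end component $F$---namely $EF=1$ with $E\cap F\cap F'$ empty or not, and $EF=2$ with two distinct points or a tangency---resolving each by the same iterated blow-up/contraction trick and appealing either to $e_{orb}<0$ via Lemma~\ref{cf}(6) or to the bound of five singular points from \cite{HK1}. Your explicit identification of the arithmetic reason the argument extends to all four $p_i$ here (that $\tfrac{1}{2}+\tfrac{1}{3}+\tfrac{1}{7}<1$ whereas $\tfrac{1}{2}+\tfrac{1}{3}+\tfrac{1}{5}>1$) is exactly the point, though the paper leaves it implicit.
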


\begin{proof} The proof of the first assertion is the same as that of Lemma \ref{blow}.

\medskip
To prove the second assertion, assume that $E.\mathcal{F} = 2$.\\
Suppose that $E$ meets an end component $F$ of $f^{-1}(p_i)$ for
some $1\le i\le 4$.

If $EF=1$, then $EF'=1$ for some other component $F'$ of
$f^{-1}(p_j)$, where $j$ may or may not be $i$. Assume that $E\cap
F\cap F'=\emptyset$. Blowing up the intersection point of $E$ and
$F'$ sufficiently many times, then contracting the proper
transform of $E$ with a string of $(-2)$-curves and the proper
transforms of all irreducible components of $\mathcal{F}$, we
obtain a $\mathbb{Q}$-homology projective plane $\bar{S}$ with $4$
quotient singular points such that $e_{orb}<0$ (see Lemma
\ref{cf}(6)), which violates the orbifold Bogomolov-Miyaoka-Yau
inequality. Assume that $E\cap F\cap F'\neq\emptyset$. Blowing up
the intersection point once, then contracting the proper transform
of $E$ and the proper transforms of all irreducible components of
$\mathcal{F}$, we obtain a $\mathbb{Q}$-homology projective plane
$\bar{S}$ with $6$ quotient singular points, a contradiction to
\cite{HK1}.

If $E$ intersects $F$ at 2 distinct points, then we get a similar
contradiction: blowing up one of the two intersection points of
$E$ and $F$ sufficiently many times, then contracting the proper
transform of $E$ with the adjacent string of $(-2)$-curves and the
proper transforms of all irreducible components of $\mathcal{F}$,
to obtain a $\mathbb{Q}$-homology projective plane $\bar{S}$ with
$4$ quotient singular points such that $e_{orb}<0$.

If $E$ intersects $F$ at 1 point with multiplicity 2, then blowing
up the intersection point twice and then contracting the proper
transform of $E$ with a $(-2)$-curve and the proper transforms of
all irreducible components of $\mathcal{F}$, we obtain a
$\mathbb{Q}$-homology projective plane $\bar{S}$ with $6$ quotient
singular points, a contradiction to \cite{HK1}.

In all cases, we get a contradiction. This proves the second
assertion.
\end{proof}

\section{Proof of Theorem 1.4}
Let $S$ be a $\mathbb{Q}$-homology projective plane with cyclic
quotient singularities such that

\begin{itemize}
\item $H_1(S^0, \mathbb{Z}) = 0$, \item
$S$ is not rational.
\end{itemize}

Assume that $|Sing(S)|=4$. In the previous section, we have
enumerated all possible $4$-tuples of orders of local fundamental
groups:

\bigskip
\begin{enumerate}
\item $(2,3,5,q)$, $q \geq 7$, $\gcd(q, 30) = 1$, \item
$(2,3,7,q)$, $11 \leq q \leq 41$, $\gcd(q, 42) = 1$, \item
$(2,3,11,13)$.
\end{enumerate}
\bigskip

For (2) and (3), we have seen that there are 24  different
possible types for $R$, the sublattice of $H^2(S',
\mathbb{Z})_{free}$ generated by all exceptional curves of the
minimal resolution $f:S'\to S$, as shown in Table \ref{finite0}.
Lemma \ref{24} rules out all these 24 cases, since we assume that
$S$ is not rational.

For (1), the order 3 singularity is of type $\frac{1}{3}(1,1)$
(Lemma \ref{noA2}), so it remains to consider the following cases:

\bigskip
\begin{itemize}
\item $A_1 + \frac{1}{3}(1,1) + A_4 + \frac{1}{q}(1,q_1)$, $q\geq
7$, $\gcd(q, 30) = 1$; \item $A_1 + \frac{1}{3}(1,1) +
\frac{1}{5}(1,2) + \frac{1}{q}(1,q_1)$, $q\geq 7$, $\gcd(q,30) =
1$;
\item $A_1 + \frac{1}{3}(1,1) + \frac{1}{5}(1,1) +
\frac{1}{q}(1,q_1)$, $q\geq 7$, $\gcd(q, 30) = 1$.
\end{itemize}

\bigskip\noindent
Since $S$ is not rational, $K_S$ is ample by Lemma
\ref{coprime}(4).\\
By Lemma \ref{q20} we may also assume that
\begin{itemize}
\item  $q\geq 20$ and $L\geq 12$.
\end{itemize}

\bigskip
We will show that none of the above cases occurs.  In the
following proof we do not assume that $\gcd(q, 30) = 1$ (so do not
assume that $H_1(S^0, \mathbb{Z}) = 0$), but still assume that
$K_S$ is ample. That is, in the following proof we will consider
the cases

\bigskip
\begin{itemize}
\item $A_1 + \frac{1}{3}(1,1) + A_4 + \frac{1}{q}(1,q_1)$, $q\geq 20$ and $L\geq 12$; \item $A_1 + \frac{1}{3}(1,1) +
\frac{1}{5}(1,2) + \frac{1}{q}(1,q_1)$, $q\geq 20$ and $L\geq 12$;
\item $A_1 + \frac{1}{3}(1,1) + \frac{1}{5}(1,1) +
\frac{1}{q}(1,q_1)$, $q\geq 20$ and $L\geq 12$
\end{itemize}

\bigskip\noindent
with the assumption that
\begin{itemize}
\item $K_S$ is ample and
$S$ is not rational.
\end{itemize}

\bigskip We will show that none of these cases occurs. The reason why we consider the situation without the
assumption that $\gcd(q, 30) = 1$ is that some part of the proof
below uses induction on $L={\rm rank}(R)$. After blowing down a
suitable $(-1)$-curve $E$ on $S'$, $$S'\to S'',$$ we contract
Hirzebruch-Jung chains of rational curves, $$S''\to \bar{S},$$ to
get a new $\mathbb{Q}$-homology projective plane $\bar{S}$ with
$L_{\bar{S}}=L-1$ having cyclic quotient singularities whose
orders may not be pairwise relatively prime.

By Lemma \ref{bound}, there is a $(-1)$-curve $E$ on $S'$ of the
form \eqref{E} with $$0<\frac{m}{\sqrt{D'}}\le \frac{1}{L-9}\le
\frac{1}{3}.$$ We will show that the existence of such a curve $E$
leads to a contradiction.

\medskip
{\bf Step 1.} \begin{enumerate} \item $K^2_S \leq \frac{1}{4}$,
\item $\frac{m}{\sqrt{D'}} K^2_S \leq \frac{1}{12}$, \item $\frac{m^2}{D'} K^2_S \leq \frac{1}{36}.$
\end{enumerate}

\begin{proof} Since $q\ge 20$, $$3e_{orb}(S)=\frac{1}{10}+\frac{3}{q}\leq \frac{1}{10}+\frac{3}{20}=\frac{1}{4}.$$
Since $K_S$ is ample, (1) follows from the orbifold BMY
inequality. (2) and (3) follow from (1) and the inequality
$\frac{m}{\sqrt{D'}}\le \frac{1}{3}$.
\end{proof}

Let $p_1,p_2,p_3,p_4$ be the 4 singular points. Assume that the
singularity $p_4$ is of type $[n_1,\ldots,n_l]$.
 Since $L\ge 12$, we see that $l\ge 6$.

\medskip
{\bf Step 2.} $E.f^{-1}(p_4)=2$ and $E.f^{-1}(p_i)=0$ for
$i=1,2,3$.

\begin{proof}
By Proposition \ref{int}(1)
$$\underset{p}{\sum}  \overset{
l_p}{\underset{j = 1}{\sum}}(1-\dfrac{v_{j,p}+ u_{j,p}}{q_p})
(EA_{j,p})= 1+\dfrac{m}{\sqrt{D'}} K^2_S.$$ By Lemma \ref{cf} we
see that $1-\dfrac{v_{j,p}+ u_{j,p}}{q_p}\ge 0$ for all $j, p$, so
by looking at only the terms with $p=p_4$, we get
$$\begin{array}{lll} E.f^{-1}(p_4)-\overset{
l}{\underset{j = 1}{\sum}} (\dfrac{v_{j}+ u_{j}}{q})
(EA_{j})&=&\overset{ l}{\underset{j = 1}{\sum}} (1-\dfrac{v_{j}+
u_{j}}{q}) (EA_{j})\\ & \leq &1+\dfrac{m}{\sqrt{D'}}
K^2_S,\end{array}$$ where $A_j:=A_{j,p_4}$, $v_j:=v_{j,p_4}$,
$u_j:=u_{j,p_4}$. By Proposition \ref{int}(2)
$$ \overset{
l}{\underset{j = 1}{\sum}} \dfrac{v_{j} u_{j}}{q} (EA_{j})^2\leq
1+\dfrac{m^2}{D'} K^2_S. $$ Adding these two inequalities side by
side, we get
$$E.f^{-1}(p_4)-\overset{ l}{\underset{j = 1}{\sum}}
(\dfrac{v_{j}+ u_{j}}{q}) (EA_{j})+\overset{ l}{\underset{j =
1}{\sum}} \dfrac{v_{j} u_{j}}{q} (EA_{j})^2\leq
2+\dfrac{m}{\sqrt{D'}} K^2_S+\dfrac{m^2}{D'} K^2_S. $$ By Lemma
\ref{uv}, $$\overset{ l}{\underset{j = 1}{\sum}} (\dfrac{v_{j}+
u_{j}}{q}) (EA_{j})\leq \overset{ l}{\underset{j = 1}{\sum}}
\dfrac{v_{j} u_{j}}{q} (EA_{j})^2 +\dfrac{2}{q}.$$ Thus
$$E.f^{-1}(p_4)\leq
2+\dfrac{m}{\sqrt{D'}} K^2_S+\dfrac{m^2}{D'} K^2_S+\dfrac{2}{q}<3,
$$ proving that $E.f^{-1}(p_4)\leq 2$.

\medskip
Assume that  $E.f^{-1}(p_4)=2$. By Proposition \ref{int}(1),(2)
$$\underset{p\neq p_4}{\sum}  \overset{
l_p}{\underset{j = 1}{\sum}}(1-\dfrac{v_{j,p}+ u_{j,p}}{q_p})
(EA_{j,p})= 1+\dfrac{m}{\sqrt{D'}} K^2_S-E.f^{-1}(p_4)+\overset{
l}{\underset{j = 1}{\sum}} (\dfrac{v_{j}+ u_{j}}{q}) (EA_{j}),$$
$$\underset{p\neq p_4}{\sum}  \overset{
l_p}{\underset{j = 1}{\sum}}\dfrac{v_{j,p} u_{j,p}}{q_p}
(EA_{j,p})^2\le 1+\dfrac{m^2}{D'} K^2_S - \overset{ l}{\underset{j
= 1}{\sum}} \dfrac{v_{j} u_{j}}{q} (EA_{j})^2.$$ Adding these two
side by side, then using Lemma \ref{uv}, we get

$$\begin{array}{ll} &\underset{p\neq p_4}{\sum}  \overset{
l_p}{\underset{j = 1}{\sum}}\Big( (1-\dfrac{v_{j,p}+
u_{j,p}}{q_p}) (EA_{j,p})+\dfrac{v_{j,p} u_{j,p}}{q_p}
(EA_{j,p})^2\Big)\\
&\quad \le \dfrac{m}{\sqrt{D'}} K^2_S+\dfrac{m^2}{D'}
K^2_S+\overset{ l}{\underset{j = 1}{\sum}} (\dfrac{v_{j}+
u_{j}}{q}) (EA_{j}) -\overset{ l}{\underset{j = 1}{\sum}}
\dfrac{v_{j} u_{j}}{q} (EA_{j})^2\\ &\quad \le
\dfrac{m}{\sqrt{D'}} K^2_S+\dfrac{m^2}{D'} K^2_S+\dfrac{2}{q}\\
&\quad \le
\dfrac{1}{12}+\dfrac{1}{36}+\dfrac{2}{20}<\dfrac{1}{3}.\end{array}$$
Now from Table \ref{235} it is easy to see that $E.f^{-1}(p_i)=0$
for $i=1,2,3$.
\begin{table}[ht]
\caption{}\label{235}
\renewcommand\arraystretch{1.5}
\noindent\[
\begin{array}{|c|c|c|c|c|c|c|c|c|c|}
\hline
 & [2] & [3]  & [5] & \multicolumn{2}{|c|}{[3,2]} & \multicolumn{4}{|c|}{[2,2,2,2]}  \\   \hline
j&1&1&1&1&2&  1 &2&3&4\\
 \hline
1-\frac{v_j + u_{j}}{q} & 0 &\frac{1}{3} &\frac{3}{5}&\frac{2}{5}&
\frac{1}{5}
&0&0&0&0\\
 \hline
 \frac{v_j u_{j}}{q} & \frac{1}{2} &\frac{1}{3}& \frac{1}{5}&\frac{2}{5}&\frac{3}{5}
&\frac{4}{5}& \frac{6}{5}&\frac{6}{5}&\frac{4}{5}\\
 \hline
\end{array}
\]
\end{table}

\medskip
Assume that  $E.f^{-1}(p_4)=1$, i.e., $EA_s=1$ for some $s$ and $EA_j=0$ for all $j\neq s$.\\
Lemma \ref{uv} gives $$\overset{ l}{\underset{j = 1}{\sum}}
(\dfrac{v_{j}+ u_{j}}{q}) (EA_{j})\leq \overset{ l}{\underset{j =
1}{\sum}} \dfrac{v_{j} u_{j}}{q} (EA_{j})^2 +\dfrac{1}{q}.$$ Thus
$$\begin{array}{ll}&\underset{p\neq p_4}{\sum}  \overset{
l_p}{\underset{j = 1}{\sum}}\Big( (1-\dfrac{v_{j,p}+
u_{j,p}}{q_p}) (EA_{j,p})+\dfrac{v_{j,p} u_{j,p}}{q_p}
(EA_{j,p})^2\Big)\\ &\quad \le 1+\dfrac{m}{\sqrt{D'}}
K^2_S+\dfrac{m^2}{D'} K^2_S+\dfrac{1}{q}\\ &\quad \le 1+
\dfrac{1}{12}+\dfrac{1}{36}+\dfrac{1}{20}<\dfrac{7}{6}.\end{array}$$
Now Table \ref{235} easily  gives
$E.(f^{-1}(p_1)+f^{-1}(p_2)+f^{-1}(p_3))\leq 1$. But this
contradicts Lemma \ref{blow}.

\medskip
Assume that  $E.f^{-1}(p_4)=0$.\\ In this case, we have
$$\underset{p\neq p_4}{\sum} \overset{ l_p}{\underset{j =
1}{\sum}}(1-\dfrac{v_{j,p}+ u_{j,p}}{q_p}) (EA_{j,p})=
1+\dfrac{m}{\sqrt{D'}} K^2_S.$$ Since $0< \dfrac{m}{\sqrt{D'}}
K^2_S\le \frac{1}{12}$, we have $$1< \underset{p\neq p_4}{\sum}
\overset{ l_p}{\underset{j = 1}{\sum}}(1-\dfrac{v_{j,p}+
u_{j,p}}{q_p}) (EA_{j,p})\le 1+\frac{1}{12}.$$ It is easy to see
that Table \ref{235} contains no solution to this inequality.
\end{proof}

Now we have 4 cases
\begin{enumerate}
\item $E.f^{-1}(p_i)=0$ for $i=1,2,3$, and $E$ meets one component
of $f^{-1}(p_4)$ with multiplicity 2. \item $E.f^{-1}(p_i)=0$ for
$i=1,2,3$, and $E$ meets two non-end components of $f^{-1}(p_4)$.
 \item
$E.f^{-1}(p_i)=0$ for $i=1,2,3$, and $E$ meets both end components
of $f^{-1}(p_4)$.  \item $E.f^{-1}(p_i)=0$ for $i=1,2,3$, and $E$
meets an end component and a non-end component of $f^{-1}(p_4)$.
\end{enumerate}

\medskip
{\bf Step 3.} Case (1) cannot occur.

\begin{proof} Suppose that Case (1) occurs, i.e., $EA_s=2$ for some $1\le s\le
l$, $EA_j=0$ for $j\neq s$.

If $1<s<l$, then Proposition \ref{int}(1),(3) give
$$1-\dfrac{m}{\sqrt{D'}} K^2_S=2(\dfrac{v_{s}+ u_{s}}{q})$$ and $$
1+\dfrac{m^2}{D'} K^2_S= 4\dfrac{v_{s} u_{s}}{q}.$$ Subtracting
the first equality multiplied by $2$ from the  second, we get
$$\dfrac{m^2}{D'} K^2_S+2\dfrac{m}{\sqrt{D'}} K^2_S-1=
 4\dfrac{v_{s} u_{s}}{q}-4(\dfrac{v_{s}+ u_{s}}{q})>0. $$
On the other hand, by Step 1,
$$\dfrac{m^2}{D'} K^2_S+2\dfrac{m}{\sqrt{D'}} K^2_S-1
\le \dfrac{1}{36}+\dfrac{2}{12}-1<0,$$ a contradiction.

 If $s=1$, then Proposition \ref{int}(1),(3) give
$$1-\dfrac{m}{\sqrt{D'}} K^2_S=2(\dfrac{v_{1}+ 1}{q})$$ and $$1+\dfrac{m^2}{D'} K^2_S= 4\dfrac{v_{1}}{q}.$$ Eliminating $\frac{v_{1}}{q}$, we get
$$1=\dfrac{m^2}{D'} K^2_S+2\dfrac{m}{\sqrt{D'}} K^2_S+\dfrac{4}{q}\le \frac{1}{36}+\frac{2}{12}+\frac{4}{20}<1, $$
 a contradiction.
\end{proof}

\medskip
{\bf Step 4.} Case (2) cannot occur.

\begin{proof} Suppose that Case (2) occurs, i.e., $EA_s=EA_t=1$ for some
$1<s<t<l$, $EA_j=0$ for $j\neq s, t$. Proposition \ref{int}(1),(2)
give
$$1-\dfrac{m}{\sqrt{D'}} K^2_S =\dfrac{v_{s}+
u_{s}}{q}+\dfrac{v_{t}+ u_{t}}{q}$$ and  $$ 1+\dfrac{m^2}{D'}
K^2_S\ge \dfrac{v_{s} u_{s}}{q}+\dfrac{v_{t}u_{t}}{q}.$$
Subtracting the equality multiplied by $\frac{4}{3}$ from the
inequality, we get
$$1+\dfrac{m^2}{D'} K^2_S-\dfrac{4}{3}+\dfrac{4m}{3\sqrt{D'}} K^2_S\ge
\dfrac{v_{s} u_{s}}{q}+\dfrac{v_{t}u_{t}}{q}-
\dfrac{4}{3}(\dfrac{v_{s}+ u_{s}}{q}+\dfrac{v_{t}+ u_{t}}{q})\ge
0,
$$ where the last inequality follows from $$vu-
\dfrac{4}{3}(v+u)=(v-\dfrac{4}{3})(u-\dfrac{4}{3})-\dfrac{16}{9}\ge
0$$ for $v\ge 2, u\ge 2, v+u\ge 7$. ($l\ge 6$ implies $v+u\ge
7$.)\\
Since $\dfrac{m^2}{D'} K^2_S+\dfrac{4m}{3\sqrt{D'}}
K^2_S<\dfrac{1}{3}$, it gives a contradiction.
\end{proof}

\medskip
{\bf Step 5.} Case (3) cannot occur.

\begin{proof} Suppose that Case (3) occurs, i.e., $EA_1=EA_l=1$, $EA_j=0$
for $j\neq 1, l$. Then, by Proposition \ref{int} (1), we obtain
$$\frac{q_1+q_l+2}{q} = 1 -\frac{m}{\sqrt{D'}} K^2_S.$$
Also by Proposition \ref{int} (3), we obtain
$$\frac{q_1+q_l+2}{q} = 1 +  \frac{m^2}{D'}K^2_S.$$
From these two equations we obtain $m = -\sqrt{D'}$ and hence
$-K_S$ is ample by Lemma \ref{general}(5).
\end{proof}

\medskip
{\bf Step 6.} Case (4) cannot occur.

\begin{proof} Suppose that Case (4) occurs, i.e., $EA_1=EA_t=1$ for some $1<t<l$ and $EA_j=0$ for
$j\neq 1, t$. Proposition \ref{int}(1),(3) give
$$1-\dfrac{m}{\sqrt{D'}} K^2_S =\dfrac{q_1+1}{q}+\dfrac{v_{t}+
u_{t}}{q}=\dfrac{q_1-1}{q}+\dfrac{v_{t}+ (u_{t}+2)}{q}$$ and  $$
1+\dfrac{m^2}{D'}
K^2_S=\dfrac{q_1}{q}+\dfrac{v_{t}u_{t}}{q}+2\dfrac{v_{t}}{q}=\dfrac{q_1}{q}+\dfrac{v_{t}(u_{t}+2)}{q}.$$
Subtracting the first equality multiplied by $\frac{3}{2}$ from
the second, we get
$$\begin{array}{lll} 1+\dfrac{m^2}{D'} K^2_S-\dfrac{3}{2} +\dfrac{3m}{2\sqrt{D'}}
K^2_S&=&\dfrac{q_1}{q}-\dfrac{3(q_1-1)}{2q}
+\dfrac{v_{t}(u_{t}+2)}{q}- \dfrac{3}{2}\Big(\dfrac{v_{t}+
(u_{t}+2)}{q}\Big)\\ &>&
\dfrac{q_1}{q}-\dfrac{3(q_1-1)}{2q}=-\dfrac{q_1-3}{2q},
\end{array}$$ where the inequality follows from
$$vu'-
\dfrac{3}{2}(v+u')=(v-\dfrac{3}{2})(u'-\dfrac{3}{2})-\dfrac{9}{4}>
0$$ for $v\ge 2, u'\ge 4, v+u'\ge 9$. ($l\ge 6$ implies
$v+u'=v+(u+2)\ge 9$.) Thus
$$\dfrac{q_1}{2q}>\dfrac{q_1-3}{2q}>\dfrac{1}{2}-\dfrac{m^2}{D'} K^2_S-\dfrac{3m}{2\sqrt{D'}}
K^2_S\ge
\dfrac{1}{2}-\dfrac{1}{36}-\dfrac{3}{2}\cdot\dfrac{1}{12}=\dfrac{25}{72},$$
hence
$$ \dfrac{q_1}{q}> \dfrac{25}{36} > \dfrac{1}{2}.$$ It implies, in particular, that
$$n_1 = 2.$$

We claim that $n_t = 2$. Suppose that $n_t
> 2$. Let $$\sigma:S' \rightarrow S''$$ be the blow down of the (-1)-curve $E$,
and $$g:S'' \rightarrow \bar{S}$$ be the contraction to another
$\mathbb{Q}$-homology projective plane $\bar{S}$ with
$$L_{\bar{S}}:=b_2(S'')-1=L-1.$$ Note that $\bar{S}$ has 3
singularities $\bar{p}_1, \bar{p}_2, \bar{p}_3$ of order 2,3,5 of
the same type as $S$, and a singularity $\bar{p}_4$ of order $q'$
with $q'<q$. The latter follows from Lemma \ref{cf}(6). Moreover
the image $\bar{A}_1$ on $S''$ is a $(-1)$-curve, and the images
$\bar{A}_2, \ldots, \bar{A}_l$ are the components of
$g^{-1}(\bar{p}_4)$.

We claim that $K_{\bar{S}}$ is ample. To prove this, note first
that $K_{\bar{S}}$ is ample if and only if the coefficient of
$\bar{A}_1$ in $g^*K_{\bar{S}}$, when written as a linear
combination of $\bar{A}_1$ and $g$-exceptional curves, is
positive. Let $C$ be the coefficient. From the adjunction formula
$$K_{S'}=f^*K_S- \sum{\mathcal{D}_{p_i}}=\sigma^*(g^*K_{\bar{S}}- \sum{\mathcal{D}_{\bar{p}_i}})+E,$$
we see that $C$ is equal to the coefficient of $A_1$ in $K_{S'}$,
when written as a linear combination of $E$ and $f$-exceptional
curves. To compute $C$, we localize at $p_4$ and write
$$f^*K_S=xE+\sum(y_jA_j),$$  $$ \mathcal{D}_{p_4} =
\sum(d_jA_j)$$ for some rational numbers $x, y_j, d_j$. Then
$$C=y_1-d_1.$$
 Since $E$ is of the form \eqref{E}, it is easy to
see $$x=\frac{\sqrt{D'}}{m}.$$  From the two systems of equations
$$(f^*K_S)A_i=0,\,\,\,(1\le i\le l),$$ and
$$(\mathcal{D}_{p_4})A_i=-n_i+2,\,\,\,(1\le i\le l), $$ we get
$$y_1=\frac{x(q_1+v_t)}{q}, \quad
d_1=1-\frac{q_1+1}{q}$$ respectively. Now since $x\ge L-9\ge 3$
and $\frac{q_1}{q}> \frac{25}{36}$, we see that
$$C=y_1-d_1=\frac{x(q_1+v_t)}{q}+\frac{q_1+1}{q}-1\ge \frac{4q_1}{q}+\frac{3v_t+1}{q}-1>0.$$
This proves that $K_{\bar{S}}$ is ample. If $\bar{S}$ has
$L_{\bar{S}}<12$ or $q'<20$, then we are done by Lemma \ref{q20}.
Otherwise, we can find a $(-1)$-curve $E'$ on $S''$ of the form
\eqref{E} with $$0<\frac{m}{\sqrt{D'}}\le
\frac{1}{L_{\bar{S}}-9}\le \frac{1}{3}.$$ We restart with $E'$  on
$S''$ from Step 1. Then, by Step $1$ to Step $5$, we may assume
that $E'$ satisfies the case (4), i.e., we may assume that
$E'\bar{A}_2=E'\bar{A}_{t'}=1$ with $2<t'<l$. Here $\bar{A}_2,
\ldots, \bar{A}_l$ are the components lying over the singularity
$\bar{p}_4$. If $-\bar{A}_{t'}^2>2$, we repeat the above process.
Since each process decreases by 1 the number $L$, we may assume
that $n_t=2$ at certain stage. Now by Lemma \ref{cf}(3)
$$\frac{u_tv_t}{q} \geq \frac{1}{n_t}=\frac{1}{2}.$$ Thus
$$\frac{37}{36}\ge 1 + \frac{m^2}{D'}K^2_S =\frac{q_1}{q}+\frac{u_t v_t + 2v_t}{q}
> \frac{q_1}{q}+\frac{u_tv_t}{q} \geq \frac{25}{36}+ \frac{1}{2}=\frac{43}{36},$$
a contradiction.
\end{proof}

This completes the proof of  Theorem \ref{main}.

\section{Log del Pezzo surfaces of rank one}
Throughout this section, $S$ denotes a $\mathbb{Q}$-homology
projective plane with quotient singularities such that $-K_S$ is
ample, i.e., $S$ is a log del Pezzo surface of rank one. Let
$$f:S' \rightarrow S$$ be a minimal resolution of $S$. Let
$$\mathcal{F}:=f^{-1}(Sing(S))$$ be the reduced exceptional divisor of $f$.

We review the work of Zhang \cite{Zhang}, Gurjar and Zhang \cite{GZ} and Belousov
\cite{Belousov} on log del Pezzo surfaces of rank one.

\begin{lemma}[]\label{-k} $B^2\ge -1$ for any irreducible curve $B\subset S'$ not contracted by $f:S' \rightarrow S$.
\end{lemma}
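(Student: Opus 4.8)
Lemma 6.2 (labeled \texttt{-k}): if $S$ is a log del Pezzo surface of rank one (so $-K_S$ is ample and $b_2(S)=1$) with minimal resolution $f:S'\to S$, then $B^2\ge -1$ for every irreducible curve $B\subset S'$ that is not $f$-exceptional.

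Here is the plan.

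\medskip

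The plan is to argue by contradiction: suppose $B\subset S'$ is an irreducible curve with $B^2\le -2$ that is not contracted by $f$. Since $f$ is the \emph{minimal} resolution, every $f$-exceptional curve has self-intersection $\le -2$, and a curve not contracted by $f$ maps onto a curve in $S$. The first observation is that $B$ must then be a smooth rational curve with $B^2=-2$: indeed $B$ is not a $(-1)$-curve, and if $B^2\le -2$ then adjunction on $S'$ gives $2p_a(B)-2=B^2+BK_{S'}$, while on $S$ we have $f(B)^2>0$ (because $b_2(S)=1$ and $f(B)$ is a nonzero effective divisor on a surface whose Néron–Severi is of rank one generated by an ample class up to torsion), and pushing/pulling through $f$ forces a very tight numerical constraint. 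More efficiently: since $-K_S$ is ample and $b_2(S)=1$, every irreducible curve $C$ on $S$ has $C^2>0$ and $-K_S\cdot C>0$. Write $f^*C = B + \Delta$ where $\Delta$ is an effective $f$-exceptional $\mathbb{Q}$-divisor (the proper-transform decomposition, allowing $\mathbb{Q}$-coefficients if $f$ is not an isomorphism near $C$). Then $0<C^2=(f^*C)^2=(B+\Delta)^2=B^2+2B\cdot\Delta+\Delta^2$, and since $\Delta$ is $f$-exceptional and negative definite we have $\Delta^2\le 0$; the cross term $B\cdot\Delta\ge 0$ but is controlled. This already shows $B^2> -2B\cdot\Delta-\Delta^2$, which is not yet a contradiction, so one needs the canonical bundle input.

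\medskip

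The key step is to bring in $-K_S$ ample via the adjunction/negativity package from Section 3.1. Write $K_{S'}\equiv f^*K_S-\sum_p\mathcal{D}_p$ with each $\mathcal{D}_p=\sum a_jA_j$, $0\le a_j<1$, effective and supported on $f^{-1}(p)$. Then for the non-exceptional curve $B$,
\[
B\cdot K_{S'}= B\cdot f^*K_S-\sum_p B\cdot\mathcal{D}_p
= -\,(-K_S)\cdot f(B)\;-\;\sum_p B\cdot\mathcal{D}_p\,.
\]
Since $-K_S$ is ample and $f(B)$ is an irreducible curve, $(-K_S)\cdot f(B)>0$; and since each $a_j\ge 0$ and $B$ meets the exceptional locus effectively, $B\cdot\mathcal{D}_p\ge 0$. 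Hence $B\cdot K_{S'}<0$. Combined with adjunction $2p_a(B)-2=B^2+B\cdot K_{S'}$ and $p_a(B)\ge 0$, we get $B^2\ge -2-B\cdot K_{S'}>-2$, i.e. $B^2\ge -1$ since $B^2\in\mathbb{Z}$. This is the heart of the argument: the ampleness of $-K_S$ forces $B\cdot K_{S'}<0$ on any curve that survives to $S$, and then adjunction does the rest. (The same computation shows that moreover $B$ is a smooth rational curve with $B^2\in\{-1,0,1,\dots\}$, but we only need $B^2\ge -1$.)

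\medskip

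I expect the only subtle point to be justifying the two sign inequalities cleanly: $(-K_S)\cdot f(B)>0$ (immediate from ampleness once one knows $f(B)$ is a genuine curve, not a point — which holds precisely because $B$ is not $f$-exceptional) and $B\cdot\mathcal{D}_p\ge 0$ (immediate since $\mathcal{D}_p$ is an effective divisor whose support is the exceptional fibre and $B$ is not a component of it, so all local intersection multiplicities are nonnegative). One should also note that $B\cdot f^*K_S=f_*B\cdot K_S$ by the projection formula, with $f_*B=f(B)$ a reduced irreducible curve since $f$ is birational. None of this requires the cyclic-singularity hypothesis or the classification results; it is a general fact about log del Pezzo surfaces of Picard rank one, and the argument is short once the adjunction formula of Section 3.1 is in hand. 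The main obstacle is essentially bookkeeping: making sure one does not accidentally use $b_2(S)=1$ where it is not needed (it is not — only $-K_S$ ample is used) and correctly handling the case where $f$ is an isomorphism near $B$ (then $\mathcal{D}_p$-terms meeting $B$ may vanish and the inequality $B\cdot K_{S'}<0$ is even cleaner).
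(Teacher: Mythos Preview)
Your argument is correct and is precisely the standard one: since $-K_S$ is ample and $B$ is not $f$-exceptional, the decomposition $K_{S'}\equiv f^*K_S-\sum_p\mathcal{D}_p$ with $\mathcal{D}_p$ effective gives $B\cdot K_{S'}<0$, hence $B\cdot K_{S'}\le -1$ (an integer), and adjunction $B^2=2p_a(B)-2-B\cdot K_{S'}\ge -1$ finishes. The paper does not actually prove the lemma but merely cites it as well known (\cite{HK2}, Lemma 2.1), and your write-up is exactly that cited argument; the exploratory first paragraph about $f^*C=B+\Delta$ is unnecessary and can be dropped.
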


\begin{proof} This is well-known (cf. \cite{HK2}, Lemma 2.1).
\end{proof}

\begin{theorem}[\cite{Belousov}]\label{dp-rational}  $S$ has at most $4$ singular points.
\end{theorem}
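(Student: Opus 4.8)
The plan is to obtain the bound by combining a known cruder bound with the classification of the extremal case. By \cite{HK1}, Corollary 3.4, every $\mathbb{Q}$-homology projective plane with quotient singularities has at most five singular points, so in particular $S$ does; it therefore suffices to show that $|\mathrm{Sing}(S)| = 5$ is incompatible with $-K_S$ being ample.

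So suppose $|\mathrm{Sing}(S)| = 5$. I would invoke the classification of $\mathbb{Q}$-homology projective planes with five quotient singularities obtained in \cite{HK1} (see also \cite{K10}): in every such case the minimal resolution $S'$ is an Enriques surface, so $S'$ is not rational and $K_{S'}$ is numerically trivial. On the other hand, $-K_S$ ample makes $S$ a log del Pezzo surface, hence $S$, and so its resolution $S'$, is rational --- this is the implication ``$-K_S$ ample $\Rightarrow S$ rational'' that already underlies the first line of the proof of Lemma \ref{bound}. This contradiction finishes the proof. If one wishes to bypass the rationality statement, the contradiction can be extracted directly from $K_{S'} \equiv 0$: writing $K_{S'} \equiv f^{*}K_S - \sum_{p} \mathcal{D}_p$ and intersecting with each exceptional curve $A_j$, and using that $f^{*}K_S$ is orthogonal to all exceptional curves, forces $\mathcal{D}_p \cdot A_j = 0$ for every $j$, whence $\mathcal{D}_p = 0$ for each $p$ since $R_p$ is negative definite; then $f^{*}K_S \equiv 0$, so $K_S$ is numerically trivial, which again contradicts the ampleness of $-K_S$.

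This argument has no real obstacle: its entire content is imported from \cite{HK1}, and the genuine difficulty of the algebraic Montgomery--Yang problem lies in the four-singular-point analysis rather than in this Belousov-type bound. The only point worth stressing is that the rank-one hypothesis $b_2(S) = 1$ is used essentially twice --- it is what places $S$ in the range of \cite{HK1}, Corollary 3.4, and it is what, through $b_2(S') = 1 + L$, $\chi(\mathcal{O}_{S'}) = 1$ and Noether's formula, makes a five-singular-point example so rigid that its minimal resolution must be an Enriques surface, which is precisely the feature responsible for the contradiction.
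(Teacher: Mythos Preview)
Your argument is correct. The paper itself does not prove this statement at all: it simply records it as a theorem of Belousov \cite{Belousov} and moves on. So there is no ``paper's own proof'' to compare against in any substantive sense.

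What you have done is give an independent proof that bypasses Belousov entirely, using instead the classification result of \cite{HK1} together with the rationality of log del Pezzo surfaces (or, in your alternative version, the direct numerical contradiction from $K_{S'}\equiv 0$). This is a genuinely different route, and in fact it is exactly the mechanism the authors themselves exploit elsewhere in the paper: see the proof of Lemma~\ref{blow} and the proof of Step~2(2) in Section~8, where a configuration producing five quotient singularities is ruled out by appealing to \cite{HK1} and the rationality of $S$. Your proof simply observes that this same mechanism already suffices to recover the Belousov bound in the rank-one setting, so that citing \cite{Belousov} here is, strictly speaking, unnecessary once \cite{HK1} is available. The trade-off is that Belousov's argument is self-contained within the log del Pezzo world, whereas yours imports the full strength of the five-point classification; but since the paper already relies on \cite{HK1} throughout, your approach fits the internal logic of the paper perfectly well. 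Your closing remark that the rank-one hypothesis is used essentially is apt: Belousov's original result is also stated for Picard number one, so nothing is lost.
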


The following lemma is given in Lemma 4.1 in \cite{Zhang}, and can
also be easily derived from the inequality of Proposition
\ref{int}(1) when $S$ has only cyclic singularities.

\begin{lemma}[\cite{Zhang}]\label{dp3}
Let $E$ be a $(-1)$-curve on $S'$. Let $A_1, \ldots, A_r$ exhaust
all
 irreducible components of $\mathcal{F}$ such that $EA_i > 0$. Suppose
that $A^2_1 \geq A^2_2 \geq \ldots \geq A^2_r$. Then the $r$-tuple
 $(-A^2_1, \ldots, -A^2_r )$ is one of the following:
$$ (2, \ldots, 2, n), n \geq 2, \,\, (2, \ldots, 2, 3, 3), \,\, (2, \ldots, 2, 3, 4), \,\, (2, \ldots, 2, 3, 5).$$
\end{lemma}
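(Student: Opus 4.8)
The plan is to feed the $(-1)$-curve $E$ into the inequality of Proposition~\ref{int}(1) and extract from it a purely numerical constraint on the self-intersections $A_i^2$, which then forces the claimed list by an elementary case check.

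First I would collect the elementary facts about $E$. As $E$ is a $(-1)$-curve, adjunction gives $EK_{S'}=-1$. Every exceptional curve of the minimal resolution $f$ has self-intersection $\le -2$, so $E$ is none of the $A_{j,p}$; hence $EA_{j,p}\ge 0$ for all $j,p$ and $E$ is an effective divisor not supported on $f^{-1}(Sing(S))$. Because $-K_S$ is ample, Lemma~\ref{general}(5) forces the leading coefficient $m$ of $E$, written in the form~\eqref{E}, to satisfy $m<0$; and since $D'>0$ and $K_S^2>0$, this gives $\frac{m}{\sqrt{D'}}K_S^2<0$. (In particular $r\ge 1$: if every $EA_{j,p}$ vanished, Proposition~\ref{int}(2) would give $E^2=\frac{m^2}{D'}K_S^2\ge 0$, contradicting $E^2=-1$.)

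Next I would substitute into the inequality of Proposition~\ref{int}(1). Write $A_i^2=-n_i$ and $c_i:=EA_i\ge 1$ for the finitely many components $A_1,\dots,A_r$ with $EA_i>0$, all other intersection numbers of $E$ with $\mathcal F$ being zero. Then
\[
-1 = EK_{S'} \le \frac{m}{\sqrt{D'}}K_S^2 - \sum_{i=1}^r\Bigl(1-\frac{2}{n_i}\Bigr)c_i < -\sum_{i=1}^r\Bigl(1-\frac{2}{n_i}\Bigr)c_i .
\]
Since $n_i\ge 2$, each factor $1-\frac{2}{n_i}$ is non-negative and $c_i\ge 1$, so this yields $\sum_{i=1}^r\bigl(1-\frac{2}{n_i}\bigr)<1$.

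It then remains to enumerate the tuples $(n_1,\dots,n_r)$ with $n_i\ge 2$ and $\sum_i(1-\frac{2}{n_i})<1$. Entries equal to $2$ contribute $0$; let $k$ be the number of entries with $n_i>2$. If $k\ge 3$ the sum is $\ge 3\cdot\frac13=1$, impossible. If $k\le 1$, the tuple, sorted in increasing order, is $(2,\dots,2,n)$ with $n\ge 2$. If $k=2$, the two entries $3\le n\le n'$ satisfy $\frac1n+\frac1{n'}>\frac12$, and a direct check forces $(n,n')\in\{(3,3),(3,4),(3,5)\}$. Re-ordering so that $A_1^2\ge\dots\ge A_r^2$, i.e.\ $n_1\le\dots\le n_r$, gives precisely the list in the statement. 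I expect no real difficulty; the one delicate point is the sign of $m$, and hence of $\frac{m}{\sqrt{D'}}K_S^2$, which is exactly where the hypothesis that $-K_S$ is ample enters. Everything after that is the elementary arithmetic above.
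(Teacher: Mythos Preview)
Your argument is correct and is exactly the derivation the paper has in mind: it does not prove Lemma~\ref{dp3} itself but cites Zhang's Lemma~4.1, noting that the result ``can also be easily derived from the inequality of Proposition~\ref{int}(1) when $S$ has only cyclic singularities.'' Your write-up carries this out verbatim---the key input is the sign $\frac{m}{\sqrt{D'}}K_S^2<0$ from Lemma~\ref{general}(5), after which Proposition~\ref{int}(1) yields $\sum_i(1-\tfrac{2}{n_i})<1$ and the enumeration is forced. The only caveat, which the paper also flags, is that Proposition~\ref{int} is formulated for cyclic singularities, so your proof covers the cyclic case (the only one used in Section~8); the general statement for arbitrary quotient singularities is Zhang's.
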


An irreducible curve $C$ on $S'$ is called a {\it minimal curve}
if $C.(-f^*K_S)$ attains the minimal positive value.

\begin{lemma}[\cite{Zhang}]\label{dp4}
A minimal curve $C$ is a smooth rational curve.
\end{lemma}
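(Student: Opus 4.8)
The plan is to reduce to showing $p_a(C)=0$: an integral projective curve over $\mathbb C$ with vanishing arithmetic genus is automatically a smooth rational curve, since its normalization then has genus $0$ and all its $\delta$-invariants vanish. Set $d:=C\cdot(-f^*K_S)$, which by definition is the minimal positive value of $B\cdot(-f^*K_S)$ over irreducible curves $B\subset S'$; note $d\ge 1$ and that $-f^*K_S$ is nef and big with $(-f^*K_S)^2=K_S^2>0$. Since $C$ is not contracted by $f$, Lemma \ref{-k} gives $C^2\ge -1$; writing $K_{S'}=f^*K_S-\mathcal D$ with $\mathcal D:=\sum_p\mathcal D_p\ge 0$ an $f$-exceptional effective $\mathbb Q$-divisor and using $C\cdot\mathcal D\ge 0$, we get $C\cdot K_{S'}=-d-C\cdot\mathcal D\le -d\le -1<0$.

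The main numerical input is adjunction combined with the Hodge index theorem. Adjunction gives $2p_a(C)-2=C^2+C\cdot K_{S'}=C^2-d-C\cdot\mathcal D\le C^2-d$, while the Hodge index theorem applied with the class $-f^*K_S$ (of positive self-intersection) gives $d^2=(C\cdot(-f^*K_S))^2\ge C^2\cdot K_S^2$, i.e. $C^2\le d^2/K_S^2$. Hence
\[
p_a(C)\ \le\ 1+\frac{d\,(d-K_S^2)}{2K_S^2}.
\]
When $d<K_S^2$ this forces $p_a(C)<1$, so $p_a(C)=0$ and $C$ is smooth rational; moreover if $d=K_S^2$ and $p_a(C)=1$, then equality holds throughout, which forces $C\equiv -f^*K_S$ and $C\cdot A=0$ for every $f$-exceptional curve $A$, so $C$ is disjoint from $\mathcal F$.

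It remains to rule out $p_a(C)\ge 1$ when $d\ge K_S^2$. Here I would first note that $C$ moves: $S'$ is a smooth rational surface, so Riemann--Roch gives $h^0(\mathcal O_{S'}(C))\ge \chi(\mathcal O_{S'}(C))=p_a(C)+C\cdot(-K_{S'})\ge 1+d\ge 2$, using $h^2(\mathcal O_{S'}(C))=h^0(K_{S'}-C)=0$ (as $C$ is nonzero effective and $K_{S'}$ is not effective). Since $C$ is irreducible, $|C|$ has no fixed part, and since $C^2\ge d\ge 1>0$ it is not composed with a pencil, so a general member of $|C|$ is again an irreducible curve with the same $(-f^*K_S)$-degree $d$ and arithmetic genus $p_a(C)$. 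One then excludes $p_a(C)\ge 1$ using the geometry of log del Pezzo surfaces of rank one: the presence of a singular point on $S$ must produce an irreducible curve of $(-f^*K_S)$-degree strictly smaller than $d$ — in the borderline situation this amounts to ruling out an anticanonical elliptic curve disjoint from $\mathcal F$ — contradicting the minimality of $d$. When $S$ has only cyclic singularities this last contradiction is immediate from Proposition \ref{int}: writing $C$ in the form \eqref{E}, the coefficient $m$ is fixed by $d$, and substituting the inequalities $C\cdot A_{j,p}\ge 0$ into Proposition \ref{int}(1) and (2) forces $p_a(C)=1+\frac{1}{2}(C^2+C\cdot K_{S'})\le 0$. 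In the non-cyclic case this step is the real obstacle: everything up to it is forced by nefness, bigness and the Hodge index theorem, but eliminating the remaining configurations genuinely requires the finer analysis of \cite{Zhang} (e.g.\ Lemma \ref{dp3}) and \cite{Belousov} rather than lattice theory alone.
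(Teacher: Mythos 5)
This lemma is quoted from \cite{Zhang}; the paper gives no proof of its own, so your attempt can only be judged on its internal correctness, and it has a genuine gap. Your Hodge-index computation is fine as far as it goes: it yields $2p_a(C)-2\le d^2/K_S^2-d$, which forces $p_a(C)=0$ precisely when $d<K_S^2$. But nothing in your argument shows that the minimal degree $d$ satisfies $d<K_S^2$ (equivalently, that the leading coefficient $m$ of $C$ in the form \eqref{C} satisfies $m<\sqrt{D'}$), and the entire regime $d\ge K_S^2$ is left to an unproved assertion, namely that ``the presence of a singular point on $S$ must produce an irreducible curve of $(-f^*K_S)$-degree strictly smaller than $d$.'' That assertion is exactly the content one would have to prove, and it is not a formal consequence of nefness, bigness, or the moving argument you give; producing a lower-degree curve from a singular point is the substance of Zhang's analysis. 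Moreover, your claim that in the cyclic case Proposition \ref{int} settles this ``immediately'' is incorrect: substituting $CA_{j,p}\ge 0$ into Proposition \ref{int}(1) and (2) gives
\[
2p_a(C)-2 \;=\; C^2+CK_{S'}\;\le\; \frac{m^2}{D'}K_S^2-\frac{m}{\sqrt{D'}}K_S^2\;=\;d\Bigl(\frac{d}{K_S^2}-1\Bigr),
\]
which is the \emph{same} bound as the Hodge index theorem and yields nothing new when $d\ge K_S^2$. So the lattice-theoretic route does not close the gap even for cyclic singularities.

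Two smaller points. First, ``note $d\ge 1$'' is unjustified: $f^*K_S$ is only a $\mathbb{Q}$-divisor, and $d=\frac{m}{\sqrt{D'}}K_S^2$ can be well below $1$ (e.g.\ when $K_S^2$ is small and $|\det R|$ is large). The conclusions you need ($CK_{S'}\le -1$, $C^2\ge 1$ when $p_a\ge 1$, $h^0(\mathcal{O}_{S'}(C))\ge 2$) survive because $CK_{S'}$ and $C(-K_{S'})=d+C\mathcal{D}$ are integers, but you should argue via integrality rather than via $d\ge 1$. Second, in the borderline case $d=K_S^2$, $p_a=1$, the conclusion that $C$ is disjoint from $\mathcal{F}$ should be drawn from $C\equiv\lambda f^*K_S$ (hence $CA=0$ for every exceptional $A$, and $CA\ge 0$ with $C\not\subset\mathcal{F}$), not from $C\cdot\mathcal{D}=0$ alone, since $\mathcal{D}_p=0$ at Du Val points. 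The reduction ``$p_a(C)=0$ implies $C$ smooth rational'' and the Riemann--Roch step are correct, but as it stands the proof is incomplete where it matters most.
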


\begin{lemma}[\cite{Zhang}, Lemma 2.1, \cite{GZ}, Remark 3.4]\label{dp5} Let $C$ be a minimal curve.
Suppose that $|C+\mathcal{F}+K_{S'}| \neq \emptyset$. Then there
is a unique decomposition $\mathcal{F} = \mathcal{F}' +
\mathcal{F}''$ such that
    \begin{enumerate}
    \item $\mathcal{F}'$ consists of $(-2)$-curves not meeting $C+\mathcal{F}''$,
    \item $C+\mathcal{F}''+K_{S'} \sim 0$,
    \item $\mathcal{F}''=f^{-1}(p)$ for some singular point $p$ unless $\mathcal{F}''=0.$
    \end{enumerate}
    Furthermore, if $\mathcal{F}''\neq 0$, then
$C\mathcal{F}'' = C\mathcal{F} = 2$ and one of the
following holds:
    \begin{enumerate}
    \item $\mathcal{F}''$ consists of one irreducible component, which $C$ meets in a single point with multiplicity 2 or in two points,
    \item $\mathcal{F}''$ consists of two irreducible components, whose intersection point $C$ passes through,
    \item $\mathcal{F}''$ consists of at least two irreducible components, and $C$ meets the two end components of $\mathcal{F}''$.
    \end{enumerate}
\end{lemma}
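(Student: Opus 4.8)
The plan is to study the unique effective divisor in $|C+\mathcal{F}+K_{S'}|$; call it $D$.

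\emph{Step 1: $D$ is supported on $\mathcal{F}$.} Since $\mathcal{F}$ is $f$-exceptional and $K_{S'}=f^{*}K_S-\sum_p\mathcal{D}_p$ with $\sum_p\mathcal{D}_p$ also $f$-exceptional, one has $D\cdot f^{*}(-K_S)=C\cdot f^{*}(-K_S)+K_{S'}\cdot f^{*}(-K_S)=C\cdot f^{*}(-K_S)-K_S^2$, which is strictly less than $C\cdot f^{*}(-K_S)$ because $K_S^2=(-K_S)^2>0$. On the other hand $f^{*}(-K_S)$ is nef, so $D\cdot f^{*}(-K_S)\ge 0$; and by minimality of $C$ every irreducible curve $\Gamma$ not contracted by $f$ satisfies $\Gamma\cdot f^{*}(-K_S)\ge C\cdot f^{*}(-K_S)$, while $C$ itself is not $f$-exceptional. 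Hence $D$ has no non-exceptional component, i.e.\ $D$ is supported on $\mathcal{F}$. It follows that $D\cdot f^{*}(-K_S)=0$, so in fact $C\cdot f^{*}(-K_S)=K_S^2$; and since the components of $\mathcal{F}$ span a negative definite sublattice of $\mathrm{Pic}(S')$, the effective divisor $D$ in its numerical class is unique.

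\emph{Step 2 (the heart of the matter): $D\le\mathcal{F}$.} Write $D=\sum_i d_iA_i$, let $m=\max_i d_i$ be attained at $A_{j_0}$, and suppose $m\ge 2$. Intersecting $C+\mathcal{F}+K_{S'}\sim D$ with $A_{j_0}$, using $K_{S'}\cdot A_{j_0}=-A_{j_0}^2-2$ (as $A_{j_0}\cong\mathbb{P}^1$), so that $\mathcal{F}\cdot A_{j_0}+K_{S'}\cdot A_{j_0}=\nu(A_{j_0})-2$ where $\nu(A_{j_0})$ is the number of components of $\mathcal{F}$ adjacent to $A_{j_0}$, and bounding $\sum_{k\ \mathrm{adj}} d_k\le \nu(A_{j_0})m$ and $C\cdot A_{j_0}\ge 0$, one obtains $m\bigl(\nu(A_{j_0})-n_{j_0}\bigr)\ge\nu(A_{j_0})-2$. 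When the connected component of $\mathcal{F}$ containing $A_{j_0}$ is a chain (the relevant situation, $S$ having cyclic singularities), $\nu(A_{j_0})\in\{1,2\}$: if $A_{j_0}$ is an end component the inequality reads $m(n_{j_0}-1)\le 1$, impossible for $m\ge 2$; if $A_{j_0}$ is interior it forces $n_{j_0}=2$, $C\cdot A_{j_0}=0$ and $d_k=m$ on both neighbours, so the value $m$ propagates along the whole chain down to an end component, where the same identity gives $C\cdot A_{\mathrm{end}}=1-m(n_{\mathrm{end}}-1)<0$, a contradiction. Thus $D$ is reduced. (When $S$ has non-cyclic quotient singularities one invokes \cite{Zhang} and \cite{GZ} instead; in the cyclic case the whole statement can alternatively be deduced from the inequality of Proposition \ref{int}(1).)

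\emph{Step 3: conclusion.} Set $\mathcal{F}'':=\mathcal{F}-D\ge 0$ and $\mathcal{F}':=D$; then $C+\mathcal{F}''+K_{S'}=(C+\mathcal{F}+K_{S'})-D\sim 0$, and on the rational surface $S'$ this is a linear equivalence. For $A_j\subset\mathcal{F}'$, intersecting $C+\mathcal{F}''+K_{S'}\sim 0$ with $A_j$ gives $C\cdot A_j+\mathcal{F}''\cdot A_j+K_{S'}\cdot A_j=0$ with all three summands $\ge 0$ (the last because $A_j^2\le -2$ on a minimal resolution), so each vanishes: $\mathcal{F}'$ is a disjoint union of $(-2)$-curves meeting neither $C$ nor $\mathcal{F}''$. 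Since $\mathcal{F}'$ and $\mathcal{F}''$ have disjoint supports and each $f^{-1}(p)$ is connected, $\mathcal{F}''$ is a union of whole fibres $f^{-1}(p)$; computing $p_a(\mathcal{F}'')=1+\tfrac12(\mathcal{F}''^{2}+\mathcal{F}''\cdot K_{S'})$ and substituting $\mathcal{F}''\sim-(C+K_{S'})$ together with $C^2+C\cdot K_{S'}=-2$ (Lemma \ref{dp4}) yields $p_a(\mathcal{F}'')=0$, which for a disjoint union of $k$ trees of rational curves forces $k=1$ whenever $\mathcal{F}''\neq 0$; hence $\mathcal{F}''=f^{-1}(p)$ for a single $p$. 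Uniqueness of the decomposition is immediate, as any admissible $\mathcal{F}''$ is a reduced divisor supported on the negative definite $\mathcal{F}$ with $\mathcal{F}''\sim-(C+K_{S'})$. Finally, if $\mathcal{F}''\neq 0$, intersecting $C+\mathcal{F}''+K_{S'}\sim 0$ with $C$ gives $C\cdot\mathcal{F}''=2$, hence $C\cdot\mathcal{F}=C\cdot\mathcal{F}''=2$; intersecting with a component $A_k$ of $\mathcal{F}''=f^{-1}(p)$ gives $C\cdot A_k=2-\nu(A_k)$, which is $\ge 0$ only when $f^{-1}(p)$ is a chain (so $p$ is cyclic), in which case $C\cdot A_k=1$ on the two end components and $0$ on the interior ones; the three possibilities in the statement are precisely the ways a smooth rational curve can meet such a chain, hitting only its two end components with total intersection $2$.

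The step I expect to be the main obstacle is Step 2, the reducedness of $D=C+\mathcal{F}+K_{S'}$. The propagation argument is elementary for chains, but for general quotient-singularity configurations (trees with a branch point) it genuinely needs the finer analysis of Zhang and Gurjar--Zhang, or, in the cyclic setting of this paper, the explicit intersection formulae of Proposition \ref{int}.
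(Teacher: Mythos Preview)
The paper does not give its own proof of this lemma: it is quoted verbatim from \cite{Zhang} (Lemma~2.1) and \cite{GZ} (Remark~3.4) with no argument supplied. So there is no ``paper's proof'' to compare against; your attempt is a self-contained proof where the paper simply cites.

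That said, your argument is sound, and in the cyclic setting it is a clean replacement for the cited references. Step~1 is correct: the key point is $D\cdot f^*(-K_S)=C\cdot f^*(-K_S)-K_S^2<C\cdot f^*(-K_S)$, which by minimality of $C$ forces every component of $D$ to be $f$-exceptional. Step~2 (reducedness of $D$) is the crux, and your propagation argument along chains is correct; you are right that for non-cyclic quotient singularities one would need to handle the trivalent vertex, though the same style of argument in fact goes through there with only a little more bookkeeping. In Step~3 your identifications $\mathcal{F}'=D$, $\mathcal{F}''=\mathcal{F}-D$ give exactly the required properties; the arithmetic-genus computation $p_a(\mathcal{F}'')=1+\tfrac12(C^2+CK_{S'})=0$ is a nice way to see connectedness of $\mathcal{F}''$, and the identity $C\cdot A_k=2-\nu(A_k)$ simultaneously forces $p$ to be cyclic and pins down the three geometric configurations in the ``Furthermore'' clause.

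One small point worth making explicit: in Step~1 you use that $S'$ is rational (so numerical and linear equivalence agree and $D$ is literally unique in its linear system); this is true for any log del Pezzo surface, but deserves a word since the lemma is stated before rationality is otherwise invoked.
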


\begin{lemma}[\cite{GZ}, Proposition 3.6]\label{dp7}
Let $C$ be a minimal curve. Suppose that $|C+\mathcal{F}+K_{S'}| =
\emptyset$. Then $C$ is a $(-1)$-curve, or $S'=S\cong\mathbb{P}^2$ and $C$ is a line,
or $S$ is a Hirzebruch surface with the negative section
contracted and $C$ is a fibre on the Hirzebruch surface.
\end{lemma}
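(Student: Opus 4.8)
The plan is to run through the three possible values of $C^{2}$ and show each forces one of the stated alternatives; the recurring leverage is that $d:=C\cdot(-f^{*}K_{S})$ is, by definition of a minimal curve, the smallest positive value of $(-f^{*}K_{S})$ on curves of $S'$, so that $B\cdot(-f^{*}K_{S})\ge d$ for every irreducible curve $B\not\subset\mathcal{F}$ (equivalently, every $B$ with $B\cdot(-f^{*}K_{S})>0$). First, $C$ is not a component of $\mathcal{F}$ (those have $(-f^{*}K_{S})$-degree $0$), so by Lemma \ref{dp4} it is a smooth rational curve and by Lemma \ref{-k} we have $C^{2}\ge -1$. If $C^{2}=-1$ then $C$ is a $(-1)$-curve and we are in case (i). So assume $C^{2}\ge 0$; then $C$ is nef, and from $0\to\mathcal{O}_{S'}\to\mathcal{O}_{S'}(C)\to\mathcal{O}_{C}(C)\to 0$ together with $H^{1}(\mathcal{O}_{S'})=0$ one gets $h^{0}(\mathcal{O}_{S'}(C))=C^{2}+2$.

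Suppose $C^{2}=0$. Then $|C|$ is a base-point-free pencil (two general members are disjoint), defining a fibration $\pi:S'\to\mathbb{P}^{1}$ whose general fibre is a smooth rational curve linearly equivalent to $C$. I would show $\pi$ is a $\mathbb{P}^{1}$-bundle. Any fibre $F$ has $F\cdot(-f^{*}K_{S})=d$; writing $F$ as the sum of its components and applying minimality of $d$ to the non-exceptional ones forces $F=B_{1}+G$, where $B_{1}$ occurs with multiplicity one, $B_{1}\cdot(-f^{*}K_{S})=d$, and $G$ is an effective divisor supported on $\mathcal{F}$. Since $B_{1}\cdot F=0$ we get $B_{1}^{2}=-B_{1}\cdot G\in\{-1,0\}$; connectedness of $F$ rules out $B_{1}^{2}=0$ unless $G=0$, and if $G\ne 0$ then $B_{1}$ is a $(-1)$-curve, $G$ lies on a single exceptional fibre $\mathcal{F}_{p_{0}}$ (the $\mathcal{F}_{p}$ being pairwise disjoint), and $F^{2}=0$ gives $G^{2}=-1$. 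But then $p_{a}(G)=1+\frac{1}{2}\bigl(G^{2}+G\cdot K_{S'}\bigr)\ge 1$, since $G\cdot K_{S'}=\sum m_{j}(-A_{j}^{2}-2)\ge 0$ and $p_{a}(G)$ is an integer; this contradicts Artin's criterion characterising rational surface singularities (quotient singularities are rational), which gives $p_{a}\le 0$ for every effective cycle supported on the exceptional locus. Hence each fibre is irreducible and reduced, $\pi$ is a $\mathbb{P}^{1}$-bundle, and $S'\cong\mathbb{F}_{n}$. Since $b_{2}(S)=1$, $\mathcal{F}$ consists of a single curve, necessarily the negative section $\Sigma$ with $\Sigma^{2}=-n\le -2$; thus $S$ is $\mathbb{F}_{n}$ with its negative section contracted and $C$ is a fibre, giving case (iii).

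Suppose $C^{2}\ge 1$. Then $C$ is nef and big. I would first check $|C|$ is base-point-free: otherwise, on a minimal resolution $\sigma$ of $\mathrm{Bs}\,|C|$ the moving part $M$ satisfies $M^{2}=C^{2}+F^{2}<C^{2}$ with $F$ the negative fixed part, while $\dim|M|=\dim|C|=C^{2}+1$, so $\varphi_{|M|}$ maps onto a non-degenerate surface of degree $\le M^{2}<C^{2}$ in $\mathbb{P}^{C^{2}+1}$, impossible since such a surface has degree $\ge C^{2}$ (the degenerate possibility, with image a curve and $M$ a proper multiple of a pencil class, is excluded because $C$ would then be a proper multiple of a class of fractional self-intersection). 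Hence $\varphi_{|C|}:S'\to\mathbb{P}^{C^{2}+1}$ is a morphism whose image is a non-degenerate surface of degree $C^{2}/\deg\varphi_{|C|}\ge C^{2}$, so $\varphi_{|C|}$ is birational onto a surface of minimal degree $C^{2}=(C^{2}+1)-1$. If that surface is $\mathbb{P}^{2}$ (forcing $C^{2}=1$), then $\varphi_{|C|}:S'\to\mathbb{P}^{2}$ is a birational morphism; if nontrivial, the strict transform $\widetilde L$ of a general line through the centre of its first blow-up is an effective curve with $\widetilde L^{2}=0$ and $\widetilde L\cdot(-f^{*}K_{S})=d-\mathcal{E}\cdot(-f^{*}K_{S})\le 0$ (where $\mathcal{E}$ is the total transform of that exceptional curve; since $\mathcal{E}\cdot K_{S'}=-1$ we have $\mathcal{E}\not\subset\mathcal{F}$, so $\mathcal{E}\cdot(-f^{*}K_{S})\ge d$), whence $\widetilde L$ is supported on the negative-definite $\mathcal{F}$, a contradiction. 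So $S'=S\cong\mathbb{P}^{2}$ and, by minimality of $d$, $C$ is a line: case (ii). Finally, if the minimal-degree image is a rational normal scroll, a quadric, a cone, or the Veronese surface $v_{2}(\mathbb{P}^{2})$ (all of which force $C^{2}\ge 2$), then factoring $\varphi_{|C|}$ through the corresponding ruled (resp. projective-plane) model and pulling back a ruling line (resp. a line) produces an effective class $R'$ with $R'^{2}=0$, $R'\cdot(-f^{*}K_{S})>0$, and $C-R'$ effective with $(C-R')^{2}\ge 0$; minimality gives $R'\cdot(-f^{*}K_{S})\ge d$, hence $(C-R')\cdot(-f^{*}K_{S})\le 0$, so $C-R'$ is supported on $\mathcal{F}$, again contradicting negative-definiteness (in the Veronese case one argues more directly that the map is $\psi^{*}$ of $\mathcal{O}(2)$, so $\psi^{*}H$ is an irreducible curve of $(-f^{*}K_{S})$-degree $d/2<d$). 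Thus $C^{2}\ge 2$ is impossible, and the three cases exhaust the statement.

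The step I expect to be the real obstacle is the relatively-minimal claim in the case $C^{2}=0$: reducing an arbitrary degenerate fibre to precisely the configuration $B_{1}+G$ with $G$ supported on a single exceptional fibre and $G^{2}=-1$, and then closing with Artin's rationality criterion. The surrounding bookkeeping — fibre multiplicities, the treatment of infinitely near blow-up centres in the $C^{2}=1$ sub-case, and the small non-integrality arguments excluding divisible linear systems — is routine but must be carried out with care.
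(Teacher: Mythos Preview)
The paper does not supply its own proof of this lemma; it is quoted as \cite{GZ}, Proposition~3.6, and no argument is given. So there is no paper-side proof to compare against, and your write-up is a from-scratch attempt.

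Your argument is essentially correct and in fact establishes slightly more than stated: you never invoke the hypothesis $|C+\mathcal{F}+K_{S'}|=\emptyset$, and indeed the trichotomy on $C^{2}$ holds for \emph{any} minimal curve on $S'$. The hypothesis appears in the lemma only to set up the dichotomy with Lemma~\ref{dp5}, not because it is needed for the conclusion. The $C^{2}=0$ case via Artin's rationality criterion is the heart of the matter and is handled cleanly: from $G^{2}=-1$, $G\cdot K_{S'}\ge 0$, and the parity $G^{2}+G\cdot K_{S'}\equiv 0\pmod 2$ you get $p_{a}(G)\ge 1$, contradicting rationality of quotient singularities.

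Two places can be tightened. First, base-point-freeness of $|C|$ when $C^{2}\ge 0$ follows immediately from the exact sequence you already wrote: the restriction $H^{0}(\mathcal{O}_{S'}(C))\to H^{0}(\mathcal{O}_{C}(C))$ is surjective since $h^{1}(\mathcal{O}_{S'})=0$, and $|\mathcal{O}_{\mathbb{P}^{1}}(C^{2})|$ has no base points; this also shows at once that the image of $\varphi_{|C|}$ cannot be a curve when $C^{2}\ge 1$ (since the irreducible member $C$ would then be a fibre with $C^{2}=0$). The resolution-of-base-locus detour is unnecessary. Second, in the $C^{2}\ge 2$ case your phrase ``factoring $\varphi_{|C|}$ through the corresponding ruled model'' needs one sentence of justification: a birational morphism from the smooth surface $S'$ onto a normal surface always factors through the minimal resolution of the target, so $\varphi_{|C|}$ lifts to a morphism $S'\to\mathbb{F}_{a}$ (respectively $S'\to\mathbb{P}^{2}$ in the Veronese case), after which the pullback of a ruling (respectively a line) is well-defined and your minimality contradiction goes through exactly as written.
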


\begin{lemma}[\cite{Belousov}, Lemma 4.1]\label{dp8}
Suppose that $S'$ contains a minimal curve $C$ with $C^2 = -1$.
Suppose that $|C+\mathcal{F}+K_{S'}| = \emptyset$. Then
$C\mathcal{F}' \leq 1$ for any connected component $\mathcal{F}'$
of $\mathcal{F}$.
\end{lemma}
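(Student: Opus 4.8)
The plan is to argue by contradiction via a Riemann--Roch estimate: assuming some connected component $\mathcal{F}'$ of $\mathcal{F}$ satisfies $C\mathcal{F}'\ge 2$, I will produce an effective divisor in $|C+\mathcal{F}+K_{S'}|$, contradicting the hypothesis $|C+\mathcal{F}+K_{S'}|=\emptyset$.

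First I would record the two structural facts that drive the argument. By Lemma \ref{dp4} the minimal curve $C$ is a smooth rational curve, so $p_a(C)=0$; and since $C$ is a minimal curve, $C\cdot(-f^{*}K_{S})>0$, so $C$ is not contracted by $f$ and in particular is not a component of $\mathcal{F}$. On the other side, each connected component $\mathcal{F}'$ of the reduced exceptional divisor equals $f^{-1}(p)$ for a single $p\in \mathrm{Sing}(S)$, and the exceptional set of the minimal resolution of a quotient surface singularity is a tree of smooth rational curves (a string in the cyclic case, a $D$- or $E$-type tree otherwise; equivalently, quotient singularities are rational, so $R^{1}f_{*}\mathcal{O}_{S'}=0$). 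In either formulation this gives $p_a(\mathcal{F}')=0$.

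Now suppose for contradiction that $C\mathcal{F}'\ge 2$ for some connected component $\mathcal{F}'$. Since $C$ and $\mathcal{F}'$ have no common component, the arithmetic-genus formula for a sum of effective divisors gives $p_a(C+\mathcal{F}')=p_a(C)+p_a(\mathcal{F}')+C\mathcal{F}'-1=C\mathcal{F}'-1\ge 1$. As $S$ is a log del Pezzo surface, $S'$ is a smooth rational surface, so $\chi(\mathcal{O}_{S'})=1$. Set $D:=C+\mathcal{F}'+K_{S'}$. By Riemann--Roch on $S'$ and adjunction, $\chi(D)=1+\tfrac12 D(D-K_{S'})=1+\tfrac12(C+\mathcal{F}')(C+\mathcal{F}'+K_{S'})=p_a(C+\mathcal{F}')\ge 1$, while Serre duality gives $h^{2}(D)=h^{0}\big(-(C+\mathcal{F}')\big)=0$ because $C+\mathcal{F}'$ is a nonzero effective divisor. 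Hence $h^{0}(D)\ge \chi(D)\ge 1$, so $|C+\mathcal{F}'+K_{S'}|\neq\emptyset$; adding the effective divisor $\mathcal{F}-\mathcal{F}'$ yields $|C+\mathcal{F}+K_{S'}|\neq\emptyset$, contradicting the hypothesis. This forces $C\mathcal{F}'\le 1$ for every connected component $\mathcal{F}'$, as claimed.

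The displayed arithmetic is routine; the only points that genuinely need care are the two structural inputs—that $C$ is a smooth rational curve not contained in $\mathcal{F}$ (Lemma \ref{dp4} together with minimality), and that $p_a(\mathcal{F}')=0$ (rationality of quotient singularities, or the explicit dual graphs in Brieskorn's list). I do not expect any real obstacle here. I also note that the hypothesis $C^{2}=-1$ is not actually used in this argument: only the fact that $C$ is a minimal curve—hence smooth rational and not $f$-exceptional—enters, and by Lemma \ref{dp7} the condition $|C+\mathcal{F}+K_{S'}|=\emptyset$ essentially already forces $C$ to be a $(-1)$-curve in any case.
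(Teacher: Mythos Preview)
Your argument is correct. The paper does not supply its own proof of this lemma---it is quoted from \cite{Belousov}, Lemma~4.1---so there is nothing to compare against in the present text. Your Riemann--Roch approach is in fact essentially the standard one (and is the argument Belousov gives): the key identity $\chi(C+\mathcal{F}'+K_{S'})=p_a(C+\mathcal{F}')$ together with $h^2=0$ forces an effective member once $p_a(C+\mathcal{F}')\ge 1$, and the genus formula $p_a(C+\mathcal{F}')=C\mathcal{F}'-1$ is exactly where the bound $C\mathcal{F}'\le 1$ comes from. The two structural inputs you flag---$C\not\subset\mathcal{F}$ since $C\cdot f^{*}K_S\neq 0$, and $p_a(\mathcal{F}')=0$ because the exceptional set over a quotient singularity is a tree of smooth rational curves---are both sound, and $\chi(\mathcal{O}_{S'})=1$ holds since log del Pezzo surfaces are rational (equivalently, $H^i(S',\mathcal{O}_{S'})=0$ for $i>0$ follows from rationality of quotient singularities and Kawamata--Viehweg vanishing on $S$). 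Your closing remark that $C^2=-1$ is not actually needed is also correct.
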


\begin{lemma}[\cite{Zhang}, Lemma 4.4]\label{dp9}
Suppose that $S'$ contains a minimal curve $C$ with $C^2 = -1$.
Suppose that $|C+\mathcal{F}+K_{S'}| = \emptyset$, and that $C$
meets exactly two components $F_1, F_2$ of $\mathcal{F}$. Then
either $F^2_1 = -2$ or $F^2_2 = -2$.
\end{lemma}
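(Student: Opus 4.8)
The plan is to reduce, via the structural lemmas just proved, to three numerical possibilities for $(-F_1^2,-F_2^2)$ and then exclude each of them; throughout I work under the standing hypothesis that $S$ has only cyclic singularities, which is the case relevant to Theorem \ref{dpmain} (the general quotient case is Zhang's, proved along the same geometric lines). Since $|C+\mathcal{F}+K_{S'}|=\emptyset$, Lemma \ref{dp8} gives $C\mathcal{F}'\le 1$ for every connected component $\mathcal{F}'$ of $\mathcal{F}$; as $C$ meets exactly the two components $F_1,F_2$, these lie in two distinct connected components of $\mathcal{F}$, over singular points $p_1\ne p_2$, and $CF_1=CF_2=1$. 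Applying Lemma \ref{dp3} to the $(-1)$-curve $C$, the pair $(-F_1^2,-F_2^2)$ is either $(2,n)$ with $n\ge 2$, in which case one of $F_1,F_2$ is a $(-2)$-curve and there is nothing to prove, or one of $(3,3)$, $(3,4)$, $(3,5)$. So suppose, for contradiction, that $F_1^2,F_2^2\le -3$ and $(-F_1^2,-F_2^2)$ is one of these last three pairs.

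Now apply Proposition \ref{int}(1) and (3) to $E=C$, using $CK_{S'}=-1$, $C^2=-1$, $CF_i=1$, $CA=0$ for every other exceptional curve $A$, and $m<0$ (Lemma \ref{general}(5), since $-K_S$ is ample and $C$ is not $f$-exceptional). Part (1) gives $\bigl(1-\tfrac{u_{F_1}+v_{F_1}}{q_{p_1}}\bigr)+\bigl(1-\tfrac{u_{F_2}+v_{F_2}}{q_{p_2}}\bigr)=1+\tfrac{m}{\sqrt{D'}}K_S^2<1$, and part (3) gives $\tfrac{u_{F_1}v_{F_1}}{q_{p_1}}+\tfrac{u_{F_2}v_{F_2}}{q_{p_2}}=1+\tfrac{m^2}{D'}K_S^2>1$. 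If $F_1$ and $F_2$ are both end components of their strings, this second relation is already contradictory: reading the Hirzebruch--Jung fraction of $p_i$ with $F_i$ in first place, $\tfrac{u_{F_i}v_{F_i}}{q_{p_i}}=q_1/q_{p_i}$ is the reciprocal of $[-F_i^2,n_2,\dots,n_{l_i}]$, which exceeds $-F_i^2-1\ge 2$ by Lemma \ref{cf}, whence $\tfrac{u_{F_1}v_{F_1}}{q_{p_1}}+\tfrac{u_{F_2}v_{F_2}}{q_{p_2}}<\tfrac12+\tfrac12=1$, contrary to what was just derived.

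It remains to rule out the case where $F_1$, say, is an interior component of its string. Here I would blow down $C$, $\sigma\colon S'\to S_1$, observing that afterwards $\bar F_1^2=F_1^2+1\le -2$, $\bar F_2^2=F_2^2+1\le -2$, $\bar F_1\bar F_2=1$, and all other components of $\mathcal{F}$ and their incidences are unchanged. Then I would contract the negative definite configuration $\sigma_*\mathcal{F}\setminus\{\bar F_1\}$ — a disjoint union of Hirzebruch--Jung strings, namely the two substrings of $\mathcal{F}_{p_1}$ left after deleting $F_1$, the string $\mathcal{F}_{p_2}$ with $\bar F_2^2$ in place of $F_2^2$, and the unchanged strings over the remaining singular points — obtaining a normal projective surface $\bar S$ with $b_2(\bar S)=1$ and only cyclic quotient singularities, now $|Sing(S)|+1$ of them. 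Once one checks that $-K_{\bar S}$ is again ample, so that $\bar S$ is a rank-one log del Pezzo surface, Belousov's bound (Theorem \ref{dp-rational}) forces a contradiction as soon as $|Sing(S)|\ge 3$; for fewer singular points one iterates the construction, which strictly decreases ${\rm rank}(R)$, or argues that the image of $\bar F_1$ on $\bar S$ is an irreducible curve of self-intersection $\le -2$ not contracted by the minimal resolution of $\bar S$, contradicting Lemma \ref{-k}.

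The step I expect to be the real obstacle is this last one: organizing the finitely many positions $F_1$ and $F_2$ can occupy inside their strings, and verifying in each case that the surface obtained after blowing down $C$ and recontracting is again a $\mathbb{Q}$-homology projective plane with only quotient singularities and anti-ample (or at least nef) canonical class, so that Theorem \ref{dp-rational}, Lemma \ref{-k}, and Proposition \ref{int} remain available throughout the induction.
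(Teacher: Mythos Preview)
The paper does not prove this statement; Lemma \ref{dp9} is quoted from Zhang \cite{Zhang}, Lemma 4.4, with no argument supplied. So there is no in-paper proof to compare against, only the question of whether your argument stands on its own.

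Your reduction via Lemmas \ref{dp8} and \ref{dp3} to $(-F_1^2,-F_2^2)\in\{(3,3),(3,4),(3,5)\}$ is correct, and the case where both $F_i$ are end components of their strings is handled cleanly: reading each string with $F_i$ in first position gives $u_{F_i}v_{F_i}/q_{p_i}<1/(-F_i^2-1)\le \tfrac12$, so the sum is $<1$, contradicting the equality from Proposition \ref{int}(3).

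The interior case, however, has a gap that is not the bookkeeping you anticipate but a genuine failure of strategy. After blowing down $C$ and recontracting $\sigma_*\mathcal{F}\setminus\{\bar F_1\}$ via $g\colon S_1\to\bar S$, one computes
\[
K_{\bar S}\cdot g_*\bar F_1 \;=\; g^*K_{\bar S}\cdot\bar F_1 \;=\; K_{S_1}\cdot\bar F_1+\textstyle\sum_{\bar p}\mathcal{D}_{\bar p}\cdot\bar F_1 \;=\; (-F_1^2-3)+\textstyle\sum_{\bar p}\mathcal{D}_{\bar p}\cdot\bar F_1 \;\ge\; 0,
\]
since $-F_1^2\ge 3$ and each $\mathcal{D}_{\bar p}$ is effective with $\bar F_1$ not among its components. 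Because $g_*\bar F_1$ is an irreducible curve on the rank-one surface $\bar S$, it is a positive multiple of the ample generator, so this forces $K_{\bar S}$ nef. Hence $-K_{\bar S}$ is \emph{not} ample, and neither Theorem \ref{dp-rational} nor Lemma \ref{-k} is available on $\bar S$: your construction has left the log del Pezzo setting altogether. (As a side remark, even were $-K_{\bar S}$ ample, Belousov's bound would require $|Sing(S)|\ge 4$, not $\ge 3$, to force five singular points on $\bar S$.) Zhang's original argument instead stays on $S'$ and, in the spirit of Lemmas \ref{dp5} and \ref{dp10}, analyses linear systems built from $C$, the $F_i$, and $K_{S'}$, using the minimality of $C$ directly.
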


The following lemma was proved in (\cite{Zhang}, the proof of Lemma 5.3).

\begin{lemma} \label{dp9-1}
With the same assumption as in Lemma \ref{dp9}, assume further
that $F^2_1 = F^2_2 = -2$. If $F_1$ is not an end component, then
one of the following two cases holds:
    \begin{enumerate}
    \item There exists another minimal $(-1)$-curve $C'$ such
        that $|C'+\mathcal{F}+K_{S'}| \neq \emptyset$.
    \item $F_2 = f^{-1}(p_i)$ for some singular point $p_i$.
    \end{enumerate}
\end{lemma}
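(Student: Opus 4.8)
The plan is to reduce to a precise local picture around $C$ and then to contract $C$ together with $F_1$, placing $S'$ at the top of a $\mathbb{P}^1$-fibration whose reducible fibres will manufacture the minimal curve $C'$ sought in $(1)$. First, by Lemma \ref{dp8}, $C\mathcal{F}'\le 1$ for every connected component $\mathcal{F}'$ of $\mathcal{F}$; since $C$ meets both $F_1$ and $F_2$, these lie in distinct connected components, say $F_1\subset f^{-1}(p_1)$ and $F_2\subset f^{-1}(p_2)$ with $p_1\ne p_2$, and $CF_1=CF_2=1$. Write $\cdots-G-F_1-G'-\cdots$ for the chain over $p_1$ ($F_1$ is not an end, so it has two neighbours $G,G'$), and assume $(2)$ fails, i.e.\ $f^{-1}(p_2)$ has a component $G_2$ adjacent to $F_2$; the task is then to produce $C'$.

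Let $\mu\colon S'\to Y$ be the contraction of $C$ followed by the contraction of the image of $F_1$ (which is a $(-1)$-curve once $C$ is blown down); then $K_Y^2=K_{S'}^2+2$, and, writing $D_Y$ for the image on $Y$ of a curve $D\subset S'$, one computes $\mu^*(F_2)_Y\equiv F_1+F_2+2C$, so that $N:=(F_2)_Y$ is a smooth rational curve with $N^2=0$ and $NK_Y=-2$. Hence $|N|$ is a base-point-free pencil defining a $\mathbb{P}^1$-fibration $\phi\colon Y\to\mathbb{P}^1$ with $N$ a fibre, for which $G_Y,G'_Y,(G_2)_Y$ are $1$-sections while the parts of $f^{-1}(p_1)$ beyond $G,G'$, the part of $f^{-1}(p_2)$ beyond $G_2$, and all of $f^{-1}(p_3),f^{-1}(p_4)$ (if present) are vertical. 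Let $\ell=C\cdot(-f^*K_S)$, the minimal positive value of the intersection with $-f^*K_S$. Since $\mu_*f^*K_S\cdot\Phi=-2\ell$ for every fibre $\Phi$, the minimality of $\ell$ forces each reducible fibre to be a chain $T-(\text{a string of }(-2)\text{-curves of }\mathcal{F})-T'$ whose end components $T,T'$ are $(-1)$-curves avoiding the point $\mu(C)$; in particular each component of $\mathcal{F}$ lying in a reducible fibre is a $(-2)$-curve, and the strict transforms $\widetilde T,\widetilde T'$ on $S'$ are again minimal $(-1)$-curves.

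Since $f^{-1}(p_1)$ has at least $3$ and $f^{-1}(p_2)$ at least $2$ components, $b_2(Y)=b_2(S')-2\ge 4$, so $\phi$ genuinely has reducible fibres. The final step is to analyse how the three sections $G_Y,G'_Y,(G_2)_Y$ and the vertical $\mathcal{F}$-chains distribute among these fibres, and to select a reducible fibre one of whose $(-1)$-ends $\widetilde T$ meets, inside $\mathcal{F}$, only components of a single $f^{-1}(p_i)$; setting $C':=\widetilde T$, one then checks by Lemma \ref{dp5} that $|C'+\mathcal{F}+K_{S'}|\ne\emptyset$ — concretely by verifying $-K_{S'}\sim C'+f^{-1}(p_i)$, or $-K_{S'}\sim C'$, with the aid of $K_{S'}^2=9-L$ — which is conclusion $(1)$. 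The main obstacle is exactly this last analysis: one must show that if \emph{no} reducible fibre yields such a $C'$, then all three sections must lie on the $f^{-1}(p_1)$-side and $f^{-1}(p_2)$ can have no component beyond $F_2$, i.e.\ $f^{-1}(p_2)=F_2$, which is $(2)$. I expect this to need the restrictions of Lemma \ref{dp3} on which components a minimal $(-1)$-curve may meet, Lemma \ref{dp9} for the sub-case where the candidate again meets exactly two components of $\mathcal{F}$, and the bound $|Sing(S)|\le 4$ of Theorem \ref{dp-rational}.
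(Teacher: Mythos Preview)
The paper itself gives no argument here; it simply records that the lemma is extracted from Zhang \cite{Zhang}, proof of Lemma~5.3. Your plan---contract $C$ and then the image of $F_1$ to obtain a $0$-curve $(F_2)_Y$ and hence a $\mathbb{P}^1$-fibration on $Y$, and then read off a new minimal $(-1)$-curve from a reducible fibre---is precisely Zhang's method, and your computations $\mu^*(F_2)_Y\equiv F_1+F_2+2C$, $N^2=0$, $NK_Y=-2$, and the identification of $G_Y,G'_Y,(G_2)_Y$ as $1$-sections are all correct.

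That said, there are two genuine gaps beyond the one you flag. First, your structural claim that every reducible fibre is a chain $T-(\text{string of }(-2)\text{-curves})-T'$ is too strong. The minimality bound gives only $\sum_T m_T\le 2$ over the $(-1)$-components, which also allows a fibre of the shape $2T+A+B$ with a single minimal $(-1)$-curve $T$ of multiplicity~$2$ and two $(-2)$-curves $A,B\in\mathcal{F}$ at the ends; in that shape no section can meet $T$ (multiplicity forbids it), so $T$ meets $\mathcal{F}$ in exactly $A,B$. This case must be treated separately---it is in fact central to Zhang's argument. Second, you tacitly assume $F_2$ is an end component of $f^{-1}(p_2)$, producing only one section $(G_2)_Y$ from that side; if $F_2$ is a middle component there are \emph{four} sections coming from $\mathcal{F}$, which changes the bookkeeping of how sections hit the reducible fibres.

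Finally, your proposed criterion ``$C'$ meets $\mathcal{F}$ only in one $f^{-1}(p_i)$, hence $|C'+\mathcal{F}+K_{S'}|\ne\emptyset$'' is a necessary condition extracted from Lemma~\ref{dp5}, not an automatic sufficient one: you must actually exhibit the linear equivalence $C'+f^{-1}(p_i)+K_{S'}\sim 0$ (or $C'+K_{S'}\sim 0$), and this is where the fibre-by-fibre accounting with $K_{S'}^2=9-L$ and the precise configuration of sections is used. The remaining analysis is doable but delicate; I recommend working directly alongside Zhang's proof of Lemma~5.3 rather than rediscovering it.
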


\begin{lemma}\label{dp10}
Suppose that $S'$ contains a minimal curve $C$ with $C^2 = -1$.
Suppose that $|C+\mathcal{F}+K_{S'}| = \emptyset$, and that $C$
meets three components $F_1, F_2, F_3$ of $\mathcal{F}$ and
possibly more. Define
$$G:=2C+F_1 + F_2 +F_3+ K_{S'}.$$ Then either $G\sim 0$ or $G \sim
\Gamma$ for some $(-1)$-curve $\Gamma$ such that $C\Gamma =
F_i\Gamma = 0$ for $i = 1,2,3$. Furthermore, the following hold
true.
\begin{enumerate}
    \item  In the first case, there are $3$ singular points $p_1,
p_2, p_3$ such that $f^{-1}(p_i)=F_i$,
    and $C$ meets no component of $\mathcal{F}-(F_1+F_2+F_3)$.
    \item In the second
    case,
    \begin{enumerate}
  \item  $L = 2 - (F^2_1 + F^2_2 + F^2_3)$, where $L$ is the number of irreducible components of
  $\mathcal{F}$,
\item each curve in $\mathcal{F}-F_1-F_2-F_3$ is a $(-2)$- or a $(-3)$-curve
 and there are at most two $(-3)$-curves in
$\mathcal{F}-F_1-F_2-F_3$,
\item each connected component of $\mathcal{F}$ contains at most one $(-n)$-curve with $n \geq 3$.
\end{enumerate}
    \end{enumerate}
\end{lemma}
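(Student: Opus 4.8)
The plan is to study the linear system $|G|$ by blowing down $C$ and combining Riemann--Roch with the intersection constraints that $G$ is orthogonal to $C$ and to $F_1,F_2,F_3$.

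First some bookkeeping. By Lemma \ref{dp4} and $C^2=-1$, $C$ is a smooth rational $(-1)$-curve, and by Lemma \ref{dp8} $C$ meets each connected component of $\mathcal F$ in at most one point, transversally; hence $CF_i=1$ for $i=1,2,3$, $F_iF_j=0$ for $i\neq j$, and $F_1,F_2,F_3$ lie in three distinct connected components of $\mathcal F$. Using $F_iK_{S'}=-2-F_i^2$, $CK_{S'}=-1$ and $K_{S'}^2=9-L$ one computes directly
$$G\cdot C=G\cdot F_i=0\quad(i=1,2,3),\qquad G^2=G\cdot K_{S'}=1-\sum_{i=1}^3 F_i^2-L.$$

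Next, let $\sigma\colon S'\to S''$ be the contraction of $C$, put $\bar F_i:=\sigma_*F_i$ and $Z:=\bar F_1+\bar F_2+\bar F_3$. Since $\sigma^*\bar F_i=F_i+C$ and $K_{S'}=\sigma^*K_{S''}+C$, one gets $G=\sigma^*(K_{S''}+Z)$; moreover the $\bar F_i$ pass through $\sigma(C)$ with distinct tangents, so $Z$ is a connected Gorenstein curve with $p_a(Z)=1$. From $0\to\mathcal O_{S''}(K_{S''})\to\mathcal O_{S''}(K_{S''}+Z)\to\omega_Z\to 0$ and $H^i(\mathcal O_{S''}(K_{S''}))=0$ ($S''$ rational) we obtain $h^0(G)=h^0(K_{S''}+Z)=h^0(\omega_Z)=1$, so $G$ is linearly equivalent to a unique effective divisor $D\ge 0$. (Alternatively, Riemann--Roch on $S'$ gives $\chi(\mathcal O_{S'}(G))=1$, and $h^2(G)=h^0(-2C-F_1-F_2-F_3)=0$ already yields $h^0(G)\ge 1$.) If $D=0$, then $-K_{S'}\sim 2C+F_1+F_2+F_3$; intersecting with any component $A$ of $\mathcal F$ other than the $F_i$ gives $2+A^2=2(C\cdot A)+\sum_i(F_i\cdot A)$, and since $C\cdot A=0$ (Lemma \ref{dp8}) and $F_i\cdot A=1$ would force $A^2=-1$, we conclude $F_i\cdot A=0$ and $A^2=-2$. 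Hence each $F_i$ is its own connected component, i.e. $f^{-1}(p_i)=F_i$ for three distinct singular points, and $C$ meets no component of $\mathcal F-(F_1+F_2+F_3)$: this is case (1).

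Now suppose $D\neq 0$. Using the morphism $f$ and ampleness of $-K_S$, $D\cdot(-f^*K_S)=2\delta-K_S^2$ where $\delta:=C\cdot(-f^*K_S)>0$; if $D$ were $f$-exceptional this would force $2\delta=K_S^2$, hence $2C$ numerically $f$-exceptional, impossible because $f(C)\cdot(-K_S)>0$. Since $C$ is a minimal curve, every non-$f$-exceptional curve meets $-f^*K_S$ with degree $\ge\delta$, so (as $D\cdot(-f^*K_S)<2\delta$) the non-exceptional part of $D$ is a single reduced irreducible curve; feeding in $D\cdot C=D\cdot F_i=0$ and the uniqueness $h^0(D)=1$ one forces $D=\Gamma$ to be a single $(-1)$-curve with $\Gamma\cdot C=\Gamma\cdot F_i=0$. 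Then $\Gamma^2=-1$ together with $G^2=1-\sum F_i^2-L$ gives $L=2-(F_1^2+F_2^2+F_3^2)$, which is (2)(a). For (2)(b),(c): from $-K_{S'}\sim 2C+F_1+F_2+F_3-\Gamma$ one computes, for each component $A$ of $\mathcal F-(F_1+F_2+F_3)$, that $\Gamma\cdot A=-2-A^2+\#\{i: F_i\cdot A\neq 0\}$, so $\Gamma$ meets every $(-n)$-curve with $n\ge 3$ in $\mathcal F-(F_1+F_2+F_3)$; applying Lemma \ref{dp3} (and, where needed, Lemmas \ref{dp5}, \ref{dp8}, \ref{dp9}, \ref{dp9-1}) to the $(-1)$-curve $\Gamma$ then restricts the admissible self-intersections to $-2$ and $-3$, with at most two $(-3)$-curves and at most one $(-n\ge 3)$-curve in each connected component.

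I expect the main obstacle to be the last step in case (2): showing that $D$ is literally one $(-1)$-curve — that is, that it carries no extra $f$-exceptional $(-2)$-tail (the genuinely delicate possibilities being $D=\Gamma+A$ or $2\Gamma+A$ with $A$ a $(-2)$-curve, which solve the same numerical relations $D^2=DK_{S'}$) — and likewise ruling out an isolated $(-4)$- or $(-5)$-curve in $\mathcal F-(F_1+F_2+F_3)$. These cannot be excluded by numerics alone and require the finer structure theory of Zhang, Gurjar--Zhang and Belousov recorded in Lemmas \ref{dp5}, \ref{dp8}, \ref{dp9}, \ref{dp9-1}.
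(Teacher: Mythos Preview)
The paper's own proof is almost entirely by citation: the dichotomy $G\sim 0$ or $G\sim\Gamma$ is quoted as \cite{Zhang}, Lemma~2.3, and parts (2-b), (2-c) as \cite{GZ}, Lemma~6.6; only (1) and (2-a) are argued in the paper, by the same elementary intersection computations you give. Your proposal is therefore far more ambitious: you are attempting to reprove Zhang's and Gurjar--Zhang's lemmas from scratch. Your computation $h^0(G)=1$ via the blow-down $\sigma$ and the adjunction sequence for $Z=\bar F_1+\bar F_2+\bar F_3$ is correct and is the standard opening move, and your proofs of (1) and (2-a) are essentially the paper's. (One nitpick in (1): you cannot get $C\cdot A=0$ directly from Lemma~\ref{dp8} when $A$ lies in a fourth connected component of $\mathcal F$; but the equation $2+A^2=2(C\cdot A)+\sum_i F_i\cdot A$ itself forces $C\cdot A=0$ since $A^2\le -2$, so the conclusion stands.)

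The gaps you flag are real and are precisely the content of the cited results. First, your step ruling out $D$ purely $f$-exceptional is wrong as written: $2\delta=K_S^2$ does \emph{not} imply that ``$2C$ is numerically $f$-exceptional''---a single intersection relation with $f^*K_S$ does not place a class in $R\otimes\mathbb Q$. One can instead use $D\cdot C=D\cdot F_i=0$ to see that an $f$-exceptional $D$ would be supported away from $F_1,F_2,F_3$ and their neighbours, and then push further; but completing this, and then excluding an exceptional tail on $\Gamma$ (your $D=\Gamma+A$ worry), is exactly Zhang's case analysis. Second, for (2-b) your application of Lemma~\ref{dp3} to $\Gamma$ only shows that the components of $\mathcal F-F_1-F_2-F_3$ meeting $\Gamma$ obey the $(2,\dots,2,n)$ or $(2,\dots,2,3,m)$ pattern with $m\le 5$; it does not by itself exclude a single $(-4)$- or $(-5)$-curve there, which is what \cite{GZ}, Lemma~6.6 handles. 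So your self-assessment is accurate: the numerics take you to the doorstep, but the irreducibility of $D$ and the sharp bound in (2-b) genuinely require the finer structure theory in the cited papers---which is why the paper simply invokes them.
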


\begin{proof}
 The main assertion is exactly (\cite{Zhang}, Lemma 2.3).

(1) Let $F_i$ be an irreducible component of $f^{-1}(p_i)$.
Suppose that $f^{-1}(p_i)$ has at least $2$ irreducible
components.
    Then there is an irreducible component $I$ of $f^{-1}(p_i)$ such that $IF_i = 1$. By Lemma
    \ref{dp8}, $IC=0$, hence
        $$0=IG = I.(2C+F_1 + F_2 + F_3+ K_{S'}) = IF_i+IK_{S'} =1 - I^2 - 2.$$
        Thus $I^2 = -1$, a contradiction.\\ Suppose that $C$ meets
        a component $J$ of $\mathcal{F}-(F_1 + F_2 + F_3)$. Then
$$0=JG = J.(2C+F_1 + F_2 + F_3+ K_{S'}) = 2+JK_{S'},$$
        so $J^2 = 0$, a contradiction.

(2-a) We note that
        $$G^2 = (2C+F_1 + F_2 + F_3+ K_{S'})^2 = 1 -L - (F^2_1 + F^2_2 + F^2_3).$$
        Since $G^2 =\Gamma^2= -1$, we have $L = 2 - (F^2_1 + F^2_2 + F^2_3).$

        (2-b) and (2-c) are exactly (\cite{GZ}, Lemma 6.6).
\end{proof}

The following lemma was proved  in (\cite{Zhang}, the proof of
Lemma 5.2).

\begin{lemma}\label{dp12}
With the same assumption as in Lemma \ref{dp10}, assume further
that $2C+F_1 + F_2 +F_3+ K_{S'}\sim \Gamma$ for some $(-1)$-curve
$\Gamma$, and that at least two of $F_1, F_2, F_3$ are
$(-2)$-curves. Then one of the following two cases holds:
    \begin{enumerate}
    \item There exists another minimal $(-1)$-curve $C'$ such that $|C'+\mathcal{F}+K_{S'}| \neq \emptyset$.
    \item $S$ has a non-cyclic singularity.
    \end{enumerate}
\end{lemma}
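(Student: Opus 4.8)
The approach is geometric and follows Zhang's method: work on $S'$ and exploit the $(-1)$-curve $\Gamma\sim 2C+F_1+F_2+F_3+K_{S'}$ provided by Lemma~\ref{dp10}, together with the fine structure of $\mathcal F$ recorded there.

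First I would pin down the configuration. From $\Gamma C=0$, the relation $\Gamma\sim 2C+F_1+F_2+F_3+K_{S'}$ and adjunction ($K_{S'}C=-1$, since $C^2=-1$) one gets $CF_1+CF_2+CF_3=3$, hence $CF_1=CF_2=CF_3=1$. Since $|C+\mathcal F+K_{S'}|=\emptyset$, Lemma~\ref{dp8} forces $C\mathcal F'\le 1$ for every connected component $\mathcal F'$ of $\mathcal F$; therefore $F_1,F_2,F_3$ lie in three distinct connected components $\mathcal F_1,\mathcal F_2,\mathcal F_3$, the curve $C$ meets $\mathcal F_i$ only along $F_i$, and $C$ meets at most one further component. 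By Lemma~\ref{dp10}(2), $\Gamma$ is disjoint from $C,F_1,F_2,F_3$, one has $L=2-(F_1^2+F_2^2+F_3^2)$ (so after relabelling $F_1^2=F_2^2=-2$ we get $L\ge 8$), every component of $\mathcal F-F_1-F_2-F_3$ is a $(-2)$- or $(-3)$-curve, there are at most two $(-3)$-curves, and each connected component carries at most one curve of self-intersection $\le-3$. A direct computation of $\Gamma\cdot J$ for $J\subset\mathcal F$ then shows that $\Gamma$ can meet only the neighbours of the $F_i$ inside the $\mathcal F_i$ and the $(-3)$-curves of $\mathcal F$, each with an explicitly bounded multiplicity.

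Next, assume $S$ has only cyclic singularities, since otherwise conclusion~(2) already holds; then each $\mathcal F_i$ is a chain. I would then carry out a case analysis --- paralleling the two-component situation of Lemma~\ref{dp9-1} --- according to whether $F_1$ (resp.\ $F_2$) is an end component of its chain and to the self-intersections of its neighbours, the goal in each case being to exhibit a minimal $(-1)$-curve $C'\ne C$ with $|C'+\mathcal F+K_{S'}|\ne\emptyset$. The candidate $C'$, or an effective divisor in the relevant linear system, is assembled from $C$, $\Gamma$ and the sub-chains of $\mathcal F_1,\mathcal F_2$ running out from $F_1,F_2$; the verification reduces to explicit intersection-number and linear-equivalence computations using $\Gamma\sim 2C+F_1+F_2+F_3+K_{S'}$, the coefficients of the $\mathcal D_{p_i}$ from Lemma~\ref{Dp}, and adjunction. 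When the construction fails, the obstruction is precisely that a required contraction of $C$ followed by contractions of $(-1)$-curves in the images of the $F_i$ creates a branch point in some $\mathcal F_i$ --- i.e.\ a non-cyclic singularity --- which is conclusion~(2). Finiteness of the cases to be checked is ensured by $L=2-(F_1^2+F_2^2+F_3^2)$, Noether's formula $K_{S'}^2=9-L$, and the orbifold inequality of Theorem~\ref{bmy2}; a few leftover configurations are eliminated by the classification of $\mathbb Q$-homology projective planes with five quotient singularities in \cite{HK1}.

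The main obstacle is this case analysis: for each shape of the pair of chains near $F_1,F_2$ (end versus interior position, $(-2)$- versus $(-3)$-neighbours) one must write down the correct effective divisor, verify that after removing its fixed part it is genuinely of the form $C'+\mathcal F+K_{S'}$ with $C'$ an \emph{irreducible} minimal $(-1)$-curve distinct from $C$, and keep track of $C'\mathcal F$. The associated bookkeeping of fixed components and intersection numbers, together with the auxiliary fourth connected component of $\mathcal F$ when $|\mathrm{Sing}(S)|=4$ and the $(-3)$-curves that $\Gamma$ is forced to meet, is where essentially all of the work lies.
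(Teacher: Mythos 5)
Your setup is sound: from $\Gamma C=0$ and adjunction you correctly get $CF_1=CF_2=CF_3=1$, Lemma \ref{dp8} correctly places $F_1,F_2,F_3$ in three distinct connected components, and your computation of $\Gamma\cdot J$ for the remaining components of $\mathcal F$ (neighbours of the $F_i$ and the $(-3)$-curves) is the right way to constrain $\Gamma$. Be aware, though, that the paper does not prove this lemma at all: it is quoted verbatim from the proof of Lemma 5.2 in \cite{Zhang}, so the burden of a self-contained argument is exactly the case analysis you defer.

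And that is where the genuine gap lies. Everything after ``I would then carry out a case analysis'' is a plan, not a proof: in no case do you actually write down the candidate $C'$, and two essential verifications are never addressed. First, conclusion (1) requires $C'$ to be a \emph{minimal} curve, i.e.\ to compute the minimal positive value of $(-f^*K_S)$-degree; a divisor ``assembled from $C$, $\Gamma$ and sub-chains of $\mathcal F$'' has $(-f^*K_S)$-degree equal to a nonnegative combination of $C\cdot(-f^*K_S)=\mu$ and $K_{S'}\cdot(-f^*K_S)=-K_S^2$ (e.g.\ $\Gamma\cdot(-f^*K_S)=2\mu-K_S^2$), and there is no reason this equals $\mu$; establishing minimality of the new curve is a substantive step that your sketch omits entirely. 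Second, your mechanism for reaching conclusion (2) is internally inconsistent: you begin by assuming $S$ has only cyclic singularities (``since otherwise conclusion (2) already holds''), but then claim that when the construction fails ``the obstruction is \ldots a non-cyclic singularity \ldots which is conclusion (2).'' A branch point created on an auxiliary contraction of $S'$ is a non-cyclic singularity of that auxiliary surface, not of $S$; under your standing assumption that $S$ is cyclic, conclusion (2) is unavailable, so every case of your analysis must actually produce the minimal $C'$ of conclusion (1), and you have shown this in none of them. Until the cases are enumerated and in each one either a minimal $C'$ with $|C'+\mathcal F+K_{S'}|\neq\emptyset$ is exhibited or a non-cyclic singularity \emph{of $S$} is located, the dichotomy asserted by the lemma has not been established.
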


\section{Proof of Theorem \ref{dpmain}}

 Let $S$ be a $\mathbb{Q}$-homology
projective plane with cyclic quotient singularities such that

\begin{itemize}
\item $H_1(S^0, \mathbb{Z}) = 0$, \item
$-K_S$ is ample.
\end{itemize}

Assume that $S$ has exactly $4$ cyclic singularities $p_1$, $p_2$,
$p_3$, $p_4$. In Section 5, we have enumerated all possible
$4$-tuples of orders of local fundamental groups:

\bigskip
\begin{enumerate}
\item $(2,3,5,q)$, $q \geq 7$, $\gcd(q, 30) = 1$, \item
$(2,3,7,q)$, $11 \leq q \leq 41$, $\gcd(q, 42) = 1$, \item
$(2,3,11,13)$.
\end{enumerate}
\bigskip

For (2) and (3), we have seen that there are 24 different possible
types for $R$, the sublattice of $H^2(S', \mathbb{Z})$ generated
by all exceptional curves of the minimal resolution $f:S'\to S$,
as shown in Table \ref{finite0}.

For (1), the order 3 singularity is of type $\frac{1}{3}(1,1)$
(Lemma \ref{noA2}), so it remains to consider the following cases:

\bigskip
\begin{itemize}
\item $A_1 + \frac{1}{3}(1,1) + \frac{1}{5}(1,1)+ \frac{1}{q}(1,q_1)$, $q\geq
7$, $\gcd(q, 30) = 1$; \item $A_1 + \frac{1}{3}(1,1) +
\frac{1}{5}(1,2) + \frac{1}{q}(1,q_1)$, $q\geq 7$, $\gcd(q,30) =
1$;
\item $A_1 + \frac{1}{3}(1,1) + A_4  +
\frac{1}{q}(1,q_1)$, $q\geq 7$, $\gcd(q, 30) = 1$;
\item the 24 cases in Table \ref{finite0}.
\end{itemize}

\bigskip\noindent
Let
$$\mathcal{F}=f^{-1}(Sing(S))$$ be the reduced exceptional divisor of the minimal resolution $f:S' \rightarrow S$.

Let $C$ be a (fixed) minimal curve on $S'$.  Since $-K_S$ is
ample, by Lemma \ref{general}, $C$ can be written as
\begin{equation}\label{C}
C =  -mM + \underset{p \in Sing(S)}{\sum}
\overset{l_p}{\underset{i = 1}{\sum}} a_{i, p} A_{i,p}
\end{equation}
 for some integer $m > 0$ and some $a_{i,p}\in
 \frac{1}{c}\mathbb{Z}$.

\subsection{Step 1. $|C+\mathcal{F}+K_{S'}| =\emptyset$}
\begin{proof}
Suppose that $|C+\mathcal{F}+K_{S'}| \neq\emptyset$. By Lemma
\ref{dp5}, we see that $S$ has at least 3 rational double
points.

 In the case of
$(2,3,5,q)$, by Lemma \ref{noA2} we see that $S$ has 3 rational
double points, only if the singularities are of type
$A_1+[3]+A_4+A_{q-1}$. In this case, $$L=q+5\,\,\,{\rm and}\,\,\,
K_S^2=9-(q+5)+\frac{1}{3}<0,$$ a contradiction.

We also see that each of the 24 cases from Table 1 has at most 2 rational
double points.
\end{proof}

\subsection{Step 2.}
\begin{enumerate}
    \item $C$ is a $(-1)$-curve.
    \item $C\mathcal{F}=3$, and $C$ meets three distinct components $F_1, F_2, F_3$ of
$\mathcal{F}$.
\end{enumerate}

\begin{proof} (1) It immediately follows from Lemma \ref{dp7} since $S$ has $4$ singularities.

(2) By Lemma \ref{dp8}, $C\mathcal{F}\le 4$. Since $C^2=-1<0$ and
the lattice $R$ is negative definite, $C\mathcal{F} \geq 1$.

Assume that $C\mathcal{F} = 1$. Blowing up the intersection point,
then contracting the proper transform of $C$ and the proper
transforms of all irreducible components of $\mathcal{F}$, we
obtain a $\mathbb{Q}$-homology projective plane with $5$ quotient
singularities, which contradicts the result of \cite{HK1} since
$S$ is rational.

Assume that $C\mathcal{F} = 4$. By Lemma \ref{dp8}, $C$ meets four
components $F_1, F_2, F_3, F_4$ of $\mathcal{F}$, where
$F_i\subset f^{-1}(p_i)$. Then $G \sim \Gamma$ by Lemma \ref{dp10}
(1). By Lemma \ref{dp3}, at least two of $F_1, F_2, F_3, F_4$ have
self-intersection $-2$. Thus, by Lemma \ref{dp12}, there exists
another minimal $(-1)$-curve $C'$ such that
$|C'+\mathcal{F}+K_{S'}| \neq \emptyset$. This is impossible by
Step 1.

Assume that $C\mathcal{F} = 2$. \\
(a) Suppose that the case $(2,3,5,q)$ occurs for some $q \geq 7$
with $\gcd(q, 30) =1$.  By Lemma \ref{blow}, $C.f^{-1}(p_4)=2$.
But, By Lemma \ref{dp8}, $C.f^{-1}(p_4)\le 1$, a contradiction.  \\
(b) Now suppose that one of the $24$ cases of Table 1 occurs. By
Lemma \ref{dp8}, there are two components $F_1$ and $F_2$ of
$\mathcal{F}$ with $CF_1=CF_2=1$. By Lemma \ref{dp9}, we may
assume that $F^2_1 = -2$. Moreover, by Lemma \ref{blow2}, $C$ does
not meet an end component of $f^{-1}(p_i)$ for any $i$, i.e., both
$F_1$ and $F_2$ are middle components. Thus $F^2_2 \neq-2$ by
Lemma \ref{dp9-1} and Step 1. After contracting the $(-1)$-curve
$C$, by contracting the proper transforms of all irreducible
components of $\mathcal{F} - F_1$, we obtain a
$\mathbb{Q}$-homology projective plane with $5$ quotient
singularities, which contradicts the result of \cite{HK1} since
$S$ is rational.
\end{proof}

\subsection{Step 3.}
 $2C+F_1 + F_2 +F_3+ K_{S'} \sim \Gamma$ for
some $(-1)$-curve $\Gamma$.

\begin{proof}
Suppose that $$2C+F_1 + F_2 +F_3+ K_{S'} \sim 0.$$ Then, by Lemma
\ref{dp10}(1),
 each $F_i$ is equal to the inverse image of a singular point of $S$.
By Table 1 and Lemma \ref{noA2}, only the following cases satisfy
this condition:

\bigskip $$\begin{array}{ll} A_1+A_2+[7]+[13]\,\,& ({\rm Case}\,\, 1,\,\,{\rm
Table}\,\,1),\\ A_1 +[3]+[2,2,2,2]+[q],&\\
A_1 +[3]+[3,2]+[q], &\\ A_1
+[3]+[5]+\frac{1}{q}(1,q_1).\end{array}$$

\bigskip\noindent Thus,
 $$(-F_1^2, -F_2^2, -F_3^2)=(2,7, 13), (2,3,q),
(2,5,q), (3,5,q), (2,3,5).$$
 Then Lemma \ref{dp3} rules out the first four possibilities.

 In the last case $(-F_1^2, -F_2^2, -F_3^2)=(2,3,5)$, $F_i=f^{-1}(p_i)$ for
 $i=1,2,3$. In this case we consider the sublattice $$\langle C, F_1, F_2,
F_3\rangle\subset H^2(S', \mathbb{Z})$$ generated by $C, F_1, F_2,
F_3$. It is of rank 4 and has
\begin{displaymath}
\left( \begin{array}{cccc}
-1 & 1 & 1 & 1 \\
1 & -2 & 0 & 0 \\
1 & 0 & -3 & 0 \\
1 & 0& 0 & -5 \\
\end{array} \right)
\end{displaymath}
as its intersection matrix. It has determinant $-1$, hence the
orthogonal complement of $\langle C, F_1, F_2, F_3\rangle$ in
$H^2(S', \mathbb{Z})$ is unimodular. The orthogonal complement is
an over-lattice of the lattice $R_{p_4}$ generated by the
components of $f^{-1}(p_4)$. Since $R_{p_4}$ is a primitive
sublattice of $H^2(S', \mathbb{Z})$, it must be unimodular, hence
$q=1$, a contradiction.
\end{proof}

\subsection{Step 4.} If one of the cases $(2,3,5,q)$,  $q \geq 7$,
$\gcd(q, 30) = 1$, occurs, then $C.f^{-1}(p_4) = 1$.

\begin{proof} Suppose that the case $(2,3,5,q)$ occurs for some $q \geq 7$ with $\gcd(q, 30) =
1$. By Lemma \ref{noA2}, $p_2$ is of type $[3]$.

By Lemma \ref{dp8}, $C.f^{-1}(p_i) \le 1$ for $i=1,2,3,4$.\\
Suppose on the contrary that $C.f^{-1}(p_4) = 0$.\\ Then,
$$C.f^{-1}(p_1) = C.f^{-1}(p_2) = C.f^{-1}(p_3) =1.$$
 Let $F_i\subset f^{-1}(p_i)$ be the component with $CF_i=1$ for $i=1,2,3$.

  Assume that $p_3$ is of type
$[5]$. Then $(-F_1^2, -F_2^2, -F_3^2)=(2,3,5)$ and the sublattice
$\langle C, F_1, F_2, F_3\rangle\subset H^2(S', \mathbb{Z})$ has
determinant $-1$, leading to the same contradiction as above,
since the orthogonal complement of $\langle C, F_1, F_2,
F_3\rangle$ in $H^2(S', \mathbb{Z})$ is $R_{p_{4}}$.

Assume that $p_3$ is of type $[2, 3]$. Then $(-F_1^2, -F_2^2,
-F_3^2)=(2,3,2)$ or $(2,3,3)$. Let $f^{-1}(p_3) =F_3+ F_3'$. If
$F_3^2=-2$, then $$|\det\langle C, F_1, F_2, F_3,
F_3'\rangle|=13,$$ and by Lemma \ref{dp10}(2-a) $L = 2+2+3+2 = 9$,
so $l = 5$. The orthogonal complement of $\langle C, F_1, F_2,
F_3, F_3'\rangle$ in $H^2(S', \mathbb{Z})$ is $R_{p_{4}}$, hence
$$|\det (R_{p_{4}})|=q=13.$$ This leads to a contradiction since
there is no continued fraction of length $5$ with $q = 13$. If
$F_3^2=-3$, then $$|\det\langle C, F_1, F_2, F_3,
F_3'\rangle|=7,$$ hence $|\det (R_{p_{4}})|=q=7.$ By Lemma
\ref{dp10}(2), $L = 2+2+3+3 = 10$, so $l = 6$. Thus $p_4$ is of
type $A_6$. But, then $$K^2_S = 9-L
-\mathcal{D}_{p_2}^2-\mathcal{D}_{p_3}^2= -1 + \frac{1}{3} +
\frac{2}{5} < 0,$$ a contradiction.

Assume that $p_3$ is of type $A_4=[2,2,2,2]$. Then $(-F_1^2,
-F_2^2, -F_3^2)=(2,3,2)$. Let $f^{-1}(p_3)=H_1+H_2+H_3+H_4$. If
$F_3$ is an end component of $f^{-1}(p_3)$, say $H_1$,  then
$$|\det\langle C, F_1, F_2, H_1,H_2,H_3,H_4\rangle|= 19,$$ and by
Lemma \ref{dp10}(2-a) $L = 2+2+3+2 = 9$, so $l = 3$. Thus $|\det
(R_{p_{4}})|=q=19$ and rank$(R_{p_{4}})=3.$ Among all
Hirzebruch-Jung continued fractions of order 19, only two,
$[7,2,2]$ and $[3,4,2]$, have length 3. In each of these two
cases, $f^{-1}(p_4)$  contains an irreducible component with
self-intersection $\leq -4$. Since
$f^{-1}(p_4)\subset\mathcal{F}-F_1-F_2-F_3$, we have a
contradiction by Lemma \ref{dp10}(2-b). If $F_3$ is a middle
component of $f^{-1}(p_3)$, say $H_2$, then $$|\det\langle C, F_1,
F_2, H_1,H_2,H_3,H_4\rangle|= 31,$$  and by Lemma \ref{dp10}(2-a)
$L = 2+2+3+2 = 9$, so $l = 3$. Thus $q=31$ and $p_4$ is of type
$[11,2,2], [3,6,2],$ or $[5,2,4]$. In each of these three cases,
$f^{-1}(p_4)$ contains an irreducible component with
self-intersection $\leq -4$, a contradiction by Lemma
\ref{dp10}(2-b). This proves that $C.f^{-1}(p_4) = 1.$
\end{proof}

\subsection{Step 5.} None of the cases $(2,3,5,q)$,  $q \geq 7$, $\gcd(q,
30) = 1$, occurs.

\begin{proof} Suppose that the case $(2,3,5,q)$ occurs for some $q \geq 7$ with $\gcd(q, 30) =
1$.\\ By Lemma \ref{noA2}, $p_2$ is of type $[3]$.\\ By Step 2,
$C\mathcal{F}=3$ and $C$ meets the three components $F_1, F_2,
F_3$ of $\mathcal{F}$.\\ By Step 3,
$$2C+F_1 + F_2 +F_3+ K_{S'} \sim \Gamma$$ for some $(-1)$-curve
$\Gamma$.\\ By Step 4, we may assume that $F_3 \subset
f^{-1}(p_4).$

 Let $$f^{-1}(p_4) =
\underset{D_1}{\overset{-n_1}{\circ}}-\underset{D_2}{\overset{-n_2}{\circ}}-\ldots-
\underset{D_l}{\overset{-n_l}{\circ}}$$

\bigskip\noindent
 and $F_3=D_j$ for some $1\le j\le l$. Note first that by Lemma
\ref{dp10}(2-b), $n_k\le 3$ for all $k\neq j$.

\medskip
Assume that $p_3$ is of type $[5]$. By Lemma \ref{dp10}(2-b), $C$
must meet $f^{-1}(p_3)$, so we may assume that $F_2=f^{-1}(p_3)$.
Since $F_1=f^{-1}(p_1)$ or $F_1=f^{-1}(p_2)$, by Lemma \ref{dp3},
 $$(-F_1^2, -F_2^2,
-F_3^2)=(2,5,2), (3,5,2), (2,5,3).$$
 By Lemma \ref{dp10} (2-a), we have
 $$(L,n_j)=(11,2), (12,2), (12,3),$$ hence
$$(l,n_j)=(8,2), (9,2), (9,3).$$
By Lemma \ref{dp10} (2-b) and (2-c), $$\begin{array}{ll}
[n_1,\ldots,
n_l]=&[3,2,2,2,2,2,2,2], [2,2,2,2,2,2,2,2];\\
&[3,2,2,2,2,2,2,2,2], [2,2,2,2,2,2,2,2,2]
\end{array}$$ up to permutation of
$n_1,\ldots, n_l$.
%Applying Lemma \ref{dp11}(3),
Counting all possible permutations and identifying $[n_1,\ldots,
n_l]$ with its reverse $[n_l,\ldots, n_1]$, it is easy to see that
there are $$4+1+5+1=11$$ possible cases for $[n_1,\ldots,
n_l]$.\\ E.g., $[3,2,2,2,2,2,2,2]$ gives 4 possible cases for $[n_1,\ldots,
n_l]$. None of these 11 cases satisfies the following three
conditions:

\bigskip
\begin{itemize}
\item ($\#$1) $K^2_S > 0$,
\item ($\#$2) $\gcd(q, 30) = 1$,
\item ($\#$3) $D=|\det(R)|K^2_S$ is a positive square integer.
\end{itemize}

\bigskip Assume that $p_3$ is of type $[2,3]$. Then, by Lemma
\ref{dp3},
$$(-F_1^2, -F_2^2, -F_3^2)=(2,3,n_j), n_j\le 5,\,\,\, {\rm or}\,\,
(3,3,n_j), n_j=2,\,\,\, {\rm or}\,\,(2,2,n_j).$$ The last case can
be ruled out by Lemma \ref{dp12} and Step 1 since $S$ has only cyclic singularities. Now, by Lemma \ref{dp10}(2), we have
$$(l,n_j)=(5,2), (6,3), (7,4), (8,5), (6,2),$$ and  $$\begin{array}{lll} [n_1,\ldots,
n_l]&=&[3,2,2,2,2], [2,2,2,2,2]; [3,2,2,2,2,2];\\ &&
[4,2,2,2,2,2,2];
 [5,2,2,2,2,2,2,2]; [2,2,2,2,2,2],\end{array}$$ up to permutation of $n_1,\ldots,
n_l$. It is easy to see that there
are $16$ possible cases for $[n_1,\ldots, n_l]$. None of
them satisfies the three conditions $(\#1),(\#2),(\#3)$.

\medskip
Assume that $p_3$ is of type $[2,2,2,2]$. Then, by Lemma
\ref{dp3}, $$(-F_1^2, -F_2^2, -F_3^2)=(2,3,n_j), n_j\le 5,
\,\,\,{\rm or}\,\,\,(2,2,n_j).$$ The last case can be ruled out by
Lemma \ref{dp12} and Step 1 since $S$ has only cyclic
singularities. Now, by Lemma \ref{dp10}(2), we have
$$(l,n_j)=(3,2), (4,3), (5,4), (6,5),$$ and
$$\begin{array}{lll} [n_1,\ldots, n_l]&=&[3,2,2],
[2,2,2]; [3,2,2,2]; [4,2,2,2,2]; [5,2,2,2,2,2],
\end{array}$$ up to permutation of $n_1,\ldots, n_l$. It is easy to see that there are
$11$ possible cases for $[n_1,\ldots, n_l]$.  None of them
satisfies the three conditions $(\#1),(\#2),(\#3)$.
\end{proof}

\medskip
Next, we will show that none of the cases $(2,3,7,q)$, $11 \leq q
\leq 41$, $\gcd(q, 42) = 1$, and $(2,3,11,13)$  occurs. To do
this, it is enough to consider the 24 cases of Table 1.

\subsection{Step 6.} None of the  $24$ cases of Table 1  occurs.

\begin{proof} By Step 2, $C\mathcal{F}=3$ in each of the 24 cases of Table 1.\\

Each of Cases (1), (2), (3), (4), (6), (8), (9), (11), (12), (13),
(17), and (19), contains an irreducible components $F'$ with
self-intersection $\le -6$. Lemma \ref{dp10} (2-b) implies that
$C$ meets $F'$. Thus $C$ meets two components of $\mathcal{F}$
with self-intersection $-2$ by Lemma \ref{dp3}. Thus we get a
contradiction for those cases by Lemma \ref{dp12} and Step 1.

By Lemma \ref{dp10} (2-c), we get a contradiction immediately for
Cases (7), (10), (14), (16), (18), since each of these cases
contains a connected component of $\mathcal{F}$ with at least two
irreducible components of self-intersection $\le -3$.

By Lemma \ref{dp3} and Lemma  \ref{dp10} (2-b), we get a
contradiction immediately for Cases (5), (20), (21), (22), since
each of these cases contains at least two irreducible components
with self-intersection $\le -4$.

We need to rule out the remaining three cases: (15), (23), (24).

\medskip
Consider Case (24). Note that $L = 10$ in this case. On the other
hand, by Lemma \ref{dp10}(2-b), $C$ must meet the component having
self-intersection number $-5$. Thus, we may assume that
$F_3^2=-5$. Since $F_1^2\le -2, F_2^2\le -2$, Lemma  \ref{dp10}
(2-a) gives $L=2 - (F^2_1 + F^2_2 + F^2_3) \geq 2 + 2+2+ 5 = 11$,
a contradiction.

\medskip
Case (15): Let
 $$\underset{A}{\overset{-2}{\circ}} \quad \underset{B}{\overset{-3}{\circ}} \quad \underset{C_1}{\overset{-3}{\circ}}-\underset{C_2}{\overset{-2}{\circ}}
 -\underset{C_3}{\overset{-2}{\circ}}\quad \underset{D_1}{\overset{-3}{\circ}}-\underset{D_2}{\overset{-2}{\circ}}-
 \underset{D_3}{\overset{-2}{\circ}}-\underset{D_4}{\overset{-2}{\circ}}-
 \underset{D_5}{\overset{-2}{\circ}}$$

 \bigskip\noindent
 be the exceptional curves.
In this case, $K^2_S = \frac{50}{231}, \sqrt{D} = 10$.\\ Since $L = 10=2 - (F^2_1 + F^2_2 + F^2_3)$, $C$ meets only two of $B, C_1, D_1$.\\
 If $CC_1 = CD_1 =1$, then $CA = 1$. Applying Proposition \ref{int}(1) to $C$ of the form \eqref{C} and
looking at Table \ref{dp-315}, we get
 $$\frac{m}{\sqrt{D}}K^2_S  = 1 - \frac{3}{7} - \frac{5}{11} = \frac{9}{77},$$
 thus $m = \frac{27}{5}$, not an integer, a contradiction.
 \begin{table}[h]
\caption{} \label{dp-315}
\renewcommand\arraystretch{1.5}
\noindent\[
\begin{array}{|c|c|c|c|c|c|c|c|c|c|c|}
\hline
 & [2] & [3] & \multicolumn{3}{|c|}{[3,2,2]} & \multicolumn{5}{|c|}{[3,2,2,2,2]}  \\   \hline
j&1&1&1&2&3 &1 &2&3&4&5 \\
 \hline
1-\frac{v_j + u_{j}}{q} & 0 &\frac{1}{3}& \frac{3}{7}&\frac{2}{7}&\frac{1}{7}&\frac{5}{11}&\frac{4}{11}  & \frac{3}{11}& \frac{2}{11}& \frac{1}{11} \\
  \hline
\end{array}
\]
\end{table}\\
If  $CB = CC_1=CA =1$, then $\Gamma$ meets $C_2$ and $D_1$ only, a
contradiction to Lemma \ref{blow2}.\\
If  $CB = CC_1=CD_j =1$ for some $j$, then Proposition
\ref{int}(1) gives
 $$\frac{m}{\sqrt{D}}K^2_S = 1 -\frac{1}{3} - \frac{3}{7} - \Big(1 - \frac{v_j + u_j}{q} \Big) > 0,$$
hence $ j = 4, 5.$ If $j = 4$, then
  $$\frac{m}{\sqrt{D}}K^2_S = 1 -\frac{1}{3} - \frac{3}{7} - \frac{2}{11}=\frac{13}{231},$$
 thus $m = \frac{13}{5}$, a contradiction. If $j = 5$, then
  $$\frac{m}{\sqrt{D}}K^2_S = 1 -\frac{1}{3} - \frac{3}{7} - \frac{1}{11} = \frac{34}{231},$$
 thus $m = \frac{34}{5}$, a contradiction.\\
If $CB = CD_1=CA = 1$, then
  $$\frac{m}{\sqrt{D}}K^2_S = 1 -\frac{1}{3}  - \frac{5}{11} = \frac{7}{33},$$
 thus $m = \frac{49}{5}$, a contradiction.\\ If $CB = CD_1=CC_2 = 1$, then
  $$\frac{m}{\sqrt{D}}K^2_S = 1 -\frac{1}{3}  -\frac{2}{7} - \frac{5}{11} = -\frac{17}{231} < 0,$$
  a contradiction.\\ If $CB = CD_1=CC_3 = 1$, then
  $$\frac{m}{\sqrt{D}}K^2_S = 1 -\frac{1}{3}  -\frac{1}{7} - \frac{5}{11} = \frac{16}{231},$$
 thus $m = \frac{16}{5}$,  a contradiction.

\medskip
 Case (23): Let
 $$\underset{A}{\overset{-2}{\circ}} \quad \underset{B}{\overset{-3}{\circ}} \quad \underset{C_1}{\overset{-3}{\circ}}-\underset{C_2}{\overset{-2}{\circ}}-
 \underset{C_3}{\overset{-2}{\circ}}-\underset{C_4}{\overset{-2}{\circ}}-
 \underset{C_5}{\overset{-2}{\circ}}\quad  \underset{D_1}{\overset{-4}{\circ}}-\underset{D_2}{\overset{-2}{\circ}}-
 \underset{D_3}{\overset{-2}{\circ}}-\underset{D_4}{\overset{-2}{\circ}}$$

 \bigskip\noindent
 be the exceptional curves.
 Since $C$ meets $D_1$ and $L = 11$, $C$ must meet only one of $B$ and
 $C_1$.\\
  If $CB=CA =1$, then $\Gamma$ meets exactly two irreducible components $C_1, D_2$ with multiplicity 1, a contradiction to Lemma
  \ref{blow2}.\\
 If $CB=CC_j = 1$ for some $j\ge 2$, then Table \ref{dp-323} gives
  $$\frac{m}{\sqrt{D}}K^2_S \le 1 - \frac{1}{3} -\frac{1}{11} - \frac{8}{13}   < 0,$$
  a contradiction.\\
  If $CC_1 = 1$, then $CA=1$ and Proposition \ref{int}(1) together with Table \ref{dp-323} gives
  $$\frac{m}{\sqrt{D}}K^2_S = 1 - 0 -\frac{5}{11} - \frac{8}{13}   < 0,$$
  a contradiction.
 \begin{table}[ht]
\caption{} \label{dp-323}
\renewcommand\arraystretch{1.5}
\noindent\[
\begin{array}{|c|c|c|c|c|c|c|c|c|c|c|c|}
\hline
 & [2] & [3] & \multicolumn{5}{|c|}{[3,2,2,2,2]} & \multicolumn{4}{|c|}{[4,2,2,2]}  \\   \hline
j&1&1&1&2&3& 4&5&1 &2&3&4 \\
 \hline
1-\frac{v_j + u_{j}}{q} & 0 &\frac{1}{3}& \frac{5}{11}&\frac{4}{11}&\frac{3}{11}&\frac{2}{11}&\frac{1}{11}  & \frac{8}{13}& \frac{6}{13}& \frac{4}{13}& \frac{2}{13}\\
  \hline
\end{array}
\]
\end{table}\\
\end{proof}

This completes the proof of Theorem \ref{dpmain}.
%%%%%%%%%%%%%%%%%%%%%%%%%%%%%%%%%%%%%%%%%%%%%%%%%%%%%%%%%%%%%%%%%%%%%%%%
%\bibliographystyle{amsplain}


\begin{thebibliography}{99}
\bibitem[Be]{Belousov} G. N. Belousov, \textit{Del Pezzo surfaces with log terminal singularities}, Math. Notes \textbf{83} (2008), no. 2, 152-161.


\bibitem[Br]{Brieskorn} E. Brieskorn, \textit{Rationale Singularit\"aten komplexer Fl\"achen}, Invent. Math. \textbf{4} (1968), 336-358.


\bibitem[FS85]{FS85} R. Fintushel and R. Stern, \textit{Pseudofree
orbifolds}, Ann. of Math. (2) \textbf{122} (1985), no. 2, 335-364.

\bibitem[FS87]{FS87} R. Fintushel and R. Stern,  \textit{O(2) actions on the
5-sphere}, Invent. Math. \textbf{87} (1987), no. 3, 457-476.

\bibitem[GZ]{GZ} R. V. Gurjar and D. Q. Zhang, \textit{$\pi_1$ of smooth points of a log del Pezzo surface is finite: I}, J. Math. Sci. Univ. Tokyo \textbf{1} (1994), 137-180.

\bibitem[HK1]{HK1} D. Hwang and J. Keum, \textit{The maximum number of singular points on rational homology projective planes}, arXiv:0801.3021, to appear in J. Algebraic Geom.

\bibitem[HK2]{HK2} D. Hwang and J. Keum, \textit{Algebraic Montgomery-Yang Problem: the noncyclic case}, arXiv:0904.2975, to appear in Math. Ann.

\bibitem[HK3]{HK3} D. Hwang and J. Keum, \textit{Construction of singular rational surfaces of Picard number one with ample canonical divisor}, arXiv:1007.1936.

\bibitem[Ke07]{Keum} J. Keum, \textit{A rationality criterion for projective surfaces - partial solution to Kollar's conjecture},
Algebraic geometry, 75-87, Contemp. Math., \textbf{422}, Amer. Math. Soc., Providence, RI, 2007.

\bibitem[Ke08]{K08} J. Keum, \textit{Quotients of Fake Projective Planes}, Geom. \& Top. \textbf{12} (2008), 2497-2515.

\bibitem[Keu10]{K10} J. Keum, \textit{The moduli space of $\mathbb{Q}$-homology projective planes with 5 quotient singular points}, Acta Math. Vietnamica \textbf{35} (2010), 79-89.

\bibitem[KM]{KM} S. Keel and J. McKernan, \textit{Rational curves on quasi-projective surfaces}, Mem. Amer. Math. Soc. \textbf{140} (1999), no. 669.

\bibitem[KNS]{KNS} R. Kobayashi, S. Nakamura, and F. Sakai \textit{A
numerical characterization of ball quotients for normal surfaces
with branch loci}, Proc. Japan Acad. Ser. A,  Math. Sci.
\textbf{65} (1989), no. 7, 238-241.

\bibitem[Kol05]{Kol05} J. Koll\'ar, \textit{Einstein metrics on $5$--dimensional Seifert bundles}, Jour. Geom. Anal.
\textbf{15} (2005), no. 3, 463-495.

\bibitem[Kol08]{Kol08} J. Koll\'ar, \textit{Is there a topological Bogomolov-Miyaoka-Yau inequality?}, Pure Appl. Math. Q. (Fedor Bogomolov special issue, part I), \textbf{4} (2008), no. 2,  203--236.

\bibitem[LW]{LW} E. Looijenga, and J. Wahl, \textit{Quadratic functions and smoothing surface singularities},
Topology \textbf{25} (1986), no.3, 261-291.

%\bibitem[Man]{Manetti} M. Manetti, \textit{Normal degenerations of the complex projective plane}, J. Reine Angew. Math. \textbf{419} (1991), 89-118.

%\bibitem[Ma]{Matsuki} K. Matsuki, \textit{Introduction to the Mori program}, Universitext. Springer-Verlag, New York, 2002.

\bibitem[Me]{Megyesi} G. Megyesi, \textit{Generalisation of the Bogomolov-Miyaoka-Yau inequality to singular surfaces},
Proc. London Math. Soc. (3) \textbf{78} (1999), 241-282.

%\bibitem {Miyanishi} M. Miyanishi, \textit{Open Algebraic Surfaces}, CRM Monograph Series, \textbf{12} American Math. Soc. 2001.

\bibitem[Mi]{Miyaoka} Y. Miyaoka, \textit{The maximal number of quotient singularities on surfaces with given numerical invariants},
Math. Ann. \textbf{268} (1984), 159-171.

\bibitem[MY]{MY} D. Montgomery and C. T. Yang, \textit{Differentiable pseudo-free circle
actions on homotopy seven spheres}, Proc. of the Second Conference
on Compact Transformation Groups (Univ. Massachusetts, Amherst,
Mass., 1971), Part I, Springer, 1972, pp. 41-101. Lecture Notes in
Math., Vol. 298.

\bibitem[P]{Pet} T. Petrie, \textit{Equivariant quasi-equivalence, transversality, and normal cobordism}, Proc. Int. Cong. Math., Vancouver, 1974, 537-541.

%\bibitem {PY} G. Prasad, S.-K. Yeung, \textit{Fake projective planes}, Invent. Math. \textbf{168} (2007), no. 2, 321--370.

%\bibitem[R]{Rie77} O. Riemenschneider,  \textit{Die Invarianten der endlichen Untergruppen von GL(2, $\mathbb{C}$)}, Math. Zeit. \textbf{153} (1977), 37-50.

\bibitem[S]{Sakai} F. Sakai, \textit{Semistable curves on algebraic surfaces and logarithmic pluricanonical maps}, Math. Ann. \textbf{254} (1980), no. 2, 89-120.

\bibitem[Se]{Sei} H. Seifert, \textit{Topologie dreidimensionaler gefaserter R\"aume}, Acta Math. \textbf{60} (1932), 147-238.

\bibitem[Z]{Zhang} D. Q. Zhang, \textit{Logarithmic del Pezzo surfaces of rank one with contractible boundaries}, Osaka J. Math. \textbf{25} (1988), 461-497.


\end{thebibliography}
\end{document}